\numberwithin{equation}{section}
\theoremstyle{definition}
\newtheorem{defi}{Definition}[section]
\newtheorem{ex}[defi]{Example}
\newtheorem{rem}[defi]{Remark}
\newtheorem{notation}[defi]{Notation}
\newtheorem*{ack}{Acknowledgements}
\newtheorem{cond}{Condition}
\theoremstyle{plain}
\newtheorem{thm}[defi]{Theorem}
\newtheorem{prop}[defi]{Proposition}
\newtheorem{lem}[defi]{Lemma}
\newtheorem{cor}[defi]{Corollary}
\newcommand{\thh}{\mathsf{THH}}
\newcommand{\cothh}{\mathsf{coTHH}}
\newcommand{\cohh}{\mathsf{coHH}}
\newcommand{\Sp}{\mathbb{S}}
\newcommand{\sph}{\mathbb{S}}
\newcommand{\ei}{{\mathbb{E}_\infty}}
\newcommand{\ee}{{\mathbb{A}_\infty}}
\newcommand{\ot}{\otimes}
\newcommand{\op}{{^\mathsf{op}}}
\newcommand{\Fin}{\mathsf{Fin}}
\newcommand{\rom}[1]{\uppercase\expandafter{\romannumeral #1\relax}}
\newcommand{\Funlax}{\mathsf{Fun}^\mathsf{lax}}
\newcommand{\Funcolax}{\mathsf{Fun}^\mathsf{colax}}
\newcommand{\C}{\mathcal{C}}
\newcommand{\D}{\mathcal{D}}
\newcommand{\Co}{{\mathcal{C}^\ot}}
\renewcommand{\O}{\mathcal{O}}
\newcommand{\Oo}{{\mathcal{O}^\ot}}
\newcommand{\alg}{\mathsf{Alg}}
\newcommand{\coalg}{\mathsf{CoAlg}}
\newcommand{\Mod}{\mathsf{Mod}}
\newcommand{\THH}{\mathsf{THH}}
\newcommand{\coTHH}{\mathsf{coTHH}}
\newcommand{\HH}{\mathsf{HH}}
\newcommand{\Der}{\mathsf{Der}}
\newcommand{\Tor}{\mathsf{Tor}}
\newcommand{\Ext}{\mathsf{Ext}}
\newcommand{\Hom}{\mathsf{Hom}}
\newcommand{\Fun}{\mathsf{Fun}}
\newcommand{\assoc}{\mathsf{Assoc}}
\newcommand{\com}{\mathsf{Com}}
\newcommand{\XXX}{weak quasi}
\newcommand{\dual}{\mathsf{fd}}
\newcommand{\qdual}{\star}
\newcommand{\xdual}{\circ}
\newcommand{\id}{\mathsf{id}}
\renewcommand{\bar}{\mathsf{Bar}_\bullet}
\newcommand{\cobar}{\mathsf{coBar}^\bullet}
\newcommand{\M}{\mathsf{M}}
\newcommand{\MM}{{N(\M_c)\left[\Wo\right]}}
\renewcommand{\sp}{\mathsf{Sp}}
\newcommand{\W}{\mathsf{W}}
\newcommand{\Wo}{\mathsf{W}^{-1}}
\newcommand{\Ww}{{\mathsf{W}'}}
\newcommand{\Wwo}{{{\mathsf{W}'}^{-1}}}
\newcommand{\RomanNumeralCaps}[1]
    {\MakeUppercase{\romannumeral #1}}
\newcommand{\hfp}{H\mathbb{F}_p}
\newcommand{\I}{\mathbb{I}}
\newcommand{\z}{\mathbb{Z}}
\newcommand{\fp}{\mathbb{F}_p}
\newcommand{\hz}{H\mathbb{Z}}
\newcommand{\lv}{\lvert}
\newcommand{\rv}{\rvert}
\newcommand{\lambdafp}{\Lambda_{\mathbb{F}_p}}
\newcommand{\wdg}{\wedge}
\newcommand{\wdgz}{\wedge_{H\mathbb{Z}}}
\long\def\cyantext#1{{\color{cyan}#1}}
\long\def\emptytext#1{}
\def\co{\colon\thinspace}
\title[Spanier-Whitehead duality for coTHH]{Spanier-Whitehead duality for Topological coHochschild homology}
\author{Haldun Özgür Bayındır}
\address{Department of Mathematics, City, University of London, Northampton Square, London EC1V 0HB, UK}
\email{ozgurbayindir@gmail.com}
 \author{Maximilien P\'eroux}
\address{Department of Mathematics, University of Pennsylvania,
209 South 33rd Street,
Philadelphia, PA, 19104-6395, USA}
\email{mperoux@sas.upenn.edu}
\subjclass[2020]{Primary: 55P25, 55P43, 16T15, 18N70. Secondary: 55S10, 16E40}
\keywords{Topological Hochschild homology, topological coHochschild homology, coalgebra spectra, duality, $\infty$-category}
\begin{document}



\begin{abstract} 

In this work, we compute the topological coHochschild homology (coTHH) of interesting coalgebras such as the Steenrod algebra spectrum. For this, we  start by extending the Hess-Shipley definition of coTHH  to $\infty$-categories, following the Nikolaus-Scholze approach to THH. Furthermore, we prove that coTHH of what we call quasi-proper coalgebras can be obtained from THH via Spanier-Whitehead duality which provides further insight into coTHH and its relationship to THH. 
\end{abstract}

\maketitle

\section{Introduction}

\subsection*{Background} 

Topological Hochschild homology (THH) for ring spectra extends the notion of Hochschild homology for algebras. The topological persepctive provides new insight on rings.
For instance, the Hochschild homology of the finite field $\mathbb{F}_p$ is a divided power algebra, but  B\"okstedt shows that the topological Hochschild homology of its Eilenberg-Mac Lane spectrum $H\mathbb{F}_p$ is a polynomial ring. More recent examples can be found in the work of \cite{bms}.
THH plays also an important role as an invariant for algebras in spectra thanks to its connections with $K$-theory via the Dennis trace map. Furthermore, THH is of interest for string topology. For a connected topological space $X$, there is an equivalence
\[\thh(\Sigma^\infty_+ \Omega X) \simeq \Sigma^\infty_+ \mathscr{L}X,\]
where $\mathscr{L}X$ denotes the free loop space on $X$. This goes back to \cite[Theorem 7.3.11]{loday1998cyclichomology}; see  \cite[IV.3.2, IV.3.3]{tch} for a modern reference.

Topological coHochschild homology (coTHH) is a homology theory for coalgebras in spectra introduced by Hess-Shipley in \cite{HScothh}, analogous to THH for ring spectra. 
Furthermore, coTHH extends the notion of  coHochschild homology (coHH) introduced in \cite{doi} and \cite{cohh} to coalgebras in a symmetric monoidal model category \cite[2.3]{toolscothh}. 
One reason why this invariant of coalgebras is important  is because  coalgebraic structures are shown to contain valuable information regarding topological spaces. For instance, \cite{rivera} establish that homotopy type of a topological space is entirely captured by the coalgebraic structure of its singular chain complex. Furthermore, Hess and Shipley prove that Waldhausen $K$-theory of a topological space $X$ can be obtained using the category of comodules over the coalgebra $\Sigma^\infty_+ X$ \cite{comodHS}. 

Just like THH, topological coHochschild homology also provides a new model for free loop spaces. Namely, Hess and Shipley show that there is an equivalence
\[\cothh(\Sigma^{\infty}_+ X) \simeq \Sigma^\infty_+ \mathscr{L}X\]
for every ``EMSS-good space" $X$; in particular, for every simply connected space $X$  \cite{HScothh}.
Connecting with the result above, for every EMSS-good space $X$, one obtains an instance of Koszul duality between THH and coTHH:
\[
\thh(\Sigma^\infty_+ \Omega X) \simeq \cothh(\Sigma^\infty_+ X).
\]
In \cite[2.3]{toolscothh}, the authors construct a coB\"okstedt spectral sequence for coTHH and show that this spectral sequence has a ``$\square$-Hopf algebra structure", i.e.\ it is endowed with a comultiplicative structure that is compatible with its multiplicative structure. The $E_2$-page of this spectral sequence is computed for various interesting coalgebra spectra in \cite{sarah}. In \cite{bohmann2022topological}, Bohmann, Gerhardt and Shipley use the $\square$-Hopf algebra structure on the coB\"okstedt spectral sequence and the equivalence above to obtain homology computations for various free loop spaces  generalizing earlier computations of \cite{kuribayashi1997cohomologyoffreeloopspaces}. 
Recently, Ayala and Francis defined  factorization cohomology which generalizes coTHH for $\mathbb{E}_n$-coalgebras to obtain a Poincar\'e duality result for factorization homology \cite{zeromani}. 
From work in progress, the second author and Klanderman provide a shadow structure in the sense of \cite{ponto2008fixed} for coHH, resulting in new interesting bicategorical traces for coHH \cite{tracecohh}.

\subsection*{Computations}
In this work, we compute the topological coHochschild homology of new coalgebras not considered before. For this, we carry the definition of coTHH to the $\infty$-categorical setting and we prove a duality relationship between coTHH and THH which provides further insight to coTHH, and also to THH.

Our main computation is the  coHochschild homology groups of the Steenrod algebra spectrum ${[\hfp,\hfp]}_{\sph}$. Here ${[\hfp,\hfp]}_{\sph}$ denotes the spectrum of $\sph$-module endomorphisms of $\hfp$. Firstly, we show that ${[\hfp,\hfp]}_{\sph}$ inherits the structure of an $\ei$-$\hfp$-coalgebra, i.e.\ an $\ei$-coalgebra in $\hfp$-modules, as it is the linear dual of the $\ei$-$\hfp$-algebra $\hfp \wdg \hfp$. Note that this does not follow by the correspondence  between $\ei$-algebras and $\ei$-coalgebras on dualizable objects (Proposition \ref{prop: antiequivalence between dualizable objects}), given by the dualization functor. This is because ${[\hfp,\hfp]}_{\sph}$ is not dualizable in $\hfp$-modules as the Steenrod algebra 
\[\mathcal{A}\cong \pi_*\left({[\hfp,\hfp]}_{\sph}\right)\]
is infinite dimensional, see Example \ref{ex dualizable in hk modules}. On the other hand, our   Theorem \ref{thm equivalence of connective coalgebras and coconnective coalgebras in chains}, which generalizes the correspondence  on dualizable objects, does equip ${[\hfp,\hfp]}_{\sph}$ with the structure of an $\ei$-$\hfp$-coalgebra coming  from the $\ei$-$\hfp$-algebra structure of $\hfp \wdg \hfp$. Using  our new duality result for coTHH (Theorem \ref{thm cothh duality in chains}), together with B\"okstedt periodicity, we compute the coHochschild homology groups of ${[\hfp,\hfp]}_{\sph}$. 

\begin{thm}[Theorem \ref{thm: cothh of steenrod algebra}]
There is an equivalence of graded $\fp$-modules:
\[
\pi_*\left(\cothh^{\hfp}\left({[\hfp,\hfp]}_{\sph}\right)\right) \cong \mathcal{A} \otimes \fp[x_{-2}],\]
where $\lv x_{-2}\rv = -2$.
\end{thm}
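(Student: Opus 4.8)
The plan is to reduce the computation to a relative $\thh$-calculation by means of the duality of Theorem \ref{thm cothh duality in chains}, and then to dualize. First I would check the hypotheses of that theorem with $A=\fp$: the field $\fp$ is discrete, commutative, Noetherian and of global dimension $0$, hence of finite global dimension; moreover ${[\hfp,\hfp]}_{\sph}$ is a coconnective $\ee$-coalgebra (indeed $\ei$) over $\hfp$ by Theorem \ref{thm equivalence of connective coalgebras and coconnective coalgebras in chains}, being the $\hfp$-linear dual of the $\ei$-$\hfp$-algebra $\hfp\wdg\hfp$, and it is of finite type because the Steenrod algebra is finite dimensional in each degree. Applying Theorem \ref{thm cothh duality in chains} then gives
\[
\cothh^{\hfp}\!\left({[\hfp,\hfp]}_{\sph}\right) \simeq \left(\thh^{\hfp}\!\left(\left({[\hfp,\hfp]}_{\sph}\right)^\vee\right)\right)^\vee .
\]

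Next I would identify the algebra appearing on the right. Since ${[\hfp,\hfp]}_{\sph}\simeq(\hfp\wdg\hfp)^\vee$ as $\hfp$-modules and $\hfp\wdg\hfp$ is connective of finite type, the canonical double-dual map $\hfp\wdg\hfp\to(\hfp\wdg\hfp)^{\vee\vee}$ is an equivalence (there are no derived contributions over the field $\fp$), so $\left({[\hfp,\hfp]}_{\sph}\right)^\vee\simeq\hfp\wdg\hfp$ as $\ee$-algebras. The right-hand side therefore becomes $\left(\thh^{\hfp}(\hfp\wdg\hfp)\right)^\vee$, and I am reduced to computing the relative topological Hochschild homology $\thh^{\hfp}(\hfp\wdg\hfp)$ and then taking its $\hfp$-linear dual.

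For the relative $\thh$ I would invoke base change along $\sph\to\hfp$: since $\thh$ is computed by tensoring with $S^1$ and base change preserves this colimit, one has $\thh^{\hfp}(\hfp\wdg_{\sph}\hfp)\simeq\hfp\wdg_{\sph}\thh^{\sph}(\hfp)$. By B\"okstedt periodicity $\pi_*\thh^{\sph}(\hfp)\cong\fp[\mu]$ with $|\mu|=2$, and because the dual Steenrod algebra $\mathcal{A}_*=\pi_*(\hfp\wdg\hfp)$ and $\fp[\mu]$ are free over $\fp$, a Künneth argument (with vanishing $\Tor$) yields $\pi_*\thh^{\hfp}(\hfp\wdg\hfp)\cong\mathcal{A}_*\ot_{\fp}\fp[\mu]$.

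Finally I would dualize degreewise. Over the field $\fp$ there are no higher terms, so $\pi_n(M^\vee)\cong\Hom_{\fp}(\pi_{-n}M,\fp)$ for any finite-type $\hfp$-module $M$; applied to $M=\thh^{\hfp}(\hfp\wdg\hfp)$, this reduces the claim to an identification of graded $\fp$-vector spaces, for which it suffices to match Poincaré series in each degree. Using that the graded $\fp$-dual of $\mathcal{A}_*$ is the Steenrod algebra $\mathcal{A}$ (a degree-$d$ operation sitting in homotopical degree $-d$) and that the dual of $\fp[\mu]$ is one dimensional in each even non-positive degree, hence abstractly $\fp[x_{-2}]$ with $|x_{-2}|=-2$, gives the stated equivalence $\pi_*\cothh^{\hfp}({[\hfp,\hfp]}_{\sph})\cong\mathcal{A}\ot\fp[x_{-2}]$. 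The genuinely substantive inputs I expect to be B\"okstedt periodicity, invoked as a known deep theorem, together with the careful identification of $\left({[\hfp,\hfp]}_{\sph}\right)^\vee$ with $\hfp\wdg\hfp$ as algebras under the finite-type coconnective hypotheses; the remaining grading and Künneth bookkeeping, while needing care to land precisely on $\mathcal{A}\ot\fp[x_{-2}]$, is routine.
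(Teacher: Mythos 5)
Your proposal is correct and follows essentially the same route as the paper: it invokes Theorem \ref{thm cothh duality in chains}, identifies $\left({[\hfp,\hfp]}_{\sph}\right)^\vee\simeq\hfp\wedge\hfp$ via the double-dual equivalence (the paper cites Theorem \ref{thm equivalence of connective coalgebras and coconnective coalgebras in chains} for this), computes $\thh^{\hfp}(\hfp\wedge\hfp)\simeq\hfp\wedge\thh(\hfp)$ by base change together with B\"okstedt periodicity, and then dualizes degreewise using that everything is finite type over $\fp$. The only difference is cosmetic: you verify the Noetherian/finite-global-dimension/coconnectivity hypotheses a bit more explicitly than the paper does.
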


Moreover, using Theorem \ref{thm cothh duality in chains}, we do the following computations.

\begin{itemize}
    \item Theorem \ref{thm cothh of dsa in chains}: there is an $\ei$-coalgebra structure on $\hfp \wdgz \hfp$ as an $\hfp$-module, and its coHochschild homology is given by:
  \[\pi_*\left(\cothh^{\hfp}(\hfp \wdgz \hfp)\right)\cong \lambdafp(x_1) \otimes \fp[[t]],\]
where $\lv t \rv = 0$ and $\lv x_1 \rv = 1$. This completes the result of \cite{sarah} in which the $E_2$-page of the relative coB\"okstedt spectral sequence from \cite{toolscothh} computing the homotopy groups above was calculated.

\item Theorem \ref{thm: loop-hfp}: there is an $\ee$-coalgebra structure on $\Omega\hfp$ as an $H\z$-module and its topological coHochschild homology is given by:
\[\pi_*\left(\cothh^{H\z}(\Omega H\fp)\right)\cong \Omega \fp[x_{-2}],\]
where $\lv x_{-2} \rv= -2$ and $\Omega$ on the right hand side denotes the functor that decreases the grading by 1.
\end{itemize}

\subsection*{From model categories to higher categories}
We mentioned earlier the importance of coalgebraic structures in spectra. However, there is no model category of spectra that accurately captures the homotopy theory of its coalgebras, see \cite{perouxshipley} and \cite[5.6]{perouxDKloc}. 
An $\ee$-coalgebra in spectra is a spectrum endowed with a comultiplication that is coassociative and counital up to higher homotopy. 
Given an $\ee$-coalgebra, there is not, in general, a choice of monoidal model category of spectra in which the $\ee$-coalgebra structure can be rigidified to a strictly coassociative counital coalgebra structure.
The major disadvantage of model categories is that the Spanier-Whitehead duality, that relates $\ee$-algebras with $\ee$-coalgebras in finite spectra (see \cite[3.2.5]{lurie2}), cannot be realized at the level of strict algebras and coalgebras. Essentially, model categories currently can only capture coalgebras of the form $\Sigma^\infty_+ X$ induced by the diagonal on the space $X$. It is for this very reason that the work of \cite{HScothh}, \cite{toolscothh}, and \cite{sarah} is limited to the computation of $\cothh(\Sigma^\infty_+ X)$.

 If a symmetric monoidal $\infty$-category $\C$ is endowed with an internal hom $[-,-]$, we can define the notion of linear duality that generalizes the classical dual of a vector space over a field, or the Spanier-Whitehead dual of a spectrum. For $X$ an object in $\C$ we denote $X^\vee=[X, \I]$ the linear dual of $X$, where $\I$ denotes the monoidal unit of $\C$.
If $C$ is a coalgebra in $\C$, then $C^\vee$ is always an algebra in $\C$.
It is well-known that algebras and coalgebras in finite dimensional vector spaces are anti-equivalent, see \cite[I.1.1]{sweedler}. Similarly, by \cite[3.2.5]{lurie2} (see also Corollary \ref{cor: antiequivalence proper (co)algebras} below), the linear dual provides an anti-equivalence between $\ee$-algebras and $\ee$-coalgebras in dualizable objects in $\C$. 

 To be able to incorporate Spanier-Whitehead duality into our computational framework, we extend the definition of coTHH to any symmetric monoidal $\infty$-category $\C$, following the THH approach of \cite{tch}. Indeed, our computations provide an instance demonstrating the strength of $\infty$-categories for computational purposes.

\subsection*{A duality relationship between coTHH and THH} Using the anti-equiva\-lence between $\ee$-algebras and $\ee$-coalgebras in dualizable objects mentioned above, and our  $\infty$-categorical definition of coTHH, we prove the following theorem that relates THH and coTHH of algebras and coalgebras in dualizable objects via the linear dual. 

\begin{thm}[{Theorem \ref{thm most general thh cothh duality}}] \label{thm cothh duality general}
Let $R$ be an $\ei$-ring spectrum.
If $C$ is an $\ee$-coalgebra over $R$, whose underlying $R$-module is dualizable, then there is an equivalence of $R$-module spectra
\[\cothh^{R}(C) \simeq \left(\thh^R(C^\vee)\right)^\vee.\]
\end{thm}

We in fact generalize the anti-equivalence on dualizable objects to larger subclasses of algebras and coalgebras where the dualization functor is strong monoidal, see Theorem \ref{thm: anti-equivalence between quasi-proper (co)algebras}. We also extend the duality relationship  between THH and coTHH to these new subclasses. The main ingredient for these generalizations is our notions of \emph{quasi-dualizability} (Definition \ref{def: quasi-dualizable}) and \emph{quasi-proper} coalgebras (Definition \ref{def quasi proper}) which are weaker conditions than dualizability, see Remark \ref{rem: different notions of dualizable}. We show in Theorem \ref{thm most general thh cothh duality} that the equivalence in Theorem \ref{thm cothh duality general} also holds for quasi-proper $\ee$-coalgebras. 


With specific examples in mind, such as the Steenrod algebra spectrum mentioned earlier, we are led to consider a more general class of $HA$-coalgebras than those whose underlying $HA$-modules are  dualizable. Here $A$ denotes a discrete commutative ring.
 Through careful Tor and Ext spectral sequence considerations, we are able to show that these  $HA$-coalgebras are indeed quasi-proper, see Section \ref{sec proof of cothh thh duality in chains}. This provides Theorem \ref{thm cothh duality in chains} below, which extends Theorem \ref{thm cothh duality general} for $HA$-coalgebras. Indeed, this generalization turns out to be very useful for computations. For instance, our computation of the coTHH of the Steenrod algebra spectrum relies on this generalization.

We say an $HA$-module $X$ is of \emph{finite type} if $\pi_iX$ is a finitely generated $A$-module for each $i$. Recall that a ring $A$ is said to have finite global dimension if there is an integer $d$ such that  every $A$-module $M$ admits a projective resolution of length at most $d$.




\begin{thm}[{Theorem \ref{thm restatement of cothh thh duality in chains}}]\label{thm cothh duality in chains}
Let $A$ be a discrete commutative Noetherian ring with finite global dimension and let $C$ be a  connective or coconnective $\ee$-coalgebra over $HA$ of finite type. Then there is an equivalence of $HA$-module spectra
\[\cothh^{HA}(C) \simeq \left(\thh^{HA}(C^\vee)\right)^\vee.\]
\end{thm}
We obtain the following new insight on THH and coTHH.
\begin{itemize}
   
\item Example \ref{ex: hess-shipley}: The suspension spectrum of a finite CW-complex $X$ is a dualizable object in spectra. When $X$ is simply connected, one has
\[
\left(\thh((\Sigma^\infty_+ X)^\vee)\right)^\vee\simeq \Sigma^\infty_+ \mathscr{L}X,
\]
due to \cite{kuhn} and \cite{cary}. We obtain a new proof of this equality using Theorem \ref{thm cothh duality general} and the results of \cite{HScothh}. Indeed, this generalizes the equality above to EMSS-good finite CW-complexes. Furthermore, this provides the following Koszul duality relationship for THH:
\[\left(\thh((\Sigma^\infty_+ X)^\vee)\right)^\vee \simeq \thh(\Sigma^\infty_+ \Omega X).\]

\item Example \ref{ex: compact Lie group}: if $G$ is a compact Lie group, then the Thom spectrum $G^{-\tau}$ of its stable normal bundle is endowed with an $\ee$-coalgebra structure in spectra, induced by the group multiplication of $G$, and its coTHH is given by:
\[
\cothh(G^{-\tau})\simeq \left(\Sigma^\infty_+ \mathscr{L}BG\right)^\vee, 
\]
where $\mathscr{L} BG$ is the free loop space of the classifying space of $G$. By \cite{perouxDKloc}, such a computation could not have been considered in the framework of \cite{HScothh}.

\end{itemize}
\subsection*{Outline} In Section \ref{sec coTHH definition}, we define coTHH in general symmetric monoidal  $\infty$-cate\-gories. After this, we discuss duality between coalgebras and algebras and define our notions of quasi-dualizability and quasi-properness in Section \ref{sec general duality between algebras and coalgebras}. Using this, we prove general duality results between coTHH and THH in Section \ref{sec general duality between thh and cothh}. In Section \ref{sec proof of cothh thh duality in chains}, we apply these duality results to $HA$-module spectra and prove Theorems \ref{thm cothh duality in chains} and \ref{thm equivalence of connective coalgebras and coconnective coalgebras in chains}. Section \ref{sec examples of coalgebras in spectra} is devoted to a discussion of examples of $\ee$-coalgebras in spectra; we show that many examples of spectra such as $ku$, $MU$ and $ko$ etc.\ are not $\ee$-coalgebras in spectra. In Section \ref{sec computations in ha modules}, we do  explicit coHochschild homology computations using Theorem \ref{thm cothh duality in chains}. The arguments in this section only make use of Theorems \ref{thm cothh duality in chains} and \ref{thm equivalence of connective coalgebras and coconnective coalgebras in chains}; therefore, the reader interested in these computations may jump to Section \ref{sec computations in ha modules}. 

\begin{ack} We thank Sarah Klanderman for initial conversations with her that sparked some ideas in this project. {We are grateful to Elden Elmanto and Thomas Nikolaus for their careful reading of this work.} We also thank Rune Haugseng and Maxime Ramzi for helpful conversations. The first author acknowledges support from the project ANR-16-CE40-0003 ChroK. 
\end{ack}

\begin{notation} We begin by setting notation that we  use throughout and recalling some elementary notions of the theory of $\infty$-categories, following \cite{htt, HA}. The notions of \emph{symmetric monoidal $\infty$-categories} and \emph{$\infty$-operads} are defined respec\-tively in \cite[2.0.0.7, 2.1.1.10]{HA}.

\begin{enumerate}
\item We denote by $\assoc^\ot$ the \emph{associative $\infty$-operad} as in \cite[4.1.1.3]{HA}. If $\C$ is a monoidal $\infty$-category, we denote the $\infty$-category of $\ee$-algebras in $\C$, as described in \cite[4.1.1.6]{HA}, by $\alg_\ee(\C)$. Recall there is an approximation of $\infty$-operads $N(\Delta\op)\rightarrow \assoc^\ot$, see \cite[4.1.2.11]{HA}. Here $\Delta$ denotes the usual simplex category and $N$ is the nerve of a category.

\item We denote by $\com^\ot=N(\Fin_*)$ the \emph{commutative $\infty$-operad} as in \cite[2.1.1.18]{HA}. Here $\Fin_*$ is the category of finite pointed sets. If $\C$ is a symmetric monoidal $\infty$-category, we denote the $\infty$-category of $\ei$-algebras in $\C$, as described in \cite[2.1.3.1]{HA}, by $\alg_{\ei}(\C).$

\item Let $\O$  be an $\infty$-operad. Let $\C$ and $\D$ be $\O$-monoidal $\infty$-categories, as defined in \cite[2.1.2.13]{HA}. We say a functor $F:\C\rightarrow \D$ is \emph{lax $\O$-monoidal} if it is a map of $\infty$-operads in the sense of \cite[2.1.2.13]{HA}.
We denote the $\infty$-category of lax $\O$-monoidal functor from $\C$ to $\D$ by $\Funlax_\O(\C, \D)$. 
We say $F$ is \emph{strong $\O$-monoidal} if it is $\O$-monoidal in the sense of \cite[2.1.3.7]{HA}. 
When $\O^\ot=\com^\ot$, we prefer to say \emph{symmetric} monoidal instead of $\O$-monoidal. 

\item Given a symmetric monoidal $\infty$-category $\C$ and an $\ei$-algebra $A$ in $\C$, we denote by $\Mod_A(\C)$ the $\infty$-category of \emph{left} modules over $A$ in $\C$ as in \cite[4.2.1.13]{HA}. As $A$ is an $\ei$-algebra, the $\infty$-category is also equivalent to the $\infty$-category of right modules over $A$ in $\C$ by \cite[4.5.1.6]{HA}.

\item We denote the $\infty$-category of spectra by $\sp$ as in \cite[1.4.3.1]{HA}. By a \emph{commutative ring spectrum} we mean an $\ei$-algebra in $\sp$. We denote by $\sph$ the \emph{sphere spectrum}. If $A$ is a discrete commutative ring, we denote by $HA$ its associated \emph{Eilenberg-Mac Lane spectrum}.

\item Given a commutative ring spectrum $R$, by an \emph{$R$-algebra} we mean an $\ee$-algebra in $\Mod_R(\sp)$. By a \emph{commutative $R$-algebra}, we mean an $\ei$-algebra in $\Mod_R(\sp)$. See also Definition \ref{defi: R-coalgebras in spectra} below for the dual case of coalgebras.
\item For a discrete commutative ring $A$, an $A$-differential graded algebra is a monoid object in the symmetric monoidal category of chain complexes in $A$-modules. We call these $A$-DGAs. 
\end{enumerate}
\end{notation}

\section{Topological CoHochschild Homology}\label{sec coTHH definition}

We provide here, in Definition \ref{defi: cothh}, the notion of topological coHochschild homology for an $\ee$-coalgebra in a general symmetric monoidal $\infty$-category. We show that it is equivalent to the model categorical approach of \cite{HScothh} and \cite{toolscothh} in Proposition \ref{Prop: def of cothh agrees}. 

Cocommutative coalgebras in $\infty$-categories were introduced in \cite{lurie2}, and a more general definition of coalgebras was given in \cite{coalgenr}.
Essentially, coalgebras are algebras in the opposite category. If $\O$ is an $\infty$-operad, and $\C$ is {an} $\O$-monoidal $\infty$-category, as in \cite[2.1.1.18]{HA}, then we need to describe the $\O$-monoidal structure on its opposite category $\C\op$. The case $\O^\ot=\com^\ot$ is done in \cite[2.4.2.7]{HA}, and it can be generalized using \cite{dualcocart}.

\begin{defi}[{\cite[2.1]{coalgenr}}]\label{defi: coalgebras}
Let $\O$ be an $\infty$-operad. Let $\C$ be an $\O$-monoidal $\infty$-category. The $\infty$-category of $\O$-coalgebras in $\C$ is defined as:
\[
\coalg_\O(\C)=\left(\alg_\O(\C\op)\right)\op.
\]
We shall mostly be interested in the case where $\Oo=\assoc^\ot$ or $\Oo=\com^\ot$. 
\end{defi}

\begin{rem}
By \cite[2.3]{coalgenr}, if $\C$ is the nerve of a symmetric monoidal (ordinary) category $\M$, then the $\infty$-category $\coalg_\ee(\C)$ corresponds precisely to the nerve of the category of (strictly) coassociative counital coalgebras in $\M$. Similarly, the $\infty$- category $\coalg_\ei(\C)$ corresponds precisely to the nerve of the category of (strictly) cocommutative counital coalgebras in $\M$.
\end{rem}

\begin{defi}\label{defi: R-coalgebras in spectra}
Let $R$ be a commutative ring spectrum. An \emph{$R$-coalgebra} is an $\ee$-coalgebra in $\Mod_R(\sp)$. A \emph{cocommutative $R$-coalgebra} is an $\ei$-coalgebra in $\Mod_R(\sp)$.
\end{defi}

Recall that an $\O$-algebra in $\C$ is simply a lax $\O$-monoidal functor $\O\rightarrow \C$. In other words, we have the equivalence $\alg_\O(\C)\simeq\Funlax_\O(\O, \C)$. We have a dual characterization for coalgebras. 
\begin{defi}
Let $\O$ be an $\infty$-operad. Let $\C$ and $\D$ be $\O$-monoidal $\infty$-categories. We say that $F:\C\rightarrow \D$ is \emph{colax $\O$-monoidal} (sometimes also called oplax $\O$-monoidal) if its opposite $F\op:\C\op\rightarrow \D\op$ is lax $\O$-monoidal. We denote the category of colax $\O$-monoidal functor by $\Funcolax_\O(\C, \D)=(\Funlax_\O(\C\op, \D\op))\op$.
\end{defi}

\begin{prop}\label{prop: colax=coalgebra}
Let $\O$ be an $\infty$-operad. Let $\C$ be an $\O$-monoidal category. Then we have an equivalence:
\[
\coalg_\O(\C)\simeq \Funcolax_\O(\O, \C).
\]
\end{prop}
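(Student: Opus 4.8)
The plan is to reduce the proposition to the corresponding fact about lax monoidal functors by systematically applying the opposite-category construction. Recall from Definition \ref{defi: coalgebras} that $\coalg_\O(\C) = (\alg_\O(\C\op))\op$, and recall the standard identification $\alg_\O(\D) \simeq \Funlax_\O(\O, \D)$ for any $\O$-monoidal $\infty$-category $\D$, which holds because an $\O$-algebra is by definition a map of $\infty$-operads $\O \to \D$, i.e.\ a lax $\O$-monoidal functor. The strategy is to take $\D = \C\op$ in this identification and then apply $(-)\op$ to both sides.

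First I would write $\coalg_\O(\C) = (\alg_\O(\C\op))\op$ using Definition \ref{defi: coalgebras}. Then I would apply the equivalence $\alg_\O(\C\op) \simeq \Funlax_\O(\O, \C\op)$, valid since $\C\op$ carries an $\O$-monoidal structure (the construction of this dual structure is precisely what was recalled before Definition \ref{defi: coalgebras}, citing \cite[2.4.2.7]{HA} and \cite{dualcocart}). This yields
\[
\coalg_\O(\C) \simeq \left(\Funlax_\O(\O, \C\op)\right)\op.
\]
The final step is to recognize the right-hand side as $\Funcolax_\O(\O, \C)$. By the Definition of colax monoidal functors, $\Funcolax_\O(\C', \D) = (\Funlax_\O((\C')\op, \D\op))\op$; specializing $\C' = \O$ (viewed with its tautological $\O$-monoidal structure, so that $\O\op$ with the dual structure is the relevant source) and $\D = \C$ gives exactly $(\Funlax_\O(\O\op, \C\op))\op$.

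The main obstacle, and the step requiring genuine care rather than formal manipulation, is matching the two descriptions at the level of the source operad $\O$: in the definition of $\coalg_\O$ the algebra lives over $\C\op$ with source $\O$, whereas the definition of $\Funcolax_\O$ takes the opposite of functors out of $\O\op$. I would need to verify that the tautological $\O$-monoidal structure on $\O$ is self-dual in the appropriate sense, or equivalently that a map of $\infty$-operads $\O \to \C\op$ corresponds under opposites to a colax $\O$-monoidal functor $\O \to \C$. This is where the generalization of the dual monoidal structure via \cite{dualcocart} is actually used, and one must check that the $\O$-monoidal structures on $\O$ and $\O\op$ are compatible so that the source appearing in $\Funcolax_\O(\O,\C)$ genuinely matches. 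Once this identification of sources is confirmed, the equivalence follows by simply comparing the two $(-)\op$-decorated expressions for lax functors, and no further computation is needed.
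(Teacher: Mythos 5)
Your proposal is correct and follows essentially the same route as the paper: unwind $\coalg_\O(\C)=(\alg_\O(\C\op))\op$, apply $\alg_\O(\C\op)\simeq\Funlax_\O(\O,\C\op)$, and identify the result with $\Funcolax_\O(\O,\C)$ by the key observation that the opposite $\O$-monoidal structure on $\O$ is $\O$ itself (not $\O\op$). The one step you flag as needing verification is exactly the point the paper settles by noting that the dualization of the identity coCartesian fibration over $\O$ into a Cartesian fibration is again the identity over $\O$, citing \cite{dualcocart}.
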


\begin{proof}
Recall from \cite[2.4.2.7]{HA} and \cite{dualcocart} that $\C\op$ is $\O$-monoidal as follows. Given a coCartesian fibration $p:\C^\otimes\rightarrow \O^\otimes$ that provides $\C$ its $\O$-monoidal structure, its straightening is a functor $F:\O^\otimes \rightarrow \widehat{\mathsf{Cat}}_\infty$, with value in the $\infty$-category of not necessarily small $\infty$-categories, as in \cite[3.0.0.5]{HA}. The functor $F$ also classifies a Cartesian fibration $\widehat{p}:\widehat{\C^\otimes}\rightarrow (\mathcal{O}^{\otimes})\op$. 
The fiber of $X\in \mathcal{O}$ over $\widehat{p}$ is equivalent to $(\C_X)$ -- the fiber of $X$ over $p$.
Taking the opposite yields a coCartesian fibration ${\widehat{p}}\op:(\widehat{\C^\otimes})\op \rightarrow \O^\otimes$. 
This coCartesian fibration  provides a $\O$-monoidal structure on $\C\op$. 
Notice moreover that if we view $\O$ as a $\O$-monoidal category, then its $\O$-monoidal structure is given by the coCartesian fibration $\O^\otimes\stackrel{\id}\rightarrow \O^\otimes$. Thus the associated Cartesian fibration $\widehat{\O^\otimes}\rightarrow (\O^\otimes)\op$ is simply $(\O^\otimes)\op\stackrel{\id}\rightarrow (\O^\otimes)\op$. Hence, taking opposites again, the opposite $\O$-monoidal structure of $\O$ is given by the coCartesian fibration $\O^\otimes\stackrel{\id}\rightarrow \O^\otimes$, i.e. is $\O$ again. Hence we obtain:
\[
 \Funcolax_\O(\O, \C)=(\Funlax_\O(\O, \C\op))\op\simeq (\alg_\O(\C\op))\op=\coalg_\O(\C). \qedhere
\]
\end{proof}

\begin{rem}\label{rem: strong monoidal=strong comonoidal}
A functor $F:\C\rightarrow \D$ is a strong $\O$-monoidal functor if and only if its opposite $F:\C\op\rightarrow \D\op$ is a strong $\O$-monoidal functor. This follows from \cite[1.3]{dualcocart}. In particular, we recover the well-known result that strong $\O$-monoidal functors are colax $\O$-monoidal.
\end{rem}

In order to define the cyclic bar and cobar construction in the $\infty$-category setting, we use the approach of \cite{tch}. For any symmetric monoidal $\infty$-category $\Co$ (or more generally any $\infty$-operad), we can define its active part $(\Co)_\mathsf{act}$ as the pullback (see \cite[2.2.4.1]{HA})
\[
\begin{tikzcd}
(\Co)_\mathsf{act} \ar{r} \ar{d} & N(\Fin) \ar{d}\\
\Co \ar{r} & N(\Fin_*).
\end{tikzcd}
\]
Here $\Fin\subseteq \Fin_*$ is the subcategory consisting of the active morphisms.
The fiber of $(\Co)_\mathsf{act}$ over a finite set $I$ in $\Fin$ is given by $\C^I$.
An $\ee$-algebra $A$ in $\C$ is a lax monoidal functor  $A^\ot:\assoc^\ot\rightarrow \Co$.
It extends to $A^\ot: (\assoc^\ot)_\mathsf{act} \rightarrow (\Co)_\mathsf{act}$.
The adjoint of the identity functor $\C\rightarrow \C$ is the functor
\[
\ot:(\Co)_\mathsf{act} \rightarrow \C,
\]
defined by $(X_1, \ldots, X_n)\rightarrow X_1\ot \ldots \ot X_n$, as in \cite[III.3.2]{tch}.
From \cite[B.1, B.2]{tch}, we obtain a functor:
\[
N(\Delta\op) \longrightarrow (\assoc^\otimes)_\mathsf{act}.
\]
We therefore obtain a simplicial object $\bar^\C(A)$ in $\C$, i.e. an object in the $\infty$-category $\Fun(N(\Delta\op), \C)$, as the composite of functors:
\[
\begin{tikzcd}
N(\Delta\op)\ar{r} & (\assoc^\otimes)_\mathsf{act} \ar{r}{A^\ot} & (\Co)_\mathsf{act} \ar{r}{\ot} & \C.
\end{tikzcd}
\]
This is making precise the cyclic bar construction in $\C$:
\[
\begin{tikzcd}
 \cdots \ar[shift left=3]{r}\ar[shift right=3]{r}\ar[shift left]{r}\ar[shift right]{r} &A\ot A \ot A \ar{r}\ar[shift left=2]{r}\ar[shift right=2]{r} &A\ot A\ar[shift left]{r}\ar[shift right]{r} & A.
\end{tikzcd}
\]

\begin{rem}\label{rem: our cylic bar is the usual one}
If $\C$ is the nerve of a symmetric monoidal ordinary category $\M$ and $A$ is a (strictly) associative algebra in $\M$, then $\bar^\C(A)$ corresponds to the usual cyclic bar complex of $A$ in $\M$.
\end{rem}

\begin{defi}
Let $\C$ be a symmetric monoidal $\infty$-category that admits geometric realizations.
For $A$ an $\ee$-algebra in $\C$, we define its \emph{topological Hoschchild homology} $\thh(A)$ to be the geometric realization in $\C$ of the simplicial object $\bar^\C(A)$.
\end{defi}

\begin{rem}
By \cite[B.5]{tch}, our definition of $\thh(A)$ is equivalent to the one of \cite[III.2.3]{tch} where we ignore the cyclotomic structures.
\end{rem}

Since our description of the cyclic bar construction is completely natural in the $\ee$-algebra $A$, it defines a functor:
\[
\bar^\C(-):\alg_\ee(\C)\longrightarrow \Fun(N(\Delta\op), \C),
\]
for a symmetric monoidal $\infty$-category $\C$. In particular, if we apply this to the opposite symmetric monoidal $\infty$-category $\C\op$, we obtain the functor:
\[
\alg_\ee(\C\op)\longrightarrow \Fun(N(\Delta\op), \C\op).
\]
Taking opposite on each side of this functor, we obtain:
\[
\Big(\alg_\ee(\C\op)\Big)\op \longrightarrow \Big(\Fun(N(\Delta\op), \C\op)\Big)\op=\Fun(N(\Delta),(\C\op)\op),
\]
i.e. we constructed a functor
\[
\cobar_\C(-):\coalg_\ee(\C)\longrightarrow \Fun(N(\Delta), \C).
\]
This defines a \emph{cobar} cyclic complex in $\C$:
\[
\begin{tikzcd}
 \cdots & \ar[shift left=3]{l}\ar[shift right=3]{l}\ar[shift left]{l}\ar[shift right]{l} C\ot C\ot C & \ar{l}\ar[shift left=2]{l}\ar[shift right=2]{l} C\ot C& \ar[shift left]{l}\ar[shift right]{l}  C.
\end{tikzcd}
\]

\begin{defi}\label{defi: cothh}
Let $\C$ be a symmetric monoidal $\infty$-category that admits totalizations.
Let $C$ be an $\ee$-coalgebra in $\C$.
Its \emph{topological coHochschild homology} $\cothh(C)$ is defined to be the totalization in $\C$ of the cosimplicial object $\cobar_\C(C)$.
\end{defi}

\begin{notation}
We shall sometimes write $\cothh^\C(C)$ to emphasize the symmetric monoidal $\infty$-category $\C$ that is being considered. 
If $R$ is an $\ei$-algebra in $\C$, we may write $\cothh^{\Mod_R(\C)}(C)$ simply by $\cothh^R(C)$, where $C$ is an $\ee$-coalgebra in $\Mod_R(\C)$. 
For $\C= \sp$ we follow the notation of the algebraic case and denote by $\cothh^R_*(C)$ the homotopy groups $\pi_*( \cothh^R(C))$, for any $R$-coalgebra $C$. {Furthermore if  $R=\sph$, we omit $\sph$ and write $\cothh(C)$ instead of $\cothh^\sph(C)$.}
\end{notation}

Let $\M$ be a \emph{symmetric monoidal model category} (see \cite[4.2.6]{hovey}), and denote by $\W$ its class of weak equivalences. Let $\M_c$ denote the full subcategory spanned by the cofibrant objects. Let us denote $\MM$ its underlying symmetric monoidal $\infty$-category as defined in \cite[4.1.7.6]{HA}. This is also sometimes called the \emph{(symmetric monoidal) Dwyer-Kan localization} of the (symmetric monoidal) model category $\M$.

In \cite[2.3]{toolscothh}, the authors defined the topological coHochschild homology $\cothh^\M(C)$ of a (strictly) coassociative counital coalgebra $C$ in $\M$ to be the homotopy limit in $\M$ of the cosimplicial object given by the cyclic cobar complex. We show that our definition agrees with the one in model categories in Proposition \ref{Prop: def of cothh agrees} below.

\begin{lem}\label{lem: strong monoidal preserves cyclic bar}
Let $F:\C\rightarrow \D$ be a strong symmetric monoidal functor between symmetric monoidal $\infty$-categories. Then: {\color{blue} }
\[
F\circ\bar^\C(-) \simeq \bar^\D(-)\circ F.
\]
Dually, we also obtain:
\[
F\circ\cobar_\C(-) \simeq \cobar_\D(-)\circ F.
\]
\end{lem}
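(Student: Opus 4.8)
The plan is to exhibit a natural equivalence by tracing through the explicit construction of $\bar^\C(-)$ given just above the statement, using the fact that strong symmetric monoidal functors are compatible with each piece of that construction. Recall that $\bar^\C(A)$ was defined as the composite
\[
N(\Delta\op)\longrightarrow (\assoc^\ot)_\mathsf{act} \xrightarrow{A^\ot} (\Co)_\mathsf{act} \xrightarrow{\ot} \C.
\]
The first arrow depends only on the operad $\assoc^\ot$ and is independent of $F$; so everything reduces to comparing the last two arrows after applying $F$. The key point is that a strong symmetric monoidal functor $F\colon \C\to\D$ induces a functor $F^\ot_\mathsf{act}\colon (\Co)_\mathsf{act}\to(\mathcal{D}^\ot)_\mathsf{act}$ on active parts, and that this functor is compatible both with the $\ot$-multiplication functors and with postcomposition of algebras.

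First I would record the two compatibilities that make the diagram commute. On the algebra side, since $F$ is strong symmetric monoidal it sends $\ee$-algebras to $\ee$-algebras, and at the level of the defining lax monoidal functors this is precisely the statement that $(FA)^\ot \simeq F^\ot\circ A^\ot$ as maps $\assoc^\ot\to\mathcal{D}^\ot$; restricting to active parts gives $(FA)^\ot\simeq F^\ot_\mathsf{act}\circ A^\ot$ on $(\assoc^\ot)_\mathsf{act}$. On the multiplication side, the essential input is that $F$ being \emph{strong} monoidal means the natural maps $F(X_1)\ot\cdots\ot F(X_n)\xrightarrow{\simeq} F(X_1\ot\cdots\ot X_n)$ are equivalences; categorically this says the square
\[
\begin{tikzcd}
(\Co)_\mathsf{act} \ar{r}{F^\ot_\mathsf{act}} \ar{d}[swap]{\ot} & (\mathcal{D}^\ot)_\mathsf{act} \ar{d}{\ot}\\
\C \ar{r}{F} & \D
\end{tikzcd}
\]
commutes up to natural equivalence. (By contrast, for a merely lax monoidal $F$ this square would only commute up to a noninvertible natural transformation, which is why the hypothesis that $F$ is strong is needed.) I would cite \cite[2.1.3.7]{HA} and the construction of $\ot$ in \cite[III.3.2]{tch} for these two facts.

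Assembling these, the two composites $F\circ\bar^\C(A)$ and $\bar^\D(FA)$ are both equivalent to the total composite
\[
N(\Delta\op)\longrightarrow (\assoc^\ot)_\mathsf{act} \xrightarrow{A^\ot} (\Co)_\mathsf{act} \xrightarrow{F^\ot_\mathsf{act}} (\mathcal{D}^\ot)_\mathsf{act} \xrightarrow{\ot} \D,
\]
the first by pasting the multiplication square along $A^\ot$ and the second by the algebra-level compatibility. Since all constructions are natural in $A$, this yields the asserted equivalence of functors $F\circ\bar^\C(-)\simeq\bar^\D(-)\circ F$. Finally, the dual statement for $\cobar$ follows formally: by definition $\cobar_\C(-)$ was obtained from $\bar^{\C\op}(-)$ by taking opposites, and a strong symmetric monoidal $F$ induces a strong symmetric monoidal $F\op\colon\C\op\to\D\op$ (using Remark \ref{rem: strong monoidal=strong comonoidal}), so applying the already-proven bar statement to $F\op$ and dualizing gives $F\circ\cobar_\C(-)\simeq\cobar_\D(-)\circ F$.

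The main obstacle I anticipate is making the first compatibility (the commuting multiplication square) genuinely precise rather than merely heuristic: one must show that the comparison maps packaged in the definition of a strong monoidal functor assemble into a functor $F^\ot_\mathsf{act}$ over $N(\Fin)$ compatible with the left-adjoint $\ot$, which is a coherence statement about the operadic construction of $\ot$ in \cite{tch} rather than a one-line check. Everything else—naturality in $A$ and the passage to the dual $\cobar$ statement—should be formal once this square is in hand.
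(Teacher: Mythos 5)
Your proposal is correct and follows essentially the same route as the paper's proof: both reduce the statement to the commutativity of the square relating $F_{\mathsf{act}}$ on active parts with the two $\otimes$-functors, combined with the compatibility $(FA)^\otimes\simeq F_{\mathsf{act}}\circ A^\otimes$, and both deduce the cobar statement by applying the bar statement to $F\op$. The ``main obstacle'' you flag---making the commuting multiplication square precise---is exactly the point the paper disposes of by citing \cite[4.4]{liam}, so your instinct about where the real content lies is accurate.
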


\begin{proof}
By \cite[4.4]{liam}, we obtain the commutative diagram of $\infty$-categories:
\[
\begin{tikzcd}
(\C^\otimes)_{\mathsf{act}} \ar{d}[swap]{F_\mathsf{act}} \ar{r}{\otimes} & \C \ar{d}{F}\\
(\D^\otimes)_\mathsf{act} \ar{r}{\otimes} & \D.
\end{tikzcd}
\]
Let $A$ be an $\ee$-algebra in $\C$. By the above result, we obtain that the composite
\[
\begin{tikzcd}
(\assoc^\otimes)_\mathsf{act} \ar{r}{A^\otimes} & (\C^\otimes)_{\mathsf{act}} \ar{r}{\otimes} & \C \ar{r}{F}
 & \D\end{tikzcd}
\]
is equivalent to the composite
\[
\begin{tikzcd}
(\assoc^\otimes)_\mathsf{act} \ar{r}{F(A)^\otimes} & (\D^\otimes)_\mathsf{act} \ar{r}{\otimes} & \D. 
\end{tikzcd}
\]
Therefore, we obtain $F\left(\bar^\C(A)\right) \simeq \bar^\D(F(A))$. We obtain the dual statement for cobar by applying our above argument to the strong symmetric monoidal functor $F\op:\C\op\rightarrow \D\op$. 
\end{proof}

\begin{prop}\label{Prop: def of cothh agrees}
Let $\M$ be a combinatorial symmetric monoidal model category.
Let $C$ be a coassociative coalgebra in $\M$ that is cofibrant as an object in $\M$.
Then $\cothh^\M(C)$ is weakly equivalent to $\cothh^\MM(C)$.
\end{prop}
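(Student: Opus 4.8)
The plan is to compare the two totalizations by exhibiting a strong symmetric monoidal functor relating $\M$ to its Dwyer--Kan localization $\MM$, and then invoking Lemma \ref{lem: strong monoidal preserves cyclic bar} to conclude that this functor carries the cobar cosimplicial object computed in $\M$ to the one computed in $\MM$. The canonical functor $N(\M_c) \to \MM$ underlying the localization is strong symmetric monoidal by the construction in \cite[4.1.7.6]{HA}, so it is the natural candidate. First I would record that a coassociative counital coalgebra $C$ in $\M$ that is cofibrant gives rise to an $\ee$-coalgebra in $\MM$ via this functor, so that $\cobar_\MM(C)$ makes sense and, by the dual half of Lemma \ref{lem: strong monoidal preserves cyclic bar}, agrees levelwise with the image of the strict cyclic cobar complex of $C$ in $\M$.

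Next I would address the two different flavors of limit being taken. On the model-categorical side, $\cothh^\M(C)$ is defined as the homotopy limit in $\M$ of the strict cosimplicial object, whereas $\cothh^\MM(C)$ is the totalization (the $\infty$-categorical limit) in $\MM$ of $\cobar_\MM(C)$. The key comparison result is that the localization functor $N(\M_c)\bigl[\Wo\bigr]$ computes homotopy limits correctly: for a combinatorial model category, the underlying $\infty$-category of $\M$ admits all small limits, and the homotopy limit of a diagram in $\M$ is computed by the corresponding $\infty$-categorical limit in $\MM$. I would cite the relevant comparison (in the spirit of \cite[1.3.4.24]{HA} or the general theory of \cite[4.2.4]{htt} relating homotopy limits in model categories to limits in the localized $\infty$-category) to identify the homotopy limit of the strict diagram with the totalization of its image in $\MM$.

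Stringing these two identifications together gives the result: the image of the strict cyclic cobar complex under localization is $\cobar_\MM(C)$ by Lemma \ref{lem: strong monoidal preserves cyclic bar}, and its totalization in $\MM$ agrees with the homotopy limit in $\M$ of the original diagram, which is $\cothh^\M(C)$. Hence $\cothh^\M(C)\simeq \cothh^\MM(C)$.

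The main obstacle I anticipate is the second step: one must be careful that the \emph{strict} cosimplicial cobar object in $\M$ is objectwise cofibrant (or at least that its homotopy limit is computed correctly without further fibrant replacement), and that the comparison between homotopy limits in $\M$ and $\infty$-categorical limits in $\MM$ genuinely applies to this particular cosimplicial diagram. Totalizations are limits over $N(\Delta)$, which is not a finite diagram, so the combinatoriality hypothesis on $\M$ is what guarantees the requisite (co)fibrant replacements and the existence and invariance of the homotopy limit; I would flag this as the technical heart of the argument and make sure the hypotheses of the cited limit-comparison theorem are met for cosimplicial objects specifically.
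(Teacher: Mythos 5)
Your proposal is correct and follows essentially the same route as the paper: pass the strict coalgebra through the strong symmetric monoidal localization $N(\M_c)\to\MM$ (the paper packages this step as the functor $\iota$ of \cite[3.1]{perouxDKloc}), identify the two cyclic cobar complexes via Lemma \ref{lem: strong monoidal preserves cyclic bar}, and then invoke the comparison of homotopy limits in a combinatorial model category with limits in its underlying $\infty$-category (\cite[1.3.4.23]{HA}). The technical point you flag about objectwise cofibrancy is handled by the pushout-product axiom, which makes the tensor powers of the cofibrant object $C$ cofibrant.
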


\begin{proof}
In \cite[3.1]{perouxDKloc}, there is a functor of $\infty$-categories:
\[
\iota:N(\mathsf{coAlg}(\M_c))\big[\Wwo\big]\longrightarrow \coalg_\ee\left(\MM\right),
\]
which is the identity on objects in $\MM$. Here $\Ww$ denotes the weak equivalences in $\M$ that are also maps in $\coalg(\M_c)$. 
The functor $\iota$ allows one to consider the (strictly) coassociative counital coalgebra $C$ in $\M$ as an $\ee$-coalgebra $\iota(C)$ in the Dwyer-Kan localization of $\M$. 
Since the localization $N(\M_c)\rightarrow \MM$ is a strong symmetric monoidal functor, then by Remark \ref{rem: our cylic bar is the usual one}, \cite[1.3.4.25]{HA}, and Lemma \ref{lem: strong monoidal preserves cyclic bar}, the two definitions of the cyclic cobar complexes agree. The desired result follows by \cite[1.3.4.23]{HA}, as the homotopy limit of the cyclic cobar construction of $C$ in $\M$ is equivalent to the totalization of the cyclic cobar construction of $\iota(C)$ in the Dwyer-Kan localization of $\M$.
\end{proof}

\begin{rem}
In the proof of Proposition \ref{Prop: def of cothh agrees}, we are not claiming that the $\infty$-category $N(\mathsf{coAlg}(\M_c))[\Wwo]$ is the Dwyer-Kan localization of a model structure on $\coalg(\M)$ as such a model structure may not exist, see Remark \ref{rem: cothh difficult in model categories} below. Here we are instead referring to the Dwyer-Kan localization of the $\infty$-category $N(\mathsf{coAlg}(\M_c))$ with respect to the class of edges induced by the weak equivalences $\Ww$, as in \cite[1.3.4.1]{HA}. 
\end{rem}

\begin{rem}\label{rem: cothh difficult in model categories}
The model category approach faces several challenges. First of all, given a combinatorial symmetric monoidal model category $\M$, there are no conditions in general to lift {the model structure on $\M$ to} a model structure on the category of (strictly) coassociative counital coalgebras $\coalg(\M)$. In the cases where a model structure exists, the model structure of $\M$ is generally replaced by a Quillen equivalent one that is not a monoidal model category, see \cite{left2, left3}. Second, given a model structure in $\coalg(\M)$ induced by one on $\M$, there is no reason to expect that strictly coassociative coalgebras in $\M$ are equivalent to $\ee$-coalgebras in the Dwyer-Kan localization of $\M$, unlike the case for algebras (as in \cite[4.1.8.4]{HA}). As a matter of fact, there is indication that these do \emph{not} correspond. In particular, for $\M$ any symmetric monoidal model category of spectra, such as symmetric spectra or orthogonal spectra, it is shown that strictly coassociative coalgebras do \emph{not} correspond to $\ee$-coalgebras in the $\infty$-category of spectra $\sp$, see \cite{perouxshipley} and \cite[5.6]{perouxDKloc}. 
\end{rem}

\section{Duality Between Algebras and Coalgebras}\label{sec general duality between algebras and coalgebras}
From a coalgebra, one can always obtain an associated algebra by taking the dual. The converse is in general not true. We make this fact precise in this section. 
We also show here equivalences between subclasses of algebras and coalgebras that we introduce. 

\subsection{The Linear Dual Functor} {We introduce  the notion of dualization for closed symmetric monoidal $\infty$-categories.} We recall the well-known result that the dual of a coalgebra is always an algebra.

\begin{defi} \label{def duality functor}
Let $\C$ be a closed symmetric monoidal $\infty$-category. Denote \[[-,-]:\C\op\times \C\rightarrow \C,\] its internal hom defined by the universal property $\C(X\ot Y, Z) \simeq \C(X, [Y, Z]).$ It is lax symmetric monoidal, see \cite[A.5.3]{hinrect} or \cite[I.3]{runelax}.
Let $\I$ be the unit of the symmetric monoidal structure of $\C$. 
For any object $X$ in $\C$, define $X^\vee$ to be $[X, \I]$. This defines a lax symmetric monoidal functor: \[(-)^\vee \co \C\op\rightarrow \C,\] called the \emph{linear dual functor}. We denote its opposite $((-)^\vee)^\op \co \C \to \C\op$ again by:
\[(-)^\vee \co \C \to \C\op,\]
and also refer to it as the linear dual functor. 
\end{defi}

\begin{ex}
If $\C$ is the nerve of the category of vector spaces over a field $k$, then the linear dual functor $(-)^\vee$ is precisely the dual of a vector space over $k$.
\end{ex}

\begin{ex}
{If $\C$ is the $\infty$-category of spectra $\sp$, then the linear dual is precisely the Spanier-Whitehead dual.}
\end{ex}

The linear dual is what is called a \emph{self-adjoint (contravariant) functor}: a property that we describe in the following result.

\begin{prop}\label{prop: linear dual is self-adjoint}
Let $\C$ be a closed symmetric monoidal $\infty$-category. Then we obtain an adjunction:
\[
\begin{tikzcd}[column sep= large]
\C \ar[shift left=2]{r}{(-)^\vee}[swap]{\perp} & \C\op. \ar[shift left=2]{l}{(-)^\vee}
\end{tikzcd}
\]
\end{prop}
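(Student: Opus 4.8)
The plan is to establish the adjunction by exhibiting a single natural equivalence between the two mapping-space bifunctors and then invoking the standard characterization of adjoint functors: an adjunction $L\dashv R$ between $\infty$-categories is the same datum as a natural equivalence $\D(L-,-)\simeq\C(-,R-)$ of functors $\C\op\times\D\to\mathcal{S}$ into the $\infty$-category of spaces $\mathcal{S}$ (see \cite[\S 5.2]{htt}). Writing $L=(-)^\vee\co\C\to\C\op$ for the top arrow and $R=(-)^\vee\co\C\op\to\C$ for the bottom arrow, I must therefore produce a natural equivalence
\[
\C\op(X^\vee,Y)\simeq \C(X,Y^\vee),\qquad X\in\C,\ Y\in\C\op.
\]

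First I would unwind the left-hand side. Since $\C\op$ has the same objects as $\C$ and reversed mapping spaces, and since $X^\vee=[X,\I]$, we have $\C\op(X^\vee,Y)\simeq\C(Y,[X,\I])$, where I abusively write $Y$ also for the underlying object of $\C$. Then I would run the chain
\[
\C(Y,[X,\I])\simeq \C(Y\ot X,\I)\simeq \C(X\ot Y,\I)\simeq \C(X,[Y,\I])=\C(X,Y^\vee),
\]
in which the outer two equivalences are the universal property of the internal hom recalled in Definition \ref{def duality functor}, and the middle one is induced by the symmetry isomorphism $Y\ot X\simeq X\ot Y$ of the symmetric monoidal structure on $\C$. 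This already identifies $L$ as a left adjoint of $R$ object-wise.

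The hard part will be promoting this object-wise chain to a genuine equivalence of functors $\C\op\times\C\op\to\mathcal{S}$, that is, a coherent identification rather than a bijection computed separately on each pair of objects. The tensor–hom steps are natural because they arise from the counit of the internal-hom adjunction, so the delicate point is the middle step: one must use the braiding as the symmetry \emph{natural} equivalence that is part of the datum of the symmetric monoidal structure on $\C$, not an ad hoc isomorphism chosen for each $(X,Y)$. I would handle this by realizing the entire chain at the level of the associated (co)representable functors, so that naturality in both variables is automatic, and only then read off the adjunction. Finally I would record that the unit of the resulting adjunction is the canonical biduality transformation $\id_\C\to R\circ L=(-)^{\vee\vee}$, and that taking opposites exchanges $L$ and $R$, so the adjunction is self-dual --- which is precisely the sense in which $(-)^\vee$ is self-adjoint.
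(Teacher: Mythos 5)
Your argument is correct and follows essentially the same route as the paper: the identical chain $\C\op(X^\vee,Y)\simeq\C(Y,X^\vee)\simeq\C(Y\ot X,\I)\simeq\C(X\ot Y,\I)\simeq\C(X,Y^\vee)$ using the internal-hom universal property twice and the symmetry once. The only cosmetic difference is in how the adjunction is certified at the end: you propose to upgrade the chain to a coherent equivalence of bifunctors, while the paper instead extracts the unit $X\to X^{\vee\vee}$ from the case $Y=X^\vee$ and invokes \cite[5.2.2.8]{htt}, which sidesteps some of the coherence bookkeeping you flag as the hard part.
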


\begin{proof}
This is a classical result in ordinary categories. It follows from the string of equivalences:
\begin{eqnarray*}
\C\op(X^\vee, Y) & \simeq & \C(Y , X^\vee)\\
& \simeq & \C(Y\ot X, \I) \\
& \simeq & \C(X\ot Y, \I)\\
& \simeq & \C(X, Y^\vee),
\end{eqnarray*}
for every $X$ and $Y$ in $\C$. {Here we used the natural symmetry equivalence $X\ot Y\simeq Y \ot X$.} If we choose $Y=X^\vee$, then the identity morphism $X^\vee\rightarrow X^\vee$ in $\C\op$ defines a natural morphism $X\rightarrow X^{\vee\vee}$ via the equivalence
\[
\C(X, X^{\vee\vee})\simeq \C\op(X^\vee, X^\vee),
\]
defined above. This provides the  unit of the adjunction (see \cite[5.2.2.8]{htt}).
\end{proof}

The right adjoint functor $(-)^\vee:\C\op\rightarrow \C$, being lax symmetric monoidal, lifts to the category of algebras
\[
(-)^\vee:\coalg_\O(\C)\op\simeq{\alg_\O(\C\op)} \longrightarrow \alg_\O(\C),
\]
for any $\infty$-operad $\O$. This functor provides a description of the well-known fact that the dual of a coalgebra is always an algebra.
Since limits in algebras are computed in their underlying category, the linear dual $(-)^\vee:\coalg_\O(\C)\op\rightarrow \alg_\O(\C)$ remains a limit-preserving functor. 

However, as the linear dual is not strong monoidal in general, it does not lift to a functor $(-)^\vee:\alg_\O(\C)\rightarrow \coalg_\O(C)\op$.
In other words, if $A$ is an $\O$-algebra in $\C$, then we cannot claim $A^\vee$ is an $\O$-coalgebra in $\C$ in general. We are interested in the cases where an $\O$-algebra $A$ induces an $\O$-coalgebra structure on its linear dual via a comultiplication induced by:
\[
\begin{tikzcd}
A^\vee\ar{r} & (A\otimes A)^\vee\\
& A^\vee \otimes A^\vee, \ar{u}
\end{tikzcd}
\]
where the vertical map is an equivalence. 
We are also interesting in knowing conditions for which the linear dual functor $(-)^\vee:\coalg_\O(\C)\op\rightarrow \alg_\O(\C)$ is an equivalence of $\infty$-categories. 

\subsection{Proper Algebras and Coalgebras} It is a well-known result that algebras and coalgebras over finite dimensional vector spaces are anti-equivalent. We remind here the result of \cite{lurie2} that shows this in greater generality.

\begin{defi}[{\cite[4.6.1]{HA}}]
An object $X$ is said to be \emph{dualizable} in a symmetric monoidal $\infty$-category $\C$ if there exists another object $Y$ in $\C$, together with maps:
\[
e:Y\ot X \rightarrow \I, \qquad c:\I\rightarrow X\ot Y,
\]
for which the composite maps:
\[
\begin{tikzcd}
X\simeq \I\ot X \ar{r}{c\ot \id} & X\ot Y \ot X \ar{r}{\id\ot e} & X\ot \I \simeq X,
\end{tikzcd}
\]
\[
\begin{tikzcd}
Y \simeq Y\ot \I \ar{r}{\id\ot c} & Y\ot X \ot Y \ar{r}{e\ot \id} & \I\ot Y\simeq Y,
\end{tikzcd}
\]
are homotopic to the identity. In other words, an object $X$ is dualizable if and only if it is dualizable in the homotopy category $h\C$, see \cite[4.6.1.6]{HA}. Let $\C_\dual$ denote the full subcategory of $\C$ spanned by the dualizable objects.  By \cite[3.2.4]{lurie2}, the tensor product of $\C$ is closed in $\C_\dual$ and thus $\C_\dual$ is also symmetric monoidal. 
\end{defi}

\begin{ex}[{\cite[3.2.1]{lurie2}}]\label{ex: dualizable=fin dim in vector spaces}
If $\C$ is the nerve of the  category of vector spaces over a field $k$, then the dualizable objects are precisely the finite dimensional vector spaces over $k$.
\end{ex}

\begin{ex}
The dualizable objects in the $\infty$-category of spectra $\sp$ are precisely the finite spectra.
\end{ex}
\begin{ex}[{\cite[3.2.3]{lurie2}}]\label{ex: dualizable in R-modules}
Let $R$ be a commutative ring spectrum. Let $\C$ be the $\infty$-category of $R$-modules $\Mod_R(\sp)$. Then the dualizable objects in $\C$ are precisely the compact objects in $\C$, i.e. the perfect $R$-modules. {An $R$-module is said to be perfect if it lies in the smallest subcategory of $\textup{Ho}(\Mod_R(\sp))$ that contains $R$ and is closed under triangles and retracts.}
\end{ex}

\begin{ex}\label{ex dualizable in hk modules}
Let $k$ be a field. An $Hk$-module $X$ in $\sp$ is dualizable in $\Mod_{Hk}(\sp)$ if and only if $\pi_*X$ is finitely generated as a graded $k$-module.
\end{ex}

\begin{lem}\label{lem: dualizable in closed monoidal cat}
Let $\C$ be a closed symmetric monoidal $\infty$-category. If $X$ is dualizable in $\C$, then its dual is equivalent to the linear dual $X^\vee$.
\end{lem}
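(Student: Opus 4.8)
The plan is to produce a canonical comparison morphism $Y\to X^\vee$ out of the dualizing data and then show it is an equivalence by testing against mapping spaces. Write $e\colon Y\ot X\to \I$ and $c\colon \I\to X\ot Y$ for the evaluation and coevaluation witnessing the dualizability of $X$, and recall that $X^\vee=[X,\I]$ is characterized by the universal property $\C(A\ot X,\I)\simeq \C(A,[X,\I])$. Under this adjunction the evaluation $e$ is adjoint to a morphism $\tilde e\colon Y\to [X,\I]=X^\vee$. It then suffices to prove that $\tilde e$ is an equivalence, and by the Yoneda lemma \cite{htt} this can be checked after applying $\C(A,-)$ for every object $A$ of $\C$.

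Fix $A$. Postcomposition with $\tilde e$ gives a map of spaces
\[
\phi\colon \C(A,Y)\longrightarrow \C(A,X^\vee)\simeq \C(A\ot X,\I),
\]
which, by the definition of $\tilde e$, sends $g\colon A\to Y$ to the composite $e\circ(g\ot\id_X)$. In the opposite direction the coevaluation produces a map
\[
\psi\colon \C(A\ot X,\I)\longrightarrow \C(A,Y),
\]
sending $h$ to $(h\ot\id_Y)\circ(\id_A\ot c)$, after identifying $A\simeq A\ot\I$ and $\I\ot Y\simeq Y$. I would then compute the two round-trips. Using bifunctoriality of $\ot$ to slide $g$ past the coevaluation, the composite $\psi\circ\phi$ applied to $g$ becomes $\bigl[(e\ot\id_Y)\circ(\id_Y\ot c)\bigr]\circ g$, and the bracketed map is exactly the zigzag endomorphism of $Y$ appearing in the second triangle identity of the definition of dualizability; hence $\psi\circ\phi\simeq\id$. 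Symmetrically, $\phi\circ\psi$ reduces to the zigzag endomorphism of $X$ from the first triangle identity, so $\phi\circ\psi\simeq\id$. Therefore $\phi$ is an equivalence of spaces for every $A$, and $\tilde e$ is an equivalence.

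The only real subtlety is $\infty$-categorical coherence: the dualizing data only guarantees the triangle identities up to homotopy, not as coherent higher data, so one cannot blindly assert that $-\ot X$ and $-\ot Y$ assemble into an $\infty$-adjunction. The point of reducing to mapping spaces is precisely that these homotopy-level identities are exactly what is needed there — the triangle-identity homotopies furnish the homotopies $\psi\circ\phi\simeq\id$ and $\phi\circ\psi\simeq\id$ — while naturality in $A$ is automatic, since $\phi$ is induced by postcomposition with the single fixed morphism $\tilde e$. An alternative, more structural route is to observe that the closed structure exhibits $[X,-]$ as a right adjoint of $-\ot X$, while the dualizing data exhibits $-\ot Y$ as another right adjoint; uniqueness of adjoints would then give $[X,-]\simeq -\ot Y$, and evaluating at $\I$ would yield $X^\vee=[X,\I]\simeq \I\ot Y\simeq Y$. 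This framing is cleaner, but it first requires upgrading the dualizing data to a genuine adjunction of functors, which is exactly where the same coherence issue would have to be confronted; the mapping-space argument sidesteps it.
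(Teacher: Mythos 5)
Your argument is correct, and it takes a genuinely different route from the paper's. The paper's proof is the ``structural'' alternative you sketch at the end: it observes that $Y\ot -$ is a right adjoint of $X\ot -$ with unit and counit induced by $c$ and $e$, that $[X,-]$ is another right adjoint by the closed structure, and concludes $[X,-]\simeq Y\ot -$ by uniqueness of right adjoints, then evaluates at $\I$. That version is shorter but, as you correctly diagnose, silently relies on promoting the homotopy-level duality data to a genuine adjunction of $\infty$-functors; this is legitimate (dualizability is detected in the homotopy category by \cite[4.6.1.6]{HA}, and unit--counit data satisfying the triangle identities up to homotopy does produce an adjunction of $\infty$-categories), but it is a nontrivial input that the paper does not spell out. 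Your mapping-space argument avoids that input entirely: the comparison map $\tilde e\co Y\to X^\vee$ adjoint to $e$ is canonical, $\phi=\C(A,\tilde e)$ is tautologically natural in $A$, and the two round-trip computations use only bifunctoriality of $\ot$ and the two triangle-identity homotopies, which is exactly the data the definition of dualizability provides. (Indeed one only needs $\phi$ to be a bijection on $\pi_0$ of mapping spaces for every $A$ to conclude that $\tilde e$ is an equivalence, since equivalences are detected in $h\C$.) In short: the paper's proof buys brevity at the cost of an unstated coherence upgrade; yours is longer but self-contained and also exhibits the explicit equivalence $Y\simeq X^\vee$ rather than deducing its existence abstractly.
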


\begin{proof}
Let $X$ be a dualizable object in $\C$, with dual $Y$. Then $Y\otimes -:\C\rightarrow \C$ is a right adjoint of $X\otimes-:\C\rightarrow \C$. The unit and counit of the adjunction are induced by the maps $e:Y\otimes X\rightarrow \I$ and $c:\I\rightarrow X\otimes Y$. The universal property of tensor product with its internal hom gives the equivalence of functors on $\C$:
\[
[X, -]\simeq Y\otimes -,
\]
as they are both the right adjoint of the functor $X\ot-:\C\rightarrow \C$.
Then in particular we get the equivalence $X^\vee\simeq Y$. 
\end{proof}

\begin{prop}[{\cite[3.2.4]{lurie2}}]\label{prop: antiequivalence between dualizable objects}
Let $\C$ be a closed symmetric monoidal $\infty$-category. Then the linear dual determines an equivalence of symmetric monoidal $\infty$-categories:
\[
(-)^\vee: (\C_\dual)\op \stackrel{\simeq}\longrightarrow \C_\dual,
\]
with the inverse equivalence given by itself.
\end{prop}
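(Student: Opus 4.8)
The plan is to show that the linear dual functor $(-)^\vee \co (\C_\dual)\op \to \C_\dual$ is well-defined on $\C_\dual$, fully faithful, and essentially surjective, with its own opposite providing an inverse up to equivalence. First I would verify that the functor lands in $\C_\dual$: if $X$ is dualizable with dual $Y$, then by Lemma \ref{lem: dualizable in closed monoidal cat} we have $X^\vee \simeq Y$, and $Y$ is itself dualizable (with dual $X$), since the defining evaluation and coevaluation maps $e\co Y\ot X \to \I$ and $c\co \I \to X\ot Y$ are symmetric in the roles of $X$ and $Y$. Hence $(-)^\vee$ restricts to an endofunctor of $(\C_\dual)\op$ valued in $\C_\dual$. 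This is essentially a repackaging of the symmetry in the definition of dualizability.

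Next I would establish that the composite $(-)^{\vee\vee}$ is naturally equivalent to the identity on $\C_\dual$. The unit of the self-adjunction from Proposition \ref{prop: linear dual is self-adjoint} furnishes a natural transformation $X \to X^{\vee\vee}$; the key point is that this is an equivalence precisely when $X$ is dualizable. For dualizable $X$, Lemma \ref{lem: dualizable in closed monoidal cat} gives $X^\vee \simeq Y$ and then $X^{\vee\vee}\simeq Y^\vee \simeq X$, and one checks that the biduality map realizes this equivalence by comparing the two triangle identities in the homotopy category $h\C$ (invoking \cite[4.6.1.6]{HA}, so that all checks reduce to $h\C$). This immediately yields both that $(-)^\vee$ is essentially surjective and that its opposite is an inverse, so it is an equivalence of underlying $\infty$-categories.

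To upgrade this to an equivalence of \emph{symmetric monoidal} $\infty$-categories, I would use that $(-)^\vee$ is lax symmetric monoidal (Definition \ref{def duality functor}), and show that on dualizable objects the structure maps $X^\vee \ot Y^\vee \to (X\ot Y)^\vee$ and $\I \to \I^\vee$ are equivalences. The unit is self-dual since $\I$ is dualizable with dual $\I$, and the tensor-product comparison follows because the dual of a tensor product of dualizable objects is the tensor of the duals, which can again be verified in $h\C$ using that $\C_\dual$ is symmetric monoidal by \cite[3.2.4]{lurie2}. Thus $(-)^\vee$ is strong symmetric monoidal on $\C_\dual$, and together with the equivalence of underlying $\infty$-categories this gives an equivalence of symmetric monoidal $\infty$-categories.

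The main obstacle I anticipate is the coherence of the biduality equivalence as a natural transformation of $\infty$-functors rather than merely an objectwise equivalence: producing the unit $X \to X^{\vee\vee}$ naturally is handled by Proposition \ref{prop: linear dual is self-adjoint}, but verifying that it is an equivalence and assembling the strong monoidal structure coherently is most cleanly done by citing \cite[3.2.4]{lurie2} directly, since reproving the full coherent anti-equivalence from scratch would require tracking the symmetric monoidal naturality data that Lurie's argument already encodes.
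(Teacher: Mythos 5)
Your proposal is correct and is consistent with the paper, which gives no independent proof of this proposition and simply cites \cite[3.2.4]{lurie2}. Your objectwise verifications (that duals of dualizable objects are dualizable, that the biduality map $X\to X^{\vee\vee}$ is an equivalence via the triangle identities checked in $h\C$, and that the lax structure maps become equivalences on $\C_\dual$) are the standard argument, and you rightly identify that the coherent naturality and symmetric monoidal data are the genuinely nontrivial part, which both you and the paper defer to Lurie.
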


\begin{defi}[{\cite[4.6.4.2]{HA}}]
Let $\C$ be a symmetric monoidal $\infty$-category. Let $\O$ be an $\infty$-operad. An $\O$-algebra $A$ in $\C$ is said to be \emph{proper} if $A$ is dualizable as an object in $\C$. Similarly, we say that an $\O$-coalgebra $C$ in $\C$ is \emph{proper} if it is dualizable as an object in $\C$.
\end{defi}

There is an anti-equivalence between proper algebras and proper coalgebras, over any $\infty$-operad, given by the linear dual functor. This follows directly from Proposition \ref{prop: antiequivalence between dualizable objects}.

\begin{cor}[{\cite[3.2.5]{lurie2}}]\label{cor: antiequivalence proper (co)algebras}
Let $\C$ be a closed symmetric monoidal $\infty$-category. Let $\O$ be an $\infty$-operad. Then the linear dual induces an equivalence: 
\[
(-)^\vee: \coalg_\O(\C_\dual)\op \stackrel{\simeq}\longrightarrow \alg_\O(\C_\dual),
\]
{where the inverse equivalence is given by itself.}
\end{cor}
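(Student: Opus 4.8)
The plan is to deduce the corollary from the strong symmetric monoidal equivalence of Proposition \ref{prop: antiequivalence between dualizable objects} by applying the functor $\alg_\O(-)$ and then unwinding the definition of coalgebras. The essential observation is that passing to $\O$-algebras is functorial for symmetric monoidal functors and carries equivalences to equivalences, so all of the work has effectively been done in Proposition \ref{prop: antiequivalence between dualizable objects}.

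First I would recall from Proposition \ref{prop: antiequivalence between dualizable objects} that the linear dual provides a strong symmetric monoidal equivalence
\[
D := (-)^\vee : (\C_\dual)^\op \xrightarrow{\ \simeq\ } \C_\dual,
\]
whose inverse is again the linear dual functor; here $(\C_\dual)^\op$ carries the opposite symmetric monoidal structure (as in \cite[2.4.2.7]{HA}, generalized via \cite{dualcocart}), which is exactly the structure entering the definition of coalgebras in Definition \ref{defi: coalgebras}. Next I would invoke the functoriality of $\alg_\O(-)$: a strong symmetric monoidal functor $F:\mathcal{E}\to\mathcal{F}$ is a map of $\infty$-operads $F^\ot:\mathcal{E}^\ot\to\mathcal{F}^\ot$ over $N(\Fin_*)$ preserving coCartesian edges, and since an $\O$-algebra is a map of $\infty$-operads $\O^\ot\to\mathcal{E}^\ot$, post-composition with $F^\ot$ yields $\alg_\O(F):\alg_\O(\mathcal{E})\to\alg_\O(\mathcal{F})$. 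When $F$ is an equivalence, post-composition with a monoidal inverse furnishes an inverse to $\alg_\O(F)$, so $\alg_\O(-)$ sends symmetric monoidal equivalences to equivalences. Applying this to $D$ gives an equivalence
\[
\alg_\O(D) : \alg_\O\big((\C_\dual)^\op\big) \xrightarrow{\ \simeq\ } \alg_\O(\C_\dual),
\]
whose inverse is induced by the inverse of $D$, i.e.\ again by the linear dual.

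Finally I would unwind Definition \ref{defi: coalgebras}: by definition $\coalg_\O(\C_\dual)=\big(\alg_\O((\C_\dual)^\op)\big)^\op$, so that $\big(\coalg_\O(\C_\dual)\big)^\op=\alg_\O((\C_\dual)^\op)$. Substituting this into the displayed equivalence yields precisely
\[
(-)^\vee : \big(\coalg_\O(\C_\dual)\big)^\op \xrightarrow{\ \simeq\ } \alg_\O(\C_\dual),
\]
with the inverse equivalence given by itself, as claimed.

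The one point requiring genuine care — and which I expect to be the main, if minor, obstacle — is the identification of the equivalence $\alg_\O(D)$ with the linear dual functor on (co)algebras from Definition \ref{def duality functor}. That functor is induced by the \emph{lax} symmetric monoidal functor $(-)^\vee:\C^\op\to\C$, whereas $D$ is the \emph{strong} symmetric monoidal functor of Proposition \ref{prop: antiequivalence between dualizable objects}. These two agree once one checks that the lax structure maps $A^\vee\ot B^\vee\to (A\ot B)^\vee$ are equivalences on dualizable objects, which is exactly the content of Lemma \ref{lem: dualizable in closed monoidal cat} (the linear dual agrees with the monoidal dual there). Thus the lax dual restricts to the strong equivalence $D$ on $\C_\dual$, the two induced functors on $\alg_\O$ coincide, and the restriction of the globally defined linear dual functor to proper coalgebras is the equivalence produced above.
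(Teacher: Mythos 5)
Your proposal is correct and follows essentially the same route as the paper, which deduces the corollary directly from Proposition \ref{prop: antiequivalence between dualizable objects} by applying $\alg_\O(-)$ to the strong symmetric monoidal equivalence $(-)^\vee:(\C_\dual)\op\to\C_\dual$ and unwinding $\coalg_\O(\C_\dual)=\bigl(\alg_\O((\C_\dual)\op)\bigr)\op$; you merely spell out the details the paper leaves implicit. Your closing remark identifying the induced equivalence with the restriction of the globally defined lax linear dual is a worthwhile clarification (it uses that the lax structure maps are equivalences on dualizables, which follows from Lemma \ref{lem: dualizable in closed monoidal cat} together with the standard fact that the monoidal dual of a tensor product of dualizables is the tensor product of the duals), but it does not change the argument.
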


\subsection{Quasi-Proper Algebras and Coalgebras} We want to generalize the anti-equivalence between proper algebras and coalgebras of Corollary \ref{cor: antiequivalence proper (co)algebras}, by considering a larger class of algebras and coalgebras. We do so in Theorem \ref{thm: anti-equivalence between quasi-proper (co)algebras} below. We introduce the notions of quasi-proper algebras and coalgebras. This allows us to extend the equivalence between THH and coTHH (Theorem \ref{thm cothh duality general}) in the next section, see Theorem \ref{thm most general thh cothh duality}.
Given $A$ an algebra, we are also seeking conditions for its linear dual $A^\vee$ to be a coalgebra.

\begin{cond}\label{cond 1} 
Let $\C$ be a closed symmetric monoidal $\infty$-category.
We say an object $X$ in $\C$ satisfies this condition if the lax symmetric monoidal structure map of the linear dual functor $(-)^\vee:\C\op\rightarrow \C$:
\[{(X^\vee)}^{\ot n} \longrightarrow {(X^{\ot n})}^\vee\]
is an equivalence in $\C$ for all $n \geq 0$. 
\end{cond}

\begin{cond}\label{cond 2}
Let $\C$ be a closed symmetric monoidal $\infty$-category.
We say an object $X$ in $\C$ satisfies this condition if the natural unit map from the self-adjoint property of the linear dual
\[X \longrightarrow X^{\vee\vee}\]
is an equivalence in $\C$.
\end{cond}

\begin{rem}
In Condition \ref{cond 1}, the case $n=0$ is stating that $\I^\vee\simeq \I$ which is always satisfied, where $\I$ is the unit of the closed symmetric monoidal $\infty$-category $\C$. The case $n=1$ is also always satisfied for any $X$ in $\C$.
\end{rem}

We provide examples of spectra and examples of chain complexes that do not satisfy these conditions. These  examples provide a justification for the hypothesis of Theorem \ref{thm cothh duality in chains}.

\begin{ex} \label{ex hz is counterexample to cond 2}
The Eilenberg-Mac Lane spectrum $\hfp$ satisfies Condition \ref{cond 1} but does not satisfy Condition \ref{cond 2} in the $\infty$-category of spectra $\sp$.
It is known that the Spanier–Whitehead dual of $\hfp$ is trivial \cite{lin1976dualityandeilenbergmaclane}. This shows that \[\hfp^{\vee\vee}\simeq 0\]
and that $\hfp$ does not satisfy Condition \ref{cond 2} in $\sp$. Furthermore, we have the equivalences 
\[[\hfp^{\wdg n},\sph]\simeq [\hfp^{\wdg (n-1)},[\hfp,\sph]] \simeq 0\]
and therefore both sides of the map in Condition \ref{cond 1} are trivial. Therefore, $\hfp$ satisfies Condition 1 in $\sp$.
\end{ex}

\begin{ex} \label{ex cond 2 doesnt imply cond 1}
We provide here an example of an object that satisfies Condition \ref{cond 2} but not Condition \ref{cond 1} in the $\infty$-category of $Hk$-modules $\Mod_{Hk}(\sp)$, where $k$ is a field. The homotopy category of $Hk$-modules is equivalent to the category of graded $k$-modules as a symmetric monoidal category. This follows from the K\"unneth and Ext spectral sequences in \cite[\RomanNumeralCaps{4}.4.1]{elmendorf2007rings}.
Let $X$ denote the $Hk$-module corresponding to the chain complex $k[x_1,x_1^{-1}]$ with trivial differentials where $\lv x_1 \rv = 1$. Note that the homology of this chain complex and therefore the homotopy of the corresponding $Hk$-module is $k[x_1,x_1^{-1}]$. Since  dualization in $Hk$-modules results in graded $k$-module dualization at the level of homotopy groups,  we have: 
\[\pi_*(X^\vee) \cong k[x_1,x_1^{-1}]^\vee \cong k[x_1,x_1^{-1}].\]
 By inspection, it is clear that $X$ satisfies Condition \ref{cond 2} as an $Hk$-module. Note that $X \wdg_{Hk} X$ has  countably infinite dimensional homotopy groups at each degree. For instance, the basis for degree zero homotopy group is given by $x_1^i \otimes x_1^{-i}$ over all $i \in \z$. Therefore  $(X \wdg_{Hk} X)^\vee$ has \textit{uncountably} infinite dimensional homotopy at each degree. On the other hand, $X^\vee \wdg_{Hk} X^\vee$ has countably infinite dimensional homotopy at each degree. Therefore the map
\[X^\vee \wdg_{Hk}X^\vee \to (X \wdg_{Hk} X)^\vee \]
 cannot be surjective in homotopy showing that $X$ does not satisfy Condition \ref{cond 1}. This example justifies the (co)connectivity hypothesis in Theorem \ref{thm cothh duality in chains}.
\end{ex}

\begin{ex}\label{ex counter to cond 1}
 Let $X$ be a countably infinite dimensional $k$-vector space where $k$ is a field. Let $HX$ denote the corresponding $Hk$-module. Neil Strickland explains\footnote{\url{https://mathoverflow.net/questions/56255/duals-and-tensor-products}} that $HX$ does not satisfy Condition \ref{cond 1} in $Hk$-modules because the map mentioned in Condition \ref{cond 1} is not surjective in homotopy for $HX$. To see this, let $\{x_i\}_{i \in \z}$ be a basis for $X$. We consider the map in Condition \ref{cond 1} for the case $n=2$. At the level of homotopy, this map
 \[\varphi\co X^\vee \ot_k X^\vee \to (X \ot_k X)^\vee\]
 is given by: 
 \[\varphi(f \ot_k g) (x \ot_k y) = f(x) g(y).\]
 The $k$-linear map $h \co X \ot_k X \to k$ given by $h(x_i\ot_k x_j) = 0$ if $i \neq j$ and $h(x_i\ot_k x_i) = 1$ for all $i$ is not in the image of $\varphi$. This is due to the fact that the matrix $(h(x_i \ot_k x_j)_{i,j})$ has infinite rank where the matrix  $(\varphi(\Sigma_{\ell=1}^rf_\ell \otimes g_\ell) (x_i \ot_k x_j)_{i,j})$ has rank at most $r$. Therefore, $\varphi$ is not surjective and $X$ does not satisfy Condition \ref{cond 1} in $Hk$-modules. This example justifies the finiteness hypothesis in Theorem \ref{thm cothh duality in chains}.
 Furthermore, dualization increases cardinality of an infinite dimensional vector space. In particular, $\pi_0((HX)^{\vee \vee})= X^{\vee\vee}$ has a larger cardinality than $\pi_0HX=X$ and therefore $X$ does not satisfy Condition \ref{cond 2} either. 
 
\end{ex}

We now record some results that capture if the linear dual preserves the conditions above. 

\begin{lem}\label{lem: cond 2 preserved under dual}
Let $\C$ be a closed symmetric monoidal $\infty$-category. Let $X$ be an object in $\C$.
If $X$ satisfies Condition \ref{cond 2}, then $X^\vee$ satisfies Condition \ref{cond 2}. 
\end{lem}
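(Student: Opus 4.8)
The plan is to leverage the self-adjointness of the linear dual functor established in Proposition \ref{prop: linear dual is self-adjoint}, together with the triangle identities of the resulting adjunction. Write $\eta_X \colon X \to X^{\vee\vee}$ for the unit map, so that Condition \ref{cond 2} for an object $Y$ is precisely the assertion that $\eta_Y$ is an equivalence in $\C$. The goal is thus to show that if $\eta_X$ is an equivalence, then so is $\eta_{X^\vee}\colon X^\vee \to X^{\vee\vee\vee}$.

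First I would record the relevant triangle identity. Since $(-)^\vee \dashv (-)^\vee$ is a self-adjoint adjunction between $\C$ and $\C\op$, the triangle identity for the left adjoint $(-)^\vee \colon \C \to \C\op$ evaluated at $X$ reads, after translating back into $\C$, as the homotopy
\[
(\eta_X)^\vee \circ \eta_{X^\vee} \simeq \id_{X^\vee},
\]
where $(\eta_X)^\vee \colon X^{\vee\vee\vee} \to X^\vee$ is the image of $\eta_X$ under the contravariant functor $(-)^\vee$. This is the familiar fact that the double-dualization map, post-composed with the dual of the unit, recovers the identity; it holds at the level of the homotopy-coherent adjunction supplied by Proposition \ref{prop: linear dual is self-adjoint}.

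With this identity in hand the conclusion is formal. Assume $X$ satisfies Condition \ref{cond 2}, i.e.\ $\eta_X$ is an equivalence. Because $(-)^\vee$ is a functor it preserves equivalences, so $(\eta_X)^\vee$ is an equivalence in $\C$. The triangle identity then exhibits $\eta_{X^\vee}$ as a one-sided inverse of the equivalence $(\eta_X)^\vee$; since a section of an equivalence is itself an equivalence (if $g$ is an equivalence and $g \circ f \simeq \id$, then $f \simeq g^{-1}\circ g \circ f \simeq g^{-1}$), we conclude that $\eta_{X^\vee}$ is an equivalence. Hence $X^\vee$ satisfies Condition \ref{cond 2}.

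I expect the only delicate point to be the careful extraction of the triangle identity in the $\infty$-categorical, self-adjoint setting, where one must keep track of the passage between $\C$ and $\C\op$ and of the variance of $(-)^\vee$. Once the identity $(\eta_X)^\vee \circ \eta_{X^\vee} \simeq \id_{X^\vee}$ is justified from the adjunction data of Proposition \ref{prop: linear dual is self-adjoint}, everything else is a direct two-out-of-three argument.
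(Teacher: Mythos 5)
Your proof is correct and follows essentially the same route as the paper: the paper's one-line argument writes the composite $X^\vee \to (X^\vee)^{\vee\vee} \simeq (X^{\vee\vee})^\vee \simeq X^\vee$ and implicitly invokes exactly the triangle identity $(\eta_X)^\vee \circ \eta_{X^\vee} \simeq \id_{X^\vee}$ that you spell out. Your version is simply a more careful rendering of the same two-out-of-three argument, making explicit the point the paper leaves unstated.
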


\begin{proof}
We have the equivalence:
\[
X^\vee \rightarrow (X^\vee)^{\vee\vee}\simeq (X^{\vee\vee})^\vee \simeq X^\vee,
\]
using Condition \ref{cond 2} on $X$.
\end{proof}

\begin{lem}\label{lem: cond 3}
Let $\C$ be a closed symmetric monoidal $\infty$-category. Suppose $X$ satisfies Conditions \ref{cond 1} and \ref{cond 2}. Then $X^\vee$ satisfies Condition \ref{cond 1} if and only if $X^{\ot n}$ satisfies Condition \ref{cond 2} for all $n\geq 0$.
\end{lem}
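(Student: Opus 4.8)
The plan is to fix an integer $n\geq 0$, compare at that stage the structure map of Condition \ref{cond 1} for the object $X^\vee$ with the unit map of Condition \ref{cond 2} for the object $X^{\ot n}$, and show that one is an equivalence exactly when the other is; ranging over all $n$ then gives the claimed equivalence of conditions. First I would write the Condition \ref{cond 1} map for $X^\vee$ at stage $n$ as
\[
\lambda_n \co ((X^\vee)^\vee)^{\ot n} \longrightarrow ((X^\vee)^{\ot n})^\vee .
\]
Condition \ref{cond 2} on $X$ makes the unit $\eta_X\co X\to X^{\vee\vee}=(X^\vee)^\vee$ an equivalence, so $(\eta_X)^{\ot n}$ identifies $X^{\ot n}$ with the source of $\lambda_n$. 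Condition \ref{cond 1} on $X$ makes the structure map $\psi\co (X^\vee)^{\ot n}\to (X^{\ot n})^\vee$ an equivalence, and applying the (contravariant) linear dual to $\psi$ produces an equivalence $\psi^\vee\co (X^{\ot n})^{\vee\vee}\to ((X^\vee)^{\ot n})^\vee$ identifying the double dual of $X^{\ot n}$ with the target of $\lambda_n$.

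With these two equivalences in hand, the lemma reduces to showing that the square
\[
\begin{tikzcd}
X^{\ot n} \ar{r}{\eta_{X^{\ot n}}} \ar{d}[swap]{(\eta_X)^{\ot n}} & (X^{\ot n})^{\vee\vee} \ar{d}{\psi^\vee} \\
((X^\vee)^\vee)^{\ot n} \ar{r}{\lambda_n} & ((X^\vee)^{\ot n})^\vee
\end{tikzcd}
\]
commutes, where $\eta_{X^{\ot n}}$ is the Condition \ref{cond 2} unit for $X^{\ot n}$. Since both vertical maps are equivalences, commutativity forces $\lambda_n$ to be an equivalence if and only if $\eta_{X^{\ot n}}$ is one. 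Taking this for every $n\geq 0$ shows that $X^\vee$ satisfies Condition \ref{cond 1} (all $\lambda_n$ equivalences) precisely when $X^{\ot n}$ satisfies Condition \ref{cond 2} for every $n$ (all $\eta_{X^{\ot n}}$ equivalences), which is the assertion.

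The main obstacle is the commutativity of that square, which I would establish by passing to adjoints along the tensor--hom adjunction $\C(A\ot B,\I)\simeq \C(A,B^\vee)$ defining $(-)^\vee$. By construction the lax symmetric monoidal structure map $\lambda_n$ is adjoint to the $n$-fold tensor of evaluation maps $X^{\vee\vee}\ot X^\vee\to \I$, braided into place by the symmetry, while the self-adjunction unit of Proposition \ref{prop: linear dual is self-adjoint} is adjoint to a single evaluation map. Transporting $\eta_{X^{\ot n}}$ along the equivalences coming from Conditions \ref{cond 1} and \ref{cond 2} on $X$ turns its adjoint evaluation $X^{\ot n}\ot(X^{\ot n})^\vee\to \I$ first into the braided tensor of evaluations $X^{\ot n}\ot (X^\vee)^{\ot n}\to \I$ (using that $\psi$, the Condition \ref{cond 1} map for $X$, is itself adjoint to such a tensor of evaluations) and then, via $(\eta_X)^{\ot n}$ together with the standard compatibility $\mathrm{ev}_{X^\vee}\circ(\eta_X\ot\id)=\mathrm{ev}_X$, into the tensor of evaluations $X^{\vee\vee}\ot X^\vee\to \I$ adjoint to $\lambda_n$. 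The two adjoints then agree, so the square commutes; the only delicate point is matching the symmetry isomorphisms across this reindexing, which is handled by the coherence of the symmetric monoidal structure and the naturality of the tensor--hom adjunction.
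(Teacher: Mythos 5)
Your proof is correct and follows essentially the same route as the paper: the paper's argument is exactly the commutative square you draw, with the left vertical map an equivalence by Condition \ref{cond 1}... rather, by Condition \ref{cond 2} on $X$ and the right vertical map the dual of the Condition \ref{cond 1} equivalence, so the top map is an equivalence if and only if the bottom one is. The only difference is that you supply the adjunction argument for why the square commutes, which the paper asserts without proof.
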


\begin{proof}
We have the following commutative diagram in $\C$: 
\[
\begin{tikzcd}
X^{\ot n} \ar{r} \ar{d}[swap]{\simeq} & {(X^{\ot n})}^{\vee\vee} \ar{d}{\simeq}\\
{(X^{\vee\vee})}^{\ot n} \ar{r} & {\left( (X^\vee)^{\ot n}\right)}^\vee.
\end{tikzcd}
\]
The left {vertical} map is an equivalence as $X$ satisfies Condition \ref{cond 2}. The right {vertical} map is an equivalence as $X$ satisfies Condition \ref{cond 1}. The result follows.
\end{proof}

\begin{defi}\label{def: quasi-dualizable}
Let $\C$ be a closed symmetric monoidal $\infty$-category. An object $X$ in $\C$ is said to be \emph{quasi-dualizable} if $X$ satisfies both Conditions \ref{cond 1} and \ref{cond 2}. It is called \emph{\XXX-dualizable}  if $X$ satisfies Condition \ref{cond 1}.
\end{defi}

\begin{cor}\label{cor dualizable satisfy cond 1 and 2}
Let $\C$ be a closed symmetric monoidal $\infty$-category. If an object in $\C$ is dualizable then it is quasi-dualizable in $\C$.
\end{cor}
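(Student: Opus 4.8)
The plan is to verify Conditions \ref{cond 1} and \ref{cond 2} separately for a dualizable object $X$, relying on two basic facts: that each tensor power $X^{\ot n}$ is again dualizable (the tensor product is closed on $\C_\dual$, as recorded after Proposition \ref{prop: antiequivalence between dualizable objects}), and that on $\C_\dual$ the abstract dual of an object coincides with its linear dual $X^\vee$ by Lemma \ref{lem: dualizable in closed monoidal cat}, together with the equivalence $[X, Y] \simeq X^\vee \ot Y$ extracted from the proof of that lemma.

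For Condition \ref{cond 2}, I would argue that since $X$ is dualizable with dual $X^\vee$, the evaluation and coevaluation maps simultaneously exhibit $X^\vee$ as dualizable with dual $X$; hence $X^{\vee\vee}\simeq X$. The remaining point is to identify the equivalence realizing this with the natural unit map $X\to X^{\vee\vee}$ of Proposition \ref{prop: linear dual is self-adjoint}. Since a morphism of $\C$ is an equivalence precisely when its image in the homotopy category $h\C$ is an isomorphism, and since dualizability together with all the maps in play (the unit, the evaluation, and the coevaluation) is already defined at the level of $h\C$ by \cite[4.6.1.6]{HA}, this reduces to the classical statement that the double-duality map of a dualizable object in an ordinary symmetric monoidal category is an isomorphism.

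For Condition \ref{cond 1}, I would induct on $n$, the cases $n=0,1$ being automatic. Using the tensor--hom adjunction and the equivalence $[X,-]\simeq X^\vee\ot-$ valid for dualizable $X$, one computes
\[
(X^{\ot n})^\vee = [X\ot X^{\ot(n-1)}, \I] \simeq [X, [X^{\ot(n-1)}, \I]] \simeq X^\vee \ot (X^{\ot(n-1)})^\vee \simeq (X^\vee)^{\ot n},
\]
the last step by the inductive hypothesis. As before, one must confirm that this chain of equivalences is the lax symmetric monoidal structure map named in Condition \ref{cond 1}, which is again checked on $h\C$.

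The main obstacle is not producing the equivalences --- these are forced by uniqueness of duals --- but rather matching the specific natural transformations named in the two conditions (the lax monoidal comparison map and the self-adjunction unit) with the equivalences furnished by dualizability. This coherence is automatic in an ordinary symmetric monoidal category, and in the $\infty$-categorical setting it is dispatched by passing to $h\C$, where dualizability and all the relevant maps are already defined.
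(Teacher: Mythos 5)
Your argument is correct, but it takes a more hands-on route than the paper, which disposes of the corollary in one line by citing Lemma \ref{lem: dualizable in closed monoidal cat} (the abstract dual of a dualizable object is its linear dual) together with Proposition \ref{prop: antiequivalence between dualizable objects} (Lurie's result that $(-)^\vee$ restricts to a symmetric monoidal equivalence $(\C_\dual)\op\stackrel{\simeq}\to\C_\dual$ that is its own inverse): strong monoidality of the restricted functor gives Condition \ref{cond 1} directly, and self-inverseness gives Condition \ref{cond 2}. What you do instead is re-derive the content of that proposition by verifying both conditions explicitly --- the double-duality map via uniqueness of duals, and the comparison map $(X^\vee)^{\ot n}\to (X^{\ot n})^\vee$ by induction using $[X,-]\simeq X^\vee\ot-$ --- and you correctly isolate the only real issue, namely that the conditions concern \emph{specific} natural maps (the lax monoidal structure map of $(-)^\vee$ and the unit of the self-adjunction of Proposition \ref{prop: linear dual is self-adjoint}) rather than mere abstract equivalence of objects. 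Your resolution, passing to $h\C$ where being an equivalence can be tested and where both maps descend to their classical counterparts in an ordinary closed symmetric monoidal category, is sound: dualizability is detected in $h\C$ by \cite[4.6.1.6]{HA}, and the classical computations with evaluation and coevaluation then apply. The trade-off is that your argument is self-contained and elementary but requires the (routine, though worth stating) compatibility of the $\infty$-categorical lax monoidal structure on $(-)^\vee$ with its image in $h\C$, whereas the paper's citation of \cite[3.2.4]{lurie2} packages exactly this coherence and makes the corollary immediate.
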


\begin{proof}
Apply Lemma \ref{lem: dualizable in closed monoidal cat} and Proposition \ref{prop: antiequivalence between dualizable objects}. 
\end{proof}

\begin{rem}\label{rem: different notions of dualizable}
Corollary \ref{cor dualizable satisfy cond 1 and 2} shows that we have the implications:
\[
\text{dualizable} \Rightarrow \text{quasi-dualizable} \Rightarrow \text{\XXX-dualizable}.
\]
Example \ref{ex hz is counterexample to cond 2} provides an instance of an object that is \XXX-dualizable but not quasi-dualizable; therefore, weak quasi-dualizability is a strictly weaker condition than quasi-dualizability. For a Noetherian ring $A$ with finite global dimension, we show in Section \ref{sec proof of cothh thh duality in chains} that a (co)connective finite type $HA$-module is quasi-dualizable. If such an $HA$-module has unbounded homotopy, then it is not dualizable due to Proposition \ref{prop dualizable ha modules}. This shows that quasi-dualizability is a strictly weaker condition than dualizability. For instance, $\hfp \wdg \hfp $ is quasi-dualizable but \emph{not} dualizable in $\hfp$-modules. 
\end{rem}

\begin{rem}\label{rem dualizable in k modules are finite dimensional}
The distinction made in the previous remark is not seen in the classical case.
Indeed, let $\C$ be the nerve the category of vector spaces over a field $k$.
We saw in Example \ref{ex: dualizable=fin dim in vector spaces} that a vector space is dualizable if and only if it is finite dimensional. In fact, the discussion in Example \ref{ex counter to cond 1} shows that infinite dimensional vector spaces cannot satisfy Condition \ref{cond 1} nor Condition \ref{cond 2}. Therefore, given a vector space $V$ over $k$, the following are equivalent:
\begin{enumerate}
    \item $V$ is finite dimensional,
    \item $V$ is dualizable,
    \item $V$ is quasi-dualizable,
    \item $V$ is weak quasi-dualizable, i.e. satisfies Condition \ref{cond 1},
    \item $V$ satisfies Condition \ref{cond 2}.
\end{enumerate}
\end{rem}

\begin{defi}\label{defi: weak-quasi proper algebra}
Let $\C$ be a closed symmetric monoidal $\infty$-category. Let $\O$ be an $\infty$-operad.  We say an $\O$-algebra $A$ in $\C$ is \emph{\XXX-proper} {if} $A$ is \XXX-dualizable as an object in $\C$. 
We denote by $(\alg_\O(\C))_\xdual$ the full subcategory in $\alg_\O(\C)$ spanned by the \XXX-proper algebras.
\end{defi}

We first show that a \XXX-dualizable algebra $A$ has the property that its linear dual $A^\vee$ has a natural induced coalgebra structure.

\begin{prop}\label{prop: cond 1 implies dual of algebra is a coalgebra}
Let $\C$ be a closed symmetric monoidal $\infty$-category.
Let $\O$ be an $\infty$-operad. Let $A$ be a \XXX-proper $\O$-algebra in $\C$, then $A^\vee$ is an $\O$-coalgebra in $\C$. More generally, the linear dual functor induces a functor:
\[
(-)^\vee: (\alg_\O(\C))_\xdual \longrightarrow (\coalg_\O(\C))\op.
\]
\end{prop}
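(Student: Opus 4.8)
The plan is to realize $(-)^\vee$ as a functor that is \emph{strong} monoidal on the part of $\C$ that $A$ sees, and then to use that strong monoidal functors carry algebras to algebras. First I would record that $(-)^\vee\colon\C\to\C\op$ is, by construction (Definition \ref{def duality functor}), the opposite of the lax symmetric monoidal functor $(-)^\vee\colon\C\op\to\C$; hence it is colax symmetric monoidal. Unwinding the opposite monoidal structure on $\C\op$, its colax structure maps, read as morphisms in $\C$, are precisely the maps $(X^\vee)^{\ot n}\to(X^{\ot n})^\vee$ appearing in Condition \ref{cond 1}. So Condition \ref{cond 1} is exactly the assertion that these colax structure maps are equivalences on the relevant objects.

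Now let $A$ be a \XXX-proper $\O$-algebra, so $A$ satisfies Condition \ref{cond 1}. Because the lax structure maps compose coherently, every structure map of $(-)^\vee$ between tensor powers of $A$ reduces to one of the Condition \ref{cond 1} maps of $A$; in particular each $A^{\ot n}$ again satisfies Condition \ref{cond 1}, and the case $n=0$ gives $\I^\vee\simeq\I$. Let $\C_A\subseteq\C$ be the full symmetric monoidal subcategory on the objects $A^{\ot n}$, $n\geq0$ (it contains $\I$ and is closed under $\ot$). By the previous sentence all colax structure maps of $(-)^\vee$ restricted to $\C_A$ are equivalences, so $(-)^\vee|_{\C_A}\colon\C_A\to\C\op$ is strong symmetric monoidal (cf. Remark \ref{rem: strong monoidal=strong comonoidal}). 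A strong symmetric monoidal functor induces a functor on $\O$-algebras for every $\infty$-operad $\O$; since $A$ is already an $\O$-algebra in $\C_A$, its image lands in $\alg_\O(\C\op)\simeq(\coalg_\O(\C))\op$ (Definition \ref{defi: coalgebras}) and has underlying object $A^\vee$. This is the sought $\O$-coalgebra structure: the comultiplication is the dual of the multiplication composed with the equivalence $(A\ot A)^\vee\simeq A^\vee\ot A^\vee$, and the counit is the dual of the unit.

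The hard part will be promoting this object-level construction to the functor $(-)^\vee\colon(\alg_\O(\C))_\xdual\to(\coalg_\O(\C))\op$. The obstruction is that the class of objects satisfying Condition \ref{cond 1} is \emph{not} closed under $\ot$: for distinct \XXX-proper algebras $A$ and $B$ there is no reason for $A^\vee\ot B^\vee\to(A\ot B)^\vee$ to be an equivalence, so one cannot simply restrict $(-)^\vee$ to a single monoidal subcategory of $\C$ containing all \XXX-proper algebras. The way I would get around this is to replay the argument in diagram categories: a morphism $f\colon A\to B$ of \XXX-proper algebras is exactly a \XXX-proper $\O$-algebra in $\Fun(\Delta^1,\C)$ with its pointwise symmetric monoidal structure, where the relevant duals and the maps of Condition \ref{cond 1} are computed pointwise and involve only the tensor powers $A^{\ot n}$ and $B^{\ot n}$, never the problematic mixed term $A\ot B$. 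Running the object-level construction inside $\Fun(\Delta^1,\C)$, and inside $\Fun(\Delta^m,\C)$ for the higher simplices, then yields the morphisms after invoking $\Fun(\Delta^m,\coalg_\O(\C))\simeq\coalg_\O(\Fun(\Delta^m,\C))$. Packaging these compatibly — that is, checking that the pointwise strong monoidal structures assemble into a single functor of $\infty$-categories rather than merely a map on objects and arrows — is the step requiring the most care, and is the main obstacle.
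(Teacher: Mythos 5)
Your treatment of the object-level claim is exactly the paper's: you form the full symmetric monoidal subcategory $\C_A$ on the tensor powers $A^{\ot n}$, observe that Condition \ref{cond 1} (together with its stability under passing to tensor powers of $A$, via the coherence of the lax structure maps) makes the restriction of $(-)^\vee$ to $\C_A$ strong symmetric monoidal, and then transport the $\O$-algebra structure along a strong monoidal functor into $\C\op$. This coincides with the paper's proof, which invokes \cite[2.2.1.2]{HA} for the monoidal structure on $\C_A$ and the dual of Lemma \ref{lem: lax monoidal becomes strong conditions} for strong monoidality.

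Where you diverge is the ``more generally'' clause. The paper disposes of functoriality in one sentence, by asserting that the whole construction ($\C_A$, the strong monoidal restriction, and the induced map on algebras) is natural in $A$. Your proposed substitute --- running the object-level argument inside $\Fun(\Delta^m,\C)$ with the pointwise symmetric monoidal structure --- does not work as literally stated, because the internal hom of the pointwise monoidal structure on a functor category is \emph{not} computed pointwise. For $\Fun(\Delta^1,\C)$ one finds that $[g,h]$ at the source vertex is the pullback $[B_0,C_0]\times_{[B_0,C_1]}[B_1,C_1]$; specializing to the unit $\id_\I$, the $\Fun(\Delta^1,\C)$-linear dual of an arrow $f\colon A\to B$ is the identity arrow on $B^\vee$, which forgets $A$ entirely and never produces the morphism $B^\vee\to A^\vee$ you need. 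So ``Condition \ref{cond 1} in $\Fun(\Delta^1,\C)$'' is not pointwise Condition \ref{cond 1}, and the object-level proposition applied to that closed symmetric monoidal $\infty$-category yields the wrong output. If what you actually intend is postcomposition with $(-)^\vee$ (the pointwise colax functor $\Fun(\Delta^1,\C)\to\Fun(\Delta^1,\C\op)$), then you are no longer invoking the already-proved case but rerunning the argument for a different colax functor, and the remaining task --- assembling the simplexwise constructions into a single functor of $\infty$-categories --- is exactly the statement to be proved; you explicitly leave it open. So the functorial half of the proposition is a genuine gap in your write-up: either repair it by arguing, as the paper does, that $A\mapsto\C_A$ and the restricted strong monoidal functor $(-)^\vee|_{\C_A}$ depend naturally on $A$, or find a formulation in which a single (co)lax monoidal functor on a fixed subcategory does all the work.
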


We prove the above proposition after showing the following lemma.

\begin{lem}\label{lem: lax monoidal becomes strong conditions}
Let $\C$ and $\D$ be symmetric monoidal $\infty$-categories. Let $\I_\C$ and $\I_\D$ be the unit objects of the monoidal structures of $\C$ and $\D$ respectively. Let $F:\C\rightarrow \D$ be a lax symmetric monoidal functor. If for all objects $X$ and $Y$ in $\C$, the natural maps $F(X)\ot F(Y)\rightarrow F(X\ot Y)$ are equivalences, and the natural map $\I_\D\rightarrow F(\I_\C)$ is an equivalence, then $F$ is a strong symmetric monoidal functor.
\end{lem}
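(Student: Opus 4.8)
The plan is to pass to the language of $\infty$-operads and reduce strong monoidality of $F$ to the assertion that all of its higher lax structure maps
\[
\mu_n\co F(X_1)\ot\cdots\ot F(X_n)\longrightarrow F(X_1\ot\cdots\ot X_n)
\]
are equivalences for every $n\geq 0$; the hypotheses are then exactly the cases $n=2$ and $n=0$, and the remaining cases follow by induction using associativity coherence. First I would recall from \cite[2.1.3.7]{HA} that, writing $p\co \Co\to N(\Fin_*)$ and $q\co\D^\ot\to N(\Fin_*)$ for the coCartesian fibrations encoding the two symmetric monoidal structures, a lax symmetric monoidal $F$ is a map of $\infty$-operads $F^\ot\co\Co\to\D^\ot$ over $N(\Fin_*)$, and that $F$ is strong symmetric monoidal precisely when $F^\ot$ carries $p$-coCartesian edges to $q$-coCartesian edges. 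Since $F^\ot$ is a map of $\infty$-operads it automatically preserves inert edges; invoking the inert/active factorization system on $N(\Fin_*)$ together with the Segal condition, every $p$-coCartesian edge is assembled from inert edges and from coCartesian edges lying over the active morphisms $\langle n\rangle\to\langle 1\rangle$. Hence it suffices to check that $F^\ot$ preserves coCartesian edges over each such active morphism.

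Next I would identify these comparison maps with the $\mu_n$. An object of $\Co$ over $\langle n\rangle$ is, via the inert Segal maps, an $n$-tuple $(X_1,\dots,X_n)$, and the $p$-coCartesian edge over $\langle n\rangle\to\langle 1\rangle$ realizes $X_1\ot\cdots\ot X_n$ in the fiber $\C$. Because $F^\ot$ preserves inert edges it sends $(X_1,\dots,X_n)$ to $(FX_1,\dots,FX_n)$, so the discrepancy between $F^\ot$ applied to this coCartesian edge and the genuine coCartesian edge of $\D^\ot$ is measured exactly by $\mu_n$. Thus $F$ is strong monoidal if and only if $\mu_n$ is an equivalence for all $n\geq 0$, where $n=0$ is the unit map $\I_\D\to F(\I_\C)$ and $n=1$ is the identity.

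It then remains to run the induction. The case $n=0$ is a hypothesis, $n=1$ is trivial, and $n=2$ is the other hypothesis. For $n\geq 3$, factoring the active map $\langle n\rangle\to\langle 1\rangle$ through $\langle 2\rangle$ and using the associativity coherence carried by $F^\ot$ exhibits $\mu_n$ as the composite $F(X_1)\ot\cdots\ot F(X_n)\xrightarrow{\mu_{n-1}\ot\,\id} F(X_1\ot\cdots\ot X_{n-1})\ot F(X_n)\xrightarrow{\mu_2} F(X_1\ot\cdots\ot X_n)$. Since $\ot$ preserves equivalences, $\mu_{n-1}\ot\id$ is an equivalence by the inductive hypothesis and $\mu_2$ is an equivalence by assumption, so $\mu_n$ is an equivalence; this closes the induction and proves $F$ strong symmetric monoidal.

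I expect the main obstacle to be making the coherence step precise in the $\infty$-categorical setting: the decomposition $\mu_n\simeq\mu_2\circ(\mu_{n-1}\ot\id)$ is not a one-line diagram chase but a statement about the compatibility of coCartesian lifts along the factorization $\langle n\rangle\to\langle 2\rangle\to\langle 1\rangle$ of active morphisms in $N(\Fin_*)$, which must be extracted from the functoriality of the $\infty$-operad map $F^\ot$ rather than drawn by hand. Everything else is routine bookkeeping with the inert/active factorization and the Segal condition.
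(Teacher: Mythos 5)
Your proof is correct and follows essentially the same route as the paper's: reduce strong monoidality to preservation of coCartesian edges, note that the hypotheses handle the lifts over the generating active morphisms $\langle 2\rangle\to\langle 1\rangle$ and $\langle 0\rangle\to\langle 1\rangle$, and propagate to all morphisms of $\Fin_*$ via the inert/active factorization and the Segal condition. The only cosmetic difference is that you phrase the final step as an induction on $n$ through the factorization $\langle n\rangle\to\langle 2\rangle\to\langle 1\rangle$, whereas the paper states it as generation of all morphisms under wedge sum, composition, inert maps, $\iota$ and $m$, with the wedge-sum compatibility supplied by a Segal-condition square — these are the same argument.
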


\begin{proof}
Let $p:\C^\ot\rightarrow N(\Fin_*)$ and $q:\D^\ot\rightarrow N(\Fin_*)$ be the coCartesian fibrations that endow $\C$ and $\D$ of a symmetric monoidal structure. We need to show that $F$ sends $p$-coCartesian lifts to $q$-coCartesian lifts.

We first recall how we obtain the natural maps $F(X)\ot F(Y)\rightarrow F(X\ot Y)$.
Let $m:\langle 2\rangle\rightarrow \langle 1\rangle$ be the map in $\Fin_*$ that sends both $1$ and $2$ to $1$. Let $X$ and $Y$ be objects in $\C$. On the one hand, we have the $p$-coCartesian lift of $m$ with respect to $(X,Y)$ given by the edge $(X,Y)\rightarrow X\otimes Y$ in $\C^\ot$ that represents the tensor product:
\[
\begin{tikzcd}
\C\times \C & \C_{\langle 2 \rangle}^\otimes \ar{l}[swap]{\simeq} \ar{r}{m_!} & \C.
\end{tikzcd}
\]
Similarly, we obtain the $q$-coCartesian lift $(F(X), F(Y))\rightarrow F(X)\otimes F(Y)$ in $\D^\ot$. On the other hand, since $m$ is not an inert morphism in $\Fin_*$, then the induced map $F(X,Y)\rightarrow F(X\otimes Y)$ is not a priori a $q$-coCartesian lift. However, since the Segal condition $\C^\ot_{\langle n \rangle}\simeq \C^{\times n}$ is induced by the inert morphisms $\langle n \rangle \rightarrow \langle 1 \rangle$, and  $F$ is lax symmetric monoidal, we obtain an equivalence $F(X, Y)\stackrel{\simeq}\rightarrow (F(X), F(Y))$. 
By the universal property of coCartesian lifts, we obtain a unique map up to contractible choice $F(X)\otimes F(Y)\rightarrow F(X\otimes Y)$ such that we have the commutative diagram:
\[
\begin{tikzcd}
(F(X), F(Y)) \ar{r} \ar{dr} & F(X)\ot F(Y) \ar[dashed]{d}\\
& F(X \ot Y).
\end{tikzcd}
\]
If we assume that the dashed map is an equivalence, then by unicity of the coCartesian lifts, we see that $F$ sends $p$-coCartesian lifts of $m$ to $q$-coCartesian lifts.

Similarly, the unique map $\iota:\langle 0\rangle \rightarrow \langle 1 \rangle $ in $\Fin_*$ induces the map $\I_\D\rightarrow F(\I_\C)$, and if we require it to be an equivalence, $F$ sends $p$-coCartesian lifts of $\iota$ to a $q$-coCartesian lift.

Notice that  morphisms in $\Fin_*$ are generated under wedge sum, composition, inert morphisms and the maps $\iota$ and $m$. If $\phi:\langle n\rangle \rightarrow \langle k \rangle $ and $\phi':\langle n' \rangle \rightarrow \langle k' \rangle $ are maps in $\Fin_*$, then by the Segal condition and the universality of coCartesian lifts, we obtain an equivalence: 
\[
\begin{tikzcd}
\C_{\langle n + n'\rangle} \ar{r}{(\phi \vee \phi')_!} \ar{d}[swap]{\simeq} & \C_{\langle k + k' \rangle}\ar{d}{\simeq}\\
\C_{\langle n \rangle}\times \C_{\langle n' \rangle} \ar{r}{\phi_! \times \phi'_!} & \C_{\langle k \rangle}\times \C_{\langle k' \rangle}.
\end{tikzcd}
\]
Therefore $F$ sends $p$-coCartesian lifts to $q$-coCartesian lifts. 
\end{proof}

\begin{proof}[Proof of Proposition \ref{prop: cond 1 implies dual of algebra is a coalgebra}]
For any object $X$ in $\C$, let us denote $\C_X$ the full subcategory of $\C$ spanned by objects equivalent to $X^{\ot n}$ for any $n\geq 0$. By \cite[2.2.1.2]{HA}, since $\C_X$ is closed under the tensor product and contains the unit,  the $\infty$-category $\C_X$ inherits a symmetric monoidal structure from $\C$.

Suppose $X$ is weak quasi-dualizable. Then the composition
\[
\begin{tikzcd}
\C_X\ar[hook]{r} & \C\ar{r}{(-)^\vee} & \C\op
\end{tikzcd}
\]
is strong symmetric monoidal, by applying the dual of Lemma \ref{lem: lax monoidal becomes strong conditions} to the colax symmetric monoidal functor $(-)^\vee:\C\rightarrow \C\op$. 

If $X$ is an $\O$-algebra in $\C$, then $X$ is also an $\O$-algebra in $\C_X$ as the inclusion $\C_X\subseteq \C$ is strong symmetric monoidal. 

Therefore, if we let $A$ be a weak quasi-proper $\O$-algebra in $\C$, we apply the discussion above to $\C_A$ and we obtain that $A^\vee$ is an $\O$-coalgebra in $\C$. The coalgebra structure is completely natural from the choice of algebra structure on $A$ and thus we obtain the desired functor.
\end{proof}

Now that we have obtained conditions that ensure the linear dual of an algebra is a coalgebra, we restrict further the conditions to show that the assignment is an equivalence. 

\begin{defi}\label{def quasi proper}
Let $\C$ be a closed symmetric monoidal $\infty$-category. Let $\O$ be an $\infty$-operad. \begin{enumerate}
    \item We say an $\O$-algebra in $\C$ is \emph{quasi-proper} if $A$ is quasi-dualizable as an object in $\C$. We denote by $(\alg_\O(\C))_\qdual$ the full subcategory in $\alg_\O(\C)$ spanned by the quasi-proper algebras.
    
    \item We say an $\O$-coalgebra $C$ is \emph{quasi-proper} if $C^\vee$ satisfies Condition \ref{cond 1} as an object in $\C$, and $C$ satisfies Condition \ref{cond 2} as an object in $\C$. We denote by $(\coalg_\O(\C))_\qdual$ the full subcategory in $\coalg_\O(\C)$ spanned by the quasi-proper coalgebras.
\end{enumerate}
\end{defi}

\begin{rem} 
From Remark \ref{rem: different notions of dualizable}, we obtain the following sequence of subcategories in $\alg_\O(\C)$:
\[\alg_\O(\C_\dual)\subseteq (\alg_\O(\C))_\qdual \subseteq (\alg_\O(\C))_\xdual.\]
\end{rem}

\begin{rem}
We do not define the notion of \XXX-proper coalgebra as the linear dual of a coalgebra is always an algebra since the linear dual functor $(-)^\vee:\C\rightarrow \C\op$ is colax symmetric monoidal. We have defined the notion of quasi-proper coalgebra so that it is exactly the {essential} image of the linear dual functor defined in Proposition \ref{prop: cond 1 implies dual of algebra is a coalgebra} restricted to quasi-proper algebras, as we show in Theorem \ref{thm: anti-equivalence between quasi-proper (co)algebras} below.
\end{rem}

\begin{prop} \label{prop cond 1 2 and cond 2 on c ot c imply quasi-proper}
Let $\C$ be a closed symmetric monoidal $\infty$-category. Let $\O$ be an $\infty$-operad. Let $C$ be an $\O$-coalgebra in $\C$.
If $C$ satisfies Conditions \ref{cond 1} and \ref{cond 2} as an object in $\C$, and $C^{\otimes n}$ satisfies Condition \ref{cond 2} as an object in $\C$ for all $n\geq 0$, then $C$ is a quasi-proper coalgebra in $\C$. 
\end{prop}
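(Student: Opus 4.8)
The plan is to unwind the definition of quasi-proper coalgebra (Definition \ref{def quasi proper}) and observe that both of its requirements are already available from the hypotheses. Recall that $C$ is a quasi-proper coalgebra precisely when $C^\vee$ satisfies Condition \ref{cond 1} and $C$ satisfies Condition \ref{cond 2}, both as objects in $\C$. So the task splits into verifying these two statements.

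The second requirement is immediate, since it is exactly one of our standing hypotheses, namely that $C$ satisfies Condition \ref{cond 2}. No work is needed here.

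For the first requirement, that $C^\vee$ satisfies Condition \ref{cond 1}, I would invoke Lemma \ref{lem: cond 3} with $X = C$. The hypotheses of that lemma hold because $C$ satisfies both Conditions \ref{cond 1} and \ref{cond 2} by assumption. The lemma then asserts that $C^\vee$ satisfies Condition \ref{cond 1} if and only if $C^{\ot n}$ satisfies Condition \ref{cond 2} for all $n\geq 0$. Since the latter is precisely our remaining hypothesis, I conclude that $C^\vee$ satisfies Condition \ref{cond 1}.

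Combining the two verifications yields that $C$ is a quasi-proper coalgebra. There is no genuine obstacle here: the only nontrivial input, the equivalence in Lemma \ref{lem: cond 3}, has already been established via the commutative square relating $C^{\ot n}$, $(C^{\ot n})^{\vee\vee}$, $(C^{\vee\vee})^{\ot n}$, and $\left((C^\vee)^{\ot n}\right)^\vee$, in which the left vertical map is an equivalence by Condition \ref{cond 2} on $C$ and the right vertical map by Condition \ref{cond 1} on $C$. Thus the proposition is essentially a repackaging of Lemma \ref{lem: cond 3} together with the definition of quasi-properness, and the proof is short.
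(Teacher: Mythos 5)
Your proposal is correct and follows exactly the paper's own argument: the paper likewise observes that the only thing to check is that $C^\vee$ satisfies Condition \ref{cond 1}, and deduces it from Lemma \ref{lem: cond 3} applied to $X=C$ together with the hypothesis that $C^{\ot n}$ satisfies Condition \ref{cond 2} for all $n\geq 0$. No gaps.
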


\begin{proof}
We only need to check that $C^\vee$ satisfies Condition \ref{cond 1}, but this follows from Lemma \ref{lem: cond 3}.
\end{proof}

\begin{thm}\label{thm: anti-equivalence between quasi-proper (co)algebras}
Let $\C$ be a closed symmetric monoidal $\infty$-category. Let $\O$ be an $\infty$-operad. Then the linear dual functor induces an equivalence of $\infty$-categories
\[
(-)^\vee: ((\coalg_\O(\C))_\qdual)\op \stackrel{\simeq}\longrightarrow (\alg_\O(\C))_\qdual
\]
{where the inverse equivalence is also given by the linear dual functor $(-)^\vee$.}
\end{thm}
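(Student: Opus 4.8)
The plan is to produce two functors, verify they have the claimed sources and targets, and then exhibit them as mutually inverse via the self-adjunction unit of Proposition \ref{prop: linear dual is self-adjoint}, where Condition \ref{cond 2} forces that unit to be an equivalence.

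First I would record the two candidate functors. Since a quasi-proper algebra $A$ satisfies Condition \ref{cond 1} in particular, it is weak quasi-proper, so Proposition \ref{prop: cond 1 implies dual of algebra is a coalgebra} already produces a functor $(-)^\vee\co (\alg_\O(\C))_\qdual \to (\coalg_\O(\C))\op$. On the other side, the dual of any coalgebra is an algebra, so restricting the always-defined functor $(-)^\vee\co (\coalg_\O(\C))\op \to \alg_\O(\C)$ gives the candidate inverse. The first verification is that these land in the right subcategories. For $A$ quasi-proper, $A^\vee$ satisfies Condition \ref{cond 2} by Lemma \ref{lem: cond 2 preserved under dual}, while $(A^\vee)^\vee = A^{\vee\vee} \simeq A$ (Condition \ref{cond 2} on $A$) satisfies Condition \ref{cond 1} because $A$ does and Condition \ref{cond 1} is equivalence-invariant; by Definition \ref{def quasi proper} this says exactly that $A^\vee$ is a quasi-proper coalgebra. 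Dually, for $C$ quasi-proper, $C^\vee$ satisfies Condition \ref{cond 1} by definition and Condition \ref{cond 2} by Lemma \ref{lem: cond 2 preserved under dual}, so $C^\vee$ is quasi-dualizable, i.e. a quasi-proper algebra. It is important here that I do \emph{not} need $A^\vee$ itself to satisfy Condition \ref{cond 1}, which by Lemma \ref{lem: cond 3} would require Condition \ref{cond 2} on every tensor power of $A$; this is precisely why Definition \ref{def quasi proper} phrases quasi-properness of a coalgebra asymmetrically.

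Next I would show the composites are the double dual and identify them with the identity. Both round-trips send an object to $X \mapsto X^{\vee\vee}$, and the natural transformation $\id \to (-)^{\vee\vee}$ is exactly the unit $\eta$ of the self-adjunction of Proposition \ref{prop: linear dual is self-adjoint}. By Condition \ref{cond 2}, the map $\eta_A\co A \to A^{\vee\vee}$ is an equivalence for every quasi-proper algebra $A$, and $\eta_C$ is an equivalence for every quasi-proper coalgebra $C$; the compatibility of the two restrictions (equivalently, that the counit is also an equivalence) is supplied by Lemma \ref{lem: cond 2 preserved under dual}, since the dual of a Condition \ref{cond 2} object again satisfies Condition \ref{cond 2}. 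Thus on underlying objects both composites are equivalent to the identity.

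The hard part will be promoting these object-wise equivalences to a natural equivalence of functors at the level of $\alg_\O$ and $\coalg_\O$, i.e. checking that $\eta$ is compatible with the (co)algebra structures. Concretely, the multiplication transported onto $A^{\vee\vee}$ is the double dual of the comultiplication on $A^\vee$ (itself the dual of the multiplication of $A$), and one must verify that $\eta_A$ intertwines this with the original multiplication of $A$. Unlike the dualizable case (Proposition \ref{prop: antiequivalence between dualizable objects} and Corollary \ref{cor: antiequivalence proper (co)algebras}), I cannot simply invoke a symmetric monoidal equivalence of full subcategories of $\C$, since quasi-dualizable objects need not be closed under the tensor product and $(-)^\vee$ need not restrict to a symmetric monoidal equivalence on tensor powers. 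Instead I would argue that the self-adjunction of Proposition \ref{prop: linear dual is self-adjoint} is a monoidal (doctrinal) adjunction between the lax functor $(-)^\vee\co\C\op\to\C$ and the colax functor $(-)^\vee\co\C\to\C\op$, so that its unit $\eta$ is automatically compatible with the lax and colax structure maps through the triangle identities; this lets $\eta$ lift to a natural transformation of the induced functors on $\alg_\O$ and $\coalg_\O$. Since the forgetful functors $\alg_\O(\C)\to\C$ and $\coalg_\O(\C)\to\C$ are conservative and the underlying transformation is the equivalence $\eta$ on quasi-proper objects, the lifted transformations are natural equivalences, exhibiting the two functors as mutually inverse and completing the proof.
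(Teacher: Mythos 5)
Your first two paragraphs track the paper's proof closely: the same two candidate functors, the same verification (via Lemma \ref{lem: cond 2 preserved under dual} and the equivalence-invariance of Condition \ref{cond 1} applied to $A^{\vee\vee}\simeq A$) that they restrict and corestrict correctly, and the same identification of the round trips with the double dual, made into an equivalence by Condition \ref{cond 2}. Your observation that quasi-properness of a coalgebra is deliberately phrased in terms of Condition \ref{cond 1} on $C^\vee$ rather than on $C$ is exactly the point of Definition \ref{def quasi proper}.

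The gap is in your last paragraph, and it sits precisely at the step you correctly flag as the hard one. The double dual $(-)^{\vee\vee}\co\C\to\C$ is a lax functor composed with a colax functor; on all of $\C$ it carries neither a lax nor a colax symmetric monoidal structure, so it does not induce an endofunctor of $\alg_\O(\C)$, and the phrase ``natural transformation of the induced functors on $\alg_\O$'' has no meaning at that level of generality. Doctrinal adjunction does not rescue this: an adjunction whose right adjoint is lax monoidal lifts to categories of algebras only when the mate colax structure on the left adjoint is \emph{strong}, and $(-)^\vee\co\C\to\C\op$ is not strong monoidal on $\C$. Conservativity of the forgetful functor also cannot help, since it upgrades an algebra map with invertible underlying map to an equivalence but cannot manufacture the algebra map in the first place. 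Moreover, you explicitly reject the move that repairs this, on the grounds that quasi-dualizable objects are not closed under $\ot$. The paper does not restrict to all quasi-dualizable objects: for a single quasi-proper algebra $A$ it restricts to $\C_A$, the full subcategory spanned by the tensor powers $A^{\ot n}$, which \emph{is} closed under $\ot$ and contains the unit, and on which Condition \ref{cond 1} says precisely that $(-)^\vee$ is strong symmetric monoidal with essential image $(\C_{A^\vee})\op$ (this is the content of Proposition \ref{prop: cond 1 implies dual of algebra is a coalgebra}, which you already invoked for the existence of the functor). The self-adjunction of Proposition \ref{prop: linear dual is self-adjoint} then restricts to an adjunction $\C_A\rightleftarrows(\C_{A^\vee})\op$ whose left adjoint is strong, so it lifts to an adjunction on $\alg_\O$, and the unit $A\to A^{\vee\vee}$ of \emph{that} adjunction is a map of $\O$-algebras; \cite[3.2.2.6]{HA} then makes it an equivalence of algebras. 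Your argument needs this localization to one object's tensor powers; without it the compatibility of $\eta_A$ with the multiplications is asserted rather than proved.
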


\begin{proof}
We first begin to show that the linear dual functor
\[
(-)^\vee :\coalg_\O(\C)\op \rightarrow \alg_\O(\C),
\]
restricts and corestricts to the  functor:
\[
(-)^\vee: ((\coalg_\O(\C))_\qdual)\op \longrightarrow (\alg_\O(\C))_\qdual.
\]
Let $C$ be a quasi-proper $\O$-coalgebra in $\C$. Then by assumption, its linear dual $C^\vee$ satisfies Condition \ref{cond 1} as an object {in} $\C$. Moreover, it also satisfies Condition \ref{cond 2} by Lemma \ref{lem: cond 2 preserved under dual}. Thus $C^\vee$ is a quasi-proper $\O$-algebra in $\C$. 

We now provide its inverse. In Proposition \ref{prop: cond 1 implies dual of algebra is a coalgebra}, we have shown that the linear dual functor lifts to a functor
\[
(-)^\vee: (\alg_\O(\C))_\xdual \longrightarrow (\coalg_\O(\C))\op.
\]
We now show it restricts and corestricts to:
\[
(-)^\vee: (\alg_\O(\C))_\qdual \longrightarrow ((\coalg_\O(\C))_\qdual)\op.
\]
We need to verify that for $A$ a quasi-proper $\O$-algebra in $\C$, then $A^\vee$ is a quasi-proper $\O$-coalgebra in $\C$. Since $A$ satisfies Condition \ref{cond 2} as an object in $\C$, then so does $A^\vee$ by Lemma \ref{lem: cond 2 preserved under dual}. Since $A$ satisfies Condition \ref{cond 1} and that $A^{\vee\vee}\simeq A$, then $A^{\vee\vee}$ satisfies Condition \ref{cond 1} as an object of $\C$. Thus $A^\vee$ is indeed a quasi-proper $\O$-coalgebra.

We now argue that given a quasi-proper $\O$-algebra $A$, the unit map $A\rightarrow A^{\vee\vee}$ in $\C$ of the adjunction of Proposition \ref{prop: linear dual is self-adjoint} is a map in $\alg_\O(\C)$.
If we let $X$ be a quasi-dualizable object in $\C$, then as in the proof of Proposition \ref{prop: cond 1 implies dual of algebra is a coalgebra}, the linear dual functor restricts to a strong monoidal functor $\C_X\rightarrow \C\op$. Notice that its image is precisely $(\C_{X^\vee})\op$. 
Restricting the linear dual functor $\C\op\rightarrow \C$ to $(\C_{X^\vee})\op$, we obtain a lax symmetric monoidal functor $(\C_{X^\vee})\op\rightarrow (\C_X)$ as we have the equivalence of symmetric monoidal $\infty$-categories $\C_X\simeq \C_{X^{\vee\vee}}$ by \cite[2.1.3.8]{HA} as $X$ satisfies Condition \ref{cond 2}.
Therefore, the adjunction of Proposition \ref{prop: linear dual is self-adjoint} restricts and corestricts to the full subcategories:
\[
\begin{tikzcd}[column sep= large]
\C_X \ar[shift left=2]{r}{(-)^\vee}[swap]{\perp} & (\C_{X^\vee})\op. \ar[shift left=2]{l}{(-)^\vee}
\end{tikzcd}
\]
The left adjoint is strong symmetric monoidal and thus lifts to an adjunction:
\[
\begin{tikzcd}[column sep= large]
\alg_\O(\C_X) \ar[shift left=2]{r}{(-)^\vee}[swap]{\perp} & \alg_\O\left((\C_{X^\vee})\op\right). \ar[shift left=2]{l}{(-)^\vee}
\end{tikzcd}
\]
If we apply the discussion above to a quasi-proper algebra $X=A$, we obtain in particular that the unit of the above adjunction $A\rightarrow A^{\vee\vee}$ is a map of $\O$-algebras in $\C$. We can argue similarly to show that $C\rightarrow C^{\vee\vee}$ is a map of $\O$-coalgebras for $C$ a quasi-proper $\O$-coalgebra.

We can therefore lift the adjunction of Proposition \ref{prop: linear dual is self-adjoint} to an adjunction:
\[
\begin{tikzcd}[column sep= large]
(\alg_\O(\C))_\qdual \ar[shift left=2]{r}{(-)^\vee}[swap]{\perp} & ((\coalg_\O(\C))_\qdual)\op. \ar[shift left=2]{l}{(-)^\vee}
\end{tikzcd}
\]
By \cite[3.2.2.6]{HA}, if $A$ is a quasi-proper $\O$-algebra in $\C$, then unit map of the adjunction $A\stackrel{\simeq}\longrightarrow A^{\vee\vee}$ is an equivalence of $\O$-algebras in $\C$. Similarly if $C$ is a quasi-proper $\O$-coalgebra in $\C$, then the map $C\stackrel{\simeq}\longrightarrow C^{\vee\vee}$ is an equivalence of $\O$-coalgebras in $\C$.
Therefore the unit and counit of the adjunction are equivalences. Thus the adjunction is an equivalence of $\infty$-categories.
\end{proof}

\begin{rem}
In \cite[3.9]{coalgenr}, it was shown that for $\C$ a presentably symmetric monoidal $\infty$-category, the linear dual functor $(-)^\vee:\coalg_\O(\C)\op\rightarrow \alg_\O(\C)$ is a right adjoint, for any essentially small $\infty$-operad $\O$. Its left adjoint: \[(-)^\circ:\alg_\O(\C)\rightarrow \coalg_\O(\C)\op,\] is called the \emph{finite dual}. Our arguments above show that $A^\circ\simeq A^\vee$, as $\O$-coalgebras in $\C$, for any weak quasi-dualizable $\O$-algebra $A$ in $\C$. This follows by unicity of left adjoint functors.
\end{rem}

\section{Duality between THH and coTHH} \label{sec general duality between thh and cothh}

We show here one of our main results: there is a duality between topological Hochschild homology and topological coHochschild homology. Recall that we have introduced the notion of weak quasi-proper algebra in Definition \ref{defi: weak-quasi proper algebra} and quasi-proper coalgebra in Definition \ref{def quasi proper}.

\begin{thm} \label{thm most general thh cothh duality}
Let $\C$ be a closed symmetric monoidal $\infty$-category that admits geometric realizations and totalizations.
\begin{enumerate}[label=\upshape\textbf{(\roman*)}]
\item\label{item: main thm algebra part} Let $A$ be a \XXX-proper $\ee$-algebra in $\C$. Then:
\[
\Big(\thh^\C(A)\Big)^\vee \simeq \cothh^\C(A^\vee).
\]
\item\label{item: main thm coalgebra part} Let $C$ be a quasi-proper $\ee$-coalgebra in $\C$. Then:
\[
\cothh^\C(C)\simeq \Big( \thh^\C(C^\vee) \Big)^\vee.
\]
\end{enumerate}
\end{thm}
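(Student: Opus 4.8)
The plan is to prove part \ref{item: main thm algebra part} directly from the constructions of $\thh$ and $\cothh$, and then deduce part \ref{item: main thm coalgebra part} from it by applying it to the algebra $C^\vee$. For \ref{item: main thm algebra part}, let $A$ be a \XXX-proper $\ee$-algebra and let $\C_A\subseteq \C$ be the full symmetric monoidal subcategory spanned by the tensor powers $A^{\ot n}$, as in the proof of Proposition \ref{prop: cond 1 implies dual of algebra is a coalgebra}. The crucial point established there is that, because $A$ satisfies Condition \ref{cond 1}, the restriction of the linear dual to $(-)^\vee\co \C_A\rightarrow \C\op$ is \emph{strong} symmetric monoidal, not merely colax. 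First I would apply Lemma \ref{lem: strong monoidal preserves cyclic bar} to this strong symmetric monoidal functor. Since the inclusion $\C_A\hookrightarrow \C$ is itself strong monoidal we have $\bar^{\C_A}(A)\simeq\bar^\C(A)$, and Lemma \ref{lem: strong monoidal preserves cyclic bar} then yields an equivalence of simplicial objects in $\C\op$:
\[
\left(\bar^\C(A)\right)^\vee \simeq \bar^{\C\op}(A^\vee).
\]
By the very definition of the cyclic cobar construction as the opposite in $\C\op$ of the cyclic bar construction, the right-hand side, read as a cosimplicial object in $\C$, is exactly $\cobar_\C(A^\vee)$, where $A^\vee$ is the $\ee$-coalgebra produced by Proposition \ref{prop: cond 1 implies dual of algebra is a coalgebra}. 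Thus the termwise dual of the cyclic bar object of $A$ agrees, as a full cosimplicial object, with the cyclic cobar object of $A^\vee$.

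The second step is to commute the dual past the geometric realization. By Proposition \ref{prop: linear dual is self-adjoint}, the functor $(-)^\vee\co \C\rightarrow \C\op$ is a \emph{left} adjoint, hence preserves colimits. Since $\thh^\C(A)$ is the geometric realization $\operatorname*{colim}_{N(\Delta\op)}\bar^\C(A)$ in $\C$, this gives
\[
\left(\thh^\C(A)\right)^\vee \simeq \operatorname*{colim}_{N(\Delta\op)}\left(\bar^\C(A)\right)^\vee,
\]
where the colimit is now taken in $\C\op$. A colimit over $N(\Delta\op)$ in $\C\op$ is precisely a limit over $N(\Delta)$ in $\C$, i.e.\ a totalization; combined with the first step this identifies the right-hand side with the totalization of $\cobar_\C(A^\vee)$, which is $\cothh^\C(A^\vee)$ by Definition \ref{defi: cothh}. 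This establishes \ref{item: main thm algebra part}.

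For part \ref{item: main thm coalgebra part}, let $C$ be a quasi-proper $\ee$-coalgebra. By Definition \ref{def quasi proper} the object $C^\vee$ satisfies Condition \ref{cond 1}, so $C^\vee$ is a \XXX-proper $\ee$-algebra, and part \ref{item: main thm algebra part} applies to it to give $\left(\thh^\C(C^\vee)\right)^\vee \simeq \cothh^\C(C^{\vee\vee})$. Since $C$ satisfies Condition \ref{cond 2}, Theorem \ref{thm: anti-equivalence between quasi-proper (co)algebras} provides an equivalence of $\ee$-coalgebras $C\simeq C^{\vee\vee}$, whence $\cothh^\C(C^{\vee\vee})\simeq \cothh^\C(C)$ and the claim follows.

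The main obstacle I anticipate is the bookkeeping in the opposite category in the first step: one must check that the termwise dual is equivalent to the cobar object as a diagram over $N(\Delta)$, with all coface and codegeneracy maps, and not merely levelwise. This is exactly what forces the use of Condition \ref{cond 1}, since the linear dual is only colax symmetric monoidal in general and Lemma \ref{lem: strong monoidal preserves cyclic bar} requires a strong monoidal functor, which becomes available only after restricting to $\C_A$, where Condition \ref{cond 1} upgrades colax to strong. A secondary subtlety is keeping straight which instance of $(-)^\vee$ is the left adjoint, so that the colimit defining $\thh$ is correctly converted into the limit defining $\cothh$.
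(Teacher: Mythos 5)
Your proposal is correct and follows essentially the same route as the paper: the paper isolates your first step as Lemma \ref{lem: bar=cobar on linear dual} (restricting to $\C_A$ where Condition \ref{cond 1} makes the linear dual strong monoidal, then invoking Lemma \ref{lem: strong monoidal preserves cyclic bar}), and then passes the dual through the realization using the self-adjunction of Proposition \ref{prop: linear dual is self-adjoint}, exactly as you do. The only cosmetic difference is that you phrase the limit/colimit exchange via the left adjoint $(-)^\vee\co\C\to\C\op$ preserving colimits, while the paper phrases it via the right adjoint $(-)^\vee\co\C\op\to\C$ preserving limits; these are the same fact.
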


We shall prove the above theorem at the end of the section. We first make some observations.

\begin{rem} Even for $C$ a proper $\ee$-coalgebra, we have examples where:
\[
\Big(\cothh^\C(C)\Big)^\vee\not\simeq  \thh^\C(C^\vee),
\]
as shown in Remark \ref{rem: counter-example for dual of cothh}.
\end{rem}

\begin{rem} Let $A$ be a weak quasi-proper $\ee$-algebra in $\C$. Combining \ref{item: main thm algebra part} of Theorem \ref{thm most general thh cothh duality} above with the unit of the adjunction of Proposition \ref{prop: linear dual is self-adjoint}, we obtain a natural map in $A$, dashed in the diagram below:
\[
\begin{tikzcd}
\thh^\C(A) \ar{r} \ar[dashed]{dr} & \Big(\thh^\C(A)\Big)^{\vee\vee}\ar{d}{\simeq}\\
& \Big(\cothh^\C(A^\vee)\Big)^\vee.
\end{tikzcd}
\]
Similarly, let $C$ be a quasi-proper $\ee$-coalgebra in $\C$. Combining \ref{item: main thm coalgebra part} of Theorem \ref{thm most general thh cothh duality} above with the unit of the adjunction of Proposition \ref{prop: linear dual is self-adjoint}, we obtain a natural map in $C$, dashed in the diagram below:
\[
\begin{tikzcd}
\thh^\C(C^\vee) \ar{r}\ar[dashed]{dr} & \Big(\thh^\C(C^\vee)\Big)^{\vee \vee}\ar{d}{\simeq}\\
& \Big(\cothh^\C(C)\Big)^\vee.
\end{tikzcd}
\]
As mentioned in the previous remark, these maps are \emph{not} equivalences in general.
\end{rem}

 \begin{rem}
Let $C$ be a quasi-proper $\sph$-coalgebra.
Combining the natural map in the above remark with the trace map in $K$-theory, we obtain:
\[
\textup{K}(C^\vee)\longrightarrow \thh(C^\vee) \longrightarrow\Big(\cothh(C)\Big)^\vee.
\]
We therefore obtain a map by applying the Spanier-Whitehead dual:
\[
\cothh(C) \longrightarrow \textup{K}(C^\vee)^\vee,
\]
as the composite
 $\cothh(C)\longrightarrow \Big(\thh(C^\vee)\Big)^\vee \longrightarrow \textup{K}(C^\vee)^\vee$.  By adjointness, this provides a map
 \[
 \textup{K}(C^\vee) \wedge \coTHH(C) \longrightarrow \sph.
 \]
 \end{rem}
 
We now prove Theorem \ref{thm most general thh cothh duality}. It essentially follows from the linear dual assigning the cyclic bar construction of a weak quasi-proper algebra $A$:
\[
\begin{tikzcd}
 \cdots \ar[shift left=3]{r}\ar[shift right=3]{r}\ar[shift left]{r}\ar[shift right]{r} &A\ot A \ot A \ar{r}\ar[shift left=2]{r}\ar[shift right=2]{r} &A\ot A\ar[shift left]{r}\ar[shift right]{r} & A.
\end{tikzcd}
\]
to the cyclic cobar construction of the linear dual coalgebra $A^\vee$:
\[
\begin{tikzcd}
 \cdots & \ar[shift left=3]{l}\ar[shift right=3]{l}\ar[shift left]{l}\ar[shift right]{l} (A\ot A\ot A)^\vee & \ar{l}\ar[shift left=2]{l}\ar[shift right=2]{l} (A\ot A)^\vee & \ar[shift left]{l}\ar[shift right]{l}  A^\vee \ar[equals]{d}\\
 \cdots & \ar[shift left=3]{l}\ar[shift right=3]{l}\ar[shift left]{l}\ar[shift right]{l} A^\vee \ot A^\vee\ot A^\vee \ar{u}{\simeq}& \ar{l}\ar[shift left=2]{l}\ar[shift right=2]{l} A^\vee\ot A^\vee \ar{u}{\simeq} & \ar[shift left]{l}\ar[shift right]{l}  A^\vee.
\end{tikzcd}
\]
We make precise the above idea in the following lemma.

\begin{lem}\label{lem: bar=cobar on linear dual}
Let $\C$ be a closed symmetric monoidal $\infty$-category.
Let $A$ be a \XXX-proper $\ee$-algebra in $\C$.
Then we obtain an equivalence of cosimplicial objects in $\C$: 
\[\left(\bar^\C(A)\right)^\vee \simeq \cobar_\C\left(A^\vee\right).\]
\end{lem}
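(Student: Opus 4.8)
The plan is to promote the linear dual to a \emph{strong} symmetric monoidal functor on the subcategory generated by $A$, and then feed this into Lemma \ref{lem: strong monoidal preserves cyclic bar}. First I would note that the simplicial object $\bar^\C(A)$ factors through the full subcategory $\C_A\subseteq \C$ spanned by the tensor powers $A^{\ot n}$ with $n\geq 0$. Indeed, by construction $\bar^\C(A)$ is the composite $N(\Delta\op)\to (\assoc^\ot)_\mathsf{act}\xrightarrow{A^\ot}(\Co)_\mathsf{act}\xrightarrow{\ot}\C$, whose last functor sends each simplicial degree to a tensor power of $A$. As the inclusion $\C_A\hookrightarrow \C$ is strong symmetric monoidal, Lemma \ref{lem: strong monoidal preserves cyclic bar} identifies the cyclic bar construction $\bar^{\C_A}(A)$ computed in $\C_A$ with $\bar^\C(A)$.

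Next I would recall from the proof of Proposition \ref{prop: cond 1 implies dual of algebra is a coalgebra} that, since $A$ is \XXX-proper, the restriction of the linear dual
\[ (-)^\vee \colon \C_A \longrightarrow \C\op \]
is strong symmetric monoidal. This is precisely where the hypothesis is used: the linear dual $(-)^\vee\colon \C\to \C\op$ is colax symmetric monoidal, and Condition \ref{cond 1} guarantees that its structure maps ${(A^\vee)}^{\ot n}\to{(A^{\ot n})}^\vee$ are equivalences, so the dual of Lemma \ref{lem: lax monoidal becomes strong conditions} upgrades the restricted functor to a strong one.

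The heart of the argument is then a single application of Lemma \ref{lem: strong monoidal preserves cyclic bar} to the strong symmetric monoidal functor $F=(-)^\vee\colon \C_A\to \C\op$ and the $\ee$-algebra $A$ in $\C_A$. This yields an equivalence of simplicial objects in $\C\op$,
\[ (-)^\vee\circ\bar^{\C_A}(A)\simeq \bar^{\C\op}\big(A^\vee\big), \]
where $F(A)=A^\vee$ is viewed as an $\ee$-algebra in $\C\op$, that is, as the $\ee$-coalgebra $A^\vee$ in $\C$. To finish, I would take opposites on both sides and invoke the very definition of the cyclic cobar construction as the levelwise opposite of the bar construction performed in $\C\op$; this turns the displayed equivalence into $\left(\bar^\C(A)\right)^\vee\simeq \cobar_\C(A^\vee)$ as cosimplicial objects in $\C$.

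I expect the only real obstacle to be the bookkeeping between $\C$ and $\C\op$: one must check that post-composing the simplicial object $\bar^\C(A)$ with the contravariant functor $(-)^\vee$ and then passing to opposites genuinely produces a \emph{cosimplicial} object, and that this coincides with the recipe defining $\cobar_\C$. The strong monoidality that powers the comparison has already been established in Proposition \ref{prop: cond 1 implies dual of algebra is a coalgebra}, so no new monoidal-structure verification is required here.
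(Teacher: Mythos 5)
Your proposal is correct and follows essentially the same route as the paper: both pass to the full subcategory $\C_A$ of tensor powers of $A$, use weak quasi-properness (Condition \ref{cond 1}) together with the dual of Lemma \ref{lem: lax monoidal becomes strong conditions} to make the restricted linear dual strong symmetric monoidal, apply Lemma \ref{lem: strong monoidal preserves cyclic bar}, and conclude via the definitional identification $\bar^{\C\op}(A^\vee)\simeq\cobar_\C(A^\vee)$. The only difference is cosmetic ordering — the paper applies Lemma \ref{lem: strong monoidal preserves cyclic bar} once to the composite $\C_A\hookrightarrow\C\xrightarrow{(-)^\vee}\C\op$ and once to the inclusion, whereas you first identify $\bar^{\C_A}(A)$ with $\bar^\C(A)$ and then dualize.
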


\begin{proof}
For $A$ a weak quasi-proper $\ee$-algebra in $\C$, we consider the subcategory $\C_A$ of $\C$ just as in the proof of Proposition \ref{prop: cond 1 implies dual of algebra is a coalgebra}. Notice that $A$ is also an $\ee$-algebra in $\C_A$.  Since the composition
\[
\begin{tikzcd}
\C_A \ar[hook]{r} & \C\ar{r}{(-)^\vee} & \C\op
\end{tikzcd}
\]
is strong symmetric monoidal, by Lemma \ref{lem: strong monoidal preserves cyclic bar}, we obtain an equivalence:
\[
\left(\bar^{\C_A}(A)\right)^\vee \simeq \bar^{\C\op}(A^\vee).
\]
By applying again Lemma \ref{lem: strong monoidal preserves cyclic bar} on the strong symmetric monoidal functor $\C_A\subseteq \C$, we have:
\[
\left(\bar^{\C_A}(A)\right)^\vee \simeq \left(\bar^{\C}(A)\right)^\vee.
\]
By definition, we also obtain:
\[
\bar^{\C\op}(A^\vee) \simeq \cobar_\C(A^\vee).
\]
Combining the above equivalences, we obtain the desired result.
\end{proof}

\begin{proof}[Proof of Theorem \ref{thm most general thh cothh duality}]
We first prove \ref{item: main thm algebra part}. By Lemma \ref{lem: bar=cobar on linear dual}, we have:
\[
\left( \bar^{\C}(A)\right)^\vee \simeq \cobar_\C(A^\vee).
\]
The functor $(-)^\vee:\C\op\rightarrow \C$, as a right adjoint (see Proposition \ref{prop: linear dual is self-adjoint}), preserves limits (or sends colimits to limits when regarded as a contravariant functor). Therefore, if we take the totalization of the cosimplicial objects on each side of the equivalence above we obtain the desired equivalence:
\[
\Big(\thh^\C(A)\Big)^\vee \simeq \cothh^\C(A^\vee).
\]

We now prove \ref{item: main thm coalgebra part}. Let $C$ be a quasi-proper $\ee$-algebra in $\C$. Then $C^\vee$ is a quasi-proper $\ee$-algebra and in particular weak quasi-proper, by Theorem \ref{thm: anti-equivalence between quasi-proper (co)algebras}. Therefore we can apply \ref{item: main thm algebra part} to $A=C^\vee$, and we obtain:
\[
\Big(\thh^\C(C^\vee)\Big)^\vee \simeq \cothh^\C(C^{\vee\vee}).
\]
By Theorem \ref{thm: anti-equivalence between quasi-proper (co)algebras}, we have $C^{\vee\vee}\simeq C$ as $\ee$-coalgebras in $\C$. Thus we obtain:
\[
 \cothh^\C(C^{\vee\vee})\simeq \cothh^\C(C).
\]
Combining the two equivalences, we have obtained the desired result.
\end{proof}

\section{Applications to the \texorpdfstring{$\infty$}{TEXT}-category of \texorpdfstring{$\hz$}{TEXT}-modules} \label{sec proof of cothh thh duality in chains}

In this section, we start with the proof Theorem \ref{thm cothh duality in chains}. This is an application of  Theorem \ref{thm most general thh cothh duality} to $HA$-modules where $A$ denotes a discrete commutative ring. After that, we prove Theorem \ref{thm equivalence of connective coalgebras and coconnective coalgebras in chains} which provides a contravariant equivalence of $\infty$-categories between algebras and coalgebras in (co)connective $HA$-modules with finite and free homology. We apply Theorem \ref{thm: anti-equivalence between quasi-proper (co)algebras} to prove Theorem \ref{thm equivalence of connective coalgebras and coconnective coalgebras in chains}. 

\subsection{Duality between THH and coTHH in \texorpdfstring{$HA$}{TEXT}-modules}\label{subsection duality between HH and coHH}
Let $A$ denote a discrete commutative  Noetherian ring for the rest of this section. Furthermore, we assume that $A$ has finite global dimension, i.e.\ there is an $d$ such that every $A$-module has a projective resolution of length at most $d$. Recall that we say an $HA$-module (or an $HA$-(co)algebra) $M$ is of finite type if $\pi_iM$ is a finitely generated $A$-module for every $i$. We restate Theorem \ref{thm cothh duality in chains} below.

\begin{thm}\label{thm restatement of cothh thh duality in chains}
Let $C$ be a  connective or coconnective $HA$-coalgebra of finite type where $A$ is as  above. Then there is an equivalence of $HA$-module spectra:
\[\cothh^{HA}(C) \simeq \left(\thh^{HA}(C^\vee)\right)^\vee.\]
\end{thm}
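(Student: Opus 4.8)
The plan is to deduce Theorem \ref{thm restatement of cothh thh duality in chains} from the general duality result Theorem \ref{thm most general thh cothh duality}\ref{item: main thm coalgebra part}. That theorem already gives the desired equivalence for any \emph{quasi-proper} $\ee$-coalgebra $C$ in a closed symmetric monoidal $\infty$-category. Since $\Mod_{HA}(\sp)$ is closed symmetric monoidal and admits geometric realizations and totalizations, the entire task reduces to verifying that a connective or coconnective finite type $HA$-coalgebra $C$ is quasi-proper. By Proposition \ref{prop cond 1 2 and cond 2 on c ot c imply quasi-proper}, it suffices to check that $C$ satisfies Conditions \ref{cond 1} and \ref{cond 2}, and that every smash power $C^{\wdg n}$ (the tensor power in $\Mod_{HA}(\sp)$) satisfies Condition \ref{cond 2}, as an object of $\Mod_{HA}(\sp)$.

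So the heart of the argument is purely module-theoretic: I would establish that any connective or coconnective finite type $HA$-module $X$ is \emph{quasi-dualizable}, i.e.\ satisfies both conditions, and moreover that this property is stable under the relevant smash powers. For Condition \ref{cond 2}, the map $X \to X^{\vee\vee}$, I would work on homotopy groups using the K\"unneth and universal coefficient (Ext) spectral sequences available over $HA$ (as in Example \ref{ex cond 2 doesnt imply cond 1}); here the finiteness hypothesis — each $\pi_i X$ finitely generated over $A$ — together with $A$ being Noetherian of finite global dimension is what forces the double-dual map to be an isomorphism, since over such a ring a bounded-below (or bounded-above) finite type complex has a finite length finite type projective resolution, making $\Ext$ and $\Tor$ groups finitely generated and the biduality map an equivalence. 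For Condition \ref{cond 1}, the comparison ${(X^\vee)}^{\wdg n}\to {(X^{\wdg n})}^\vee$, the (co)connectivity is essential: it guarantees that in each fixed total degree of the relevant bicomplex only finitely many summands contribute, so that the duality commutes with the $n$-fold tensor product (contrast Examples \ref{ex counter to cond 1} and \ref{ex cond 2 doesnt imply cond 1}, where unbounded homotopy or infinite type breaks exactly this step).

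The main obstacle I anticipate is controlling the smash powers $C^{\wdg n}$: one must simultaneously maintain the three properties (finiteness, (co)connectivity, and the conditions) across all $n\geq 0$. A connective finite type module has connective finite type smash powers, and likewise in the coconnective case, so the class is closed under $\wdg_{HA}$; the finite global dimension of $A$ ensures the spectral sequences degenerate into manageable finite computations so that $\Tor$ and $\Ext$ stay finite type and the biduality map $C^{\wdg n}\to (C^{\wdg n})^{\vee\vee}$ remains an equivalence for every $n$. Once this bookkeeping is in place, Lemma \ref{lem: cond 3} converts Condition \ref{cond 2} on all the $C^{\wdg n}$ (plus Conditions \ref{cond 1} and \ref{cond 2} on $C$ itself) into Condition \ref{cond 1} for $C^\vee$, so $C$ is quasi-proper by Proposition \ref{prop cond 1 2 and cond 2 on c ot c imply quasi-proper}. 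Applying Theorem \ref{thm most general thh cothh duality}\ref{item: main thm coalgebra part} then yields $\cothh^{HA}(C)\simeq \left(\thh^{HA}(C^\vee)\right)^\vee$, completing the proof. I would isolate the technical homotopy-group computations (which presumably appear as a separate lemma, e.g.\ the one referenced in Proposition \ref{prop dualizable ha modules} and Remark \ref{rem: different notions of dualizable}) and cite them here rather than reproving them inline.
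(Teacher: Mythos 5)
Your proposal is correct and follows essentially the same route as the paper: apply Theorem \ref{thm most general thh cothh duality}\ref{item: main thm coalgebra part}, reduce quasi-properness of $C$ to Conditions \ref{cond 1} and \ref{cond 2} via Proposition \ref{prop cond 1 2 and cond 2 on c ot c imply quasi-proper}, and delegate the verification for (co)connective finite type $HA$-modules and their smash powers to separate technical lemmas (the paper's Lemmas \ref{lem C and C ot C stsfy cndt 2} and \ref{lem C satisfies condition 1}, which carry this out by Postnikov truncation to reduce to the dualizable case of Proposition \ref{prop dualizable ha modules}). Your sketch of those lemmas is slightly looser than the paper's truncation argument, but since you explicitly isolate them as cited lemmas, the structure of the proof of the theorem itself matches.
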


\begin{proof}
We apply Theorem \ref{thm most general thh cothh duality}. We need to show that $C$ is a quasi-proper $HA$-coalgebra.
It follows by Proposition \ref{prop cond 1 2 and cond 2 on c ot c imply quasi-proper} that it is sufficient to show that $C$ satisfies  Conditions \ref{cond 1} and \ref{cond 2} and $C^{\wdg_{HA}n}$  satisfies Condition \ref{cond 2} in $HA$-modules for every $n$.
This follows by Lemmas \ref{lem C and C ot C stsfy cndt 2} and \ref{lem C satisfies condition 1} below.
\end{proof}

\begin{rem}
In Remark 2.6 of \cite{HScothh},  Hess and Shipley claim a  duality relation between coHochschild homology and Hochschild homology in $k$-chain complexes where $k$ denotes a field. Their claim is an equivalence:
\[\left(\cohh^k_*(C)\right)^\vee \cong \HH^k_*(C^\vee),\]
for a  general coalgebra $C$ in $k$-chain complexes. However, we believe that this is not true in this generality and that  one needs finiteness and (co)connectivity conditions as in Theorem \ref{thm cothh duality in chains}. This is due to our Examples \ref{ex cond 2 doesnt imply cond 1} and \ref{ex counter to cond 1} showing that the linear dual functor is not strong monoidal without these assumptions.
\end{rem}

The rest of this subsection is devoted to the proof of Lemmas \ref{lem C and C ot C stsfy cndt 2} and \ref{lem C satisfies condition 1} below. For this, we begin with an identification of dualizable objects in $HA$-modules, see Proposition \ref{prop dualizable ha modules}. From Proposition \ref{prop dualizable ha modules}, we deduce that for a connective $C$ as in Theorem \ref{thm cothh duality in chains}, every Postnikov section of $C$ is dualizable and therefore satisfies the relevant conditions. In the proofs of Lemmas \ref{lem C and C ot C stsfy cndt 2} and \ref{lem C satisfies condition 1}, we consider the Postnikov section maps $C \to \tau_{\leq n}C$ for various $n$ to prove that $C$ also satisfies Conditions \ref{cond 1} and \ref{cond 2} and $C^{\wdg_{HA}n}$ satisfies Condition \ref{cond 2} as desired. For coconnective $C$, we use the connective cover functors instead of the Postnikov section functors and argue similarly.

Recall from Example \ref{ex: dualizable in R-modules}  
that dualizable objects in $HA$-modules are precisely what are called the perfect $HA$-modules, see  \cite[\RomanNumeralCaps{3}.7.9]{elmendorf2007rings} and \cite[3.2.3]{lurie2}.  An $HA$-module is said to be perfect if it lies in the smallest subcategory of the homotopy category of $HA$-modules that contains $HA$ and is closed under triangles and retracts. For instance, finite coproducts of shifted copies of $HA$ are examples of perfect $HA$-modules. 

Since $A$ is Noetherian of finite projective dimension, every finitely generated $A$-module $M$ admits  a finite length  resolution of finitely generated projective $A$-modules. This shows that for every finitely generated $A$-module $M$, $HM$ is perfect and therefore dualizable in $HA$-modules. Furthermore, we have the following proposition that characterize dualizable objects in $HA$-modules. 

\begin{prop}\label{prop dualizable ha modules}
Let $A$ be a discrete Noetherian ring of finite global dimension. 
An $HA$-module $N$ is dualizable if and only if $\pi_*N$ is finitely generated as a graded $A$-module, i.e.\ $\pi_iN$ is a finitely generated $A$-module for every $i$ and $\pi_iN\neq 0$ for only finitely many $i$.
\end{prop}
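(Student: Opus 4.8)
The plan is to prove both implications by first passing through the identification, recorded in Example \ref{ex: dualizable in R-modules}, of dualizable $HA$-modules with perfect ones, and then analyzing the thick subcategory of perfect modules from two sides: building bounded finite-type modules up out of perfect pieces for one direction, and cutting perfect modules down via a thick-subcategory argument for the other.

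First I would treat the ``if'' direction. Suppose $\pi_*N$ is finitely generated as a graded $A$-module, so that $\pi_iN=0$ for all but finitely many $i$ and each $\pi_iN$ is a finitely generated $A$-module. Since $N$ is bounded, its Postnikov tower is finite, with layers given by the fiber sequences
\[
\Sigma^i H\pi_iN \longrightarrow \tau_{\leq i}N \longrightarrow \tau_{\leq i-1}N.
\]
Because $A$ has finite global dimension and is Noetherian, each finitely generated $A$-module $\pi_iN$ admits a finite-length resolution by finitely generated projective $A$-modules, so each $H\pi_iN$ and each of its shifts is perfect, exactly as in the discussion preceding the proposition. As the perfect $HA$-modules form a thick subcategory---in particular one closed under shifts and cofiber sequences---an induction on the finite number of nonzero Postnikov layers shows $N$ is perfect, hence dualizable.

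For the ``only if'' direction I would run the thick-subcategory argument in the opposite direction. Let $\mathcal{P}\subseteq \Mod_{HA}(\sp)$ be the full subcategory of modules $M$ with $\pi_*M$ finitely generated as a graded $A$-module. Then $HA\in\mathcal{P}$ and $\mathcal{P}$ is closed under shifts. Given a cofiber sequence $X\to Y\to Z$ with $X,Z\in\mathcal{P}$, the long exact sequence in homotopy exhibits each $\pi_nY$ as an extension of a submodule of $\pi_nZ$ by a quotient of $\pi_nX$; since $A$ is Noetherian, both submodules and quotients of finitely generated modules are finitely generated, so $\pi_nY$ is finitely generated, and boundedness of $\pi_*Y$ is immediate from the long exact sequence. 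Likewise, if $M$ is a retract of some $N\in\mathcal{P}$, then $\pi_*M$ is a direct summand of the finitely generated graded module $\pi_*N$, hence finitely generated over the Noetherian ring $A$. Thus $\mathcal{P}$ is a thick subcategory containing $HA$, so it contains the thick subcategory generated by $HA$, namely all perfect---equivalently, all dualizable---modules, which yields the claim.

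The place where the hypotheses are genuinely used, and which I would verify with care, is the interplay of the two finiteness assumptions. Finite global dimension is precisely what guarantees that each $H\pi_iN$ is perfect in the ``if'' direction; without it an infinite projective resolution would obstruct perfectness. The Noetherian hypothesis is what keeps finite generation stable under the subquotients appearing in the long exact sequences and under retracts in the ``only if'' direction, so I would check closure of $\mathcal{P}$ under cofiber sequences most carefully, as this is the one step where Noetherianness is essential.
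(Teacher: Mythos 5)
Your proof is correct, and your ``only if'' direction is essentially the paper's own argument: both treat the class of modules with bounded, degreewise finitely generated homotopy as a thick subcategory containing $HA$, with Noetherianness controlling the subquotients appearing in the long exact sequence and finite generation passing to retracts. The ``if'' direction uses a genuinely different decomposition. The paper first reduces to the connective case and inducts on the top nonvanishing degree $h(N)$: it chooses a map $\bigvee_{j=1}^m HA\to N$ surjecting onto $\pi_0N$, and the cofiber, after desuspension, is connective with strictly smaller $h$, the base case being $N=HM$ for $M$ finitely generated. You instead filter $N$ by its (finite) Postnikov tower and observe that each layer $\Sigma^i H\pi_iN$ is perfect. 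Both arguments rest on the same key input --- that $HM$ is perfect for every finitely generated $A$-module $M$, which is exactly where the Noetherian and finite-global-dimension hypotheses enter --- and both are finite inductions on the width of $\pi_*N$. Your route avoids the reduction to the connective case and the bookkeeping with $h(N)$, at the mild cost of invoking the Postnikov filtration; the paper's route builds the cell structure by hand. Either yields a complete proof.
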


\begin{proof}
Let $N$ be a dualizable $HA$-module. This implies that $N$ can be obtained from $HA$ via taking finitely many triangles and retracts. The conclusion is immediate for the case $N=HA$ as $\pi_*HA = A$ is a finitely generated $A$-module. Therefore, it is sufficient to show that the property of having finitely generated homotopy is closed under retracts and triangles. 
The closeness under retracts follows form the fact that the codomain of a surjective map is finitely generated when the domain is finitely generated. For triangles, assume that there is a triangle
\[M \longrightarrow N \longrightarrow L,\]
in $HA$-modules where $M$ and $L$ have finitely generated homotopy groups. We need to show that $N$  also has finitely generated homotopy groups. From the induced long exact sequence, we get that $\pi_i N\neq 0$  for only finitely many $i$. Therefore it is sufficient to show that $\pi_iN$ is finitely generated for each $i$. The induced long exact sequence in homotopy can be broken into short exact sequences of the form: 
\[0 \longrightarrow Y \longrightarrow \pi_iN \longrightarrow Z \longrightarrow 0,\]
where $Z$ is a submodule of the finitely generated $A$-module $\pi_iL$. Since $A$ is Noetherian, $Z$ is also finitely generated. The $A$-module $Y$ is the image of the finitely generated $A$-module $\pi_iM$ under an $A$-module homomorphism and therefore $Y$ is also finitely generated. This proves that $\pi_iN$ is finitely generated. We conclude that $\pi_*N$ is a finitely generated graded $A$-module as desired.

Now we prove the other direction. Since dualizable $HA$-modules are closed under triangles, we get that an $HA$-module is dualizable if and only if one of its (de)suspensions is dualizable. Therefore, we can assume without loss of generality that $N$ is connective. For a given $HA$-module $N$ with finitely generated homotopy, let $h(N)$ be the largest $i$ for which $\pi_iN \neq 0$. We argue inductively on $h(N)$.

For the base case of our induction, we have $N=HM$, for some finitely generated $A$-module $M$. We already argued that $N$ is a dualizable object in this situation. 

Assume that every connective $A$-module $M$ with $h \geq h(M)$ and finitely generated homotopy is dualizable. Let $N$ be an $HA$-module with finitely generated $\pi_*N$ and $h(N)= h+1$. We need to show that $N$ is dualizable. 
Since $\pi_0 N$ is finitely generated, we have a map: 
\[\bigvee_{j = 1}^m HA \longrightarrow N,\]
that induces a surjective map on degree $0$ homotopy where the domain denotes a finite coproduct. 
Let $N^\prime$ be defined by the cofiber sequence: 
\[\bigvee_{j = 1}^m HA \longrightarrow N \longrightarrow N^\prime.\]
Using the long exact sequence induced by this cofiber sequence, one observes that $\pi_0N' = 0$ and: 
\[\pi_iN'= \pi_i N = 0,\]
for every $i > h(N)=h+1$ and $i<0$. Since $A$ is Noetherian, every submodule of a finitely generated $A$-module is also finitely generated. This fact, together with the aforementioned long exact sequence implies that $\pi_i N'$ is finitely generated for every $i$. It follows by our induction hypothesis that $\Sigma^{-1}N'$ is dualizable because $h(\Sigma^{-1}N')\leq h $. Therefore, $N'$ is also dualizable and it is generated by $HA$ under triangles and retracts. The triangle above proves that $N$ is also generated by $HA$ under triangles and retracts. In other words, $N$ is also dualizable.
\end{proof}

For the rest of this section, let $d$ denote the global dimension of $A$.  We say a  spectrum $E$ is \emph{$n$-connective} if $\pi_iE =0$ for every $i < n$. Similarly, we say $E$ is \emph{$n$-coconnective} if $\pi_iE =0$ for every $i > n$. Before we start the proof of Lemmas \ref{lem C and C ot C stsfy cndt 2} and \ref{lem C satisfies condition 1}, we need to prove the following technical lemmas.

\begin{lem}\label{lem dualization and connectivity}
If $N$ is an  $n$-connective $HA$-module, then $N^\vee$ is an $-n$-coconnective $HA$-module. If $N$ is an $n$-coconnective $HA$-module, then $N^\vee$ is a $-(n+d)$-connective $HA$-module. 
\end{lem}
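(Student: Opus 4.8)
The plan is to deduce both connectivity statements from the universal coefficient (hyper-$\Ext$) spectral sequence computing the homotopy of the $HA$-linear dual $N^\vee = [N, HA]$. By the $\Ext$ spectral sequence of \cite[\RomanNumeralCaps{4}.4.1]{elmendorf2007rings} there is a spectral sequence
\[
E_2^{s,t} = \Ext^s_A(\pi_t N, A) \Longrightarrow \pi_{-s-t}(N^\vee),
\]
so that $\pi_m(N^\vee)$ admits a finite filtration whose associated graded is built from the groups $\Ext^s_A(\pi_t N, A)$ with $s + t = -m$. The only inputs from the hypotheses on $A$ are that $\Ext^s_A(-,A) = 0$ for $s < 0$ and, because $A$ has finite global dimension $d$, also for $s > d$; thus only the band $0 \le s \le d$ can contribute.

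First I would treat the connective case. If $N$ is $n$-connective then $\pi_t N = 0$ for $t < n$, so a term $E_2^{s,t}$ contributing to $\pi_m(N^\vee)$ has $s + t = -m$ with $s \ge 0$ and is nonzero only if $t \ge n$; hence $-m = s + t \ge n$, i.e.\ $m \le -n$. Therefore $\pi_m(N^\vee) = 0$ for $m > -n$, so $N^\vee$ is $(-n)$-coconnective. (This half also follows immediately from the $t$-structure, since $\pi_m(N^\vee) = \pi_0[N,\Sigma^{-m}HA]$ vanishes when the $n$-connective object $N$ is mapped to the $(-m)$-coconnective object $\Sigma^{-m}HA$ with $-m < n$.)

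Next the coconnective case, which is where the shift by $d$ appears. If $N$ is $n$-coconnective then $\pi_t N = 0$ for $t > n$, so a nonzero contribution to $\pi_m(N^\vee)$ requires $t \le n$ together with $0 \le s \le d$, giving $-m = s + t \le n + d$, i.e.\ $m \ge -(n+d)$. Hence $\pi_m(N^\vee) = 0$ for $m < -(n+d)$, so $N^\vee$ is $-(n+d)$-connective. The asymmetry $-n$ versus $-(n+d)$ between the two bounds is exactly the width $d$ of the $\Ext$-band, so finiteness of the global dimension is used precisely here.

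The step requiring the most care is convergence, since a coconnective $N$ may have homotopy in arbitrarily negative degrees. This is not a real problem: in each fixed total degree the constraints $0 \le s \le d$ and $s + t = -m$ leave only finitely many pairs $(s,t)$, so the filtration of $\pi_m(N^\vee)$ is finite and convergence holds degreewise. Should one wish to avoid the spectral sequence entirely, the coconnective case can be recovered by writing $N \simeq \operatorname{colim}_k \tau_{\ge -k} N$ as the colimit of its connective covers, dualizing to the limit $N^\vee \simeq \lim_k (\tau_{\ge -k} N)^\vee$, verifying by induction on the finitely many homotopy groups of each bounded piece that $(\tau_{\ge -k} N)^\vee$ is $-(n+d)$-connective (the base case being $(\Sigma^j H M)^\vee$, which lives in degrees $[-j-d,-j]$ via a length-$\le d$ projective resolution of the $A$-module $M$), and then killing the resulting $\lim^1$ term by a Mittag--Leffler argument; in that route the $\lim^1$ control is the genuine obstacle. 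I expect the spectral-sequence bookkeeping to be cleanest, with the vanishing of $\Ext^s_A(-,A)$ above degree $d$ doing all the real work.
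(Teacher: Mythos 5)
Your proposal is correct and follows essentially the same route as the paper: both use the Ext spectral sequence of EKMM IV.4.1 converging to $\pi_*(N^\vee)$, with the connectivity of $\pi_*N$ bounding the internal degree and the vanishing of $\Ext^s_A(-,A)$ outside $0\le s\le d$ (finite global dimension) producing the shift by $d$ in the coconnective case. Your extra remarks on degreewise convergence are a welcome addition but do not change the argument.
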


\begin{proof}
For this, we use the Ext spectral sequence of \cite[\RomanNumeralCaps{4}.4.1]{elmendorf2007rings} to compute the mapping space defining $N^\vee$. The $E_2$-page of this spectral sequence is given by: 
\[E_2^{p,q} =  \Ext^{p,-q}_A(\pi_*N, A),\]
and it abuts to: 
\[\pi_{-(p+q)}\left([N,A]\right),\]
where $[-,-]$ denotes the internal mapping   spectrum in $HA$-modules and the Ext groups $\Ext^{*,-q}_A(-, -)$  are computed by considering the maps that decrease the internal degree by $q$. For the first statement, note that $\pi_*N$ has a projective resolution consisting of graded modules that are trivial below degree $n$ and this ensures that $E_2^{*,q}=0$ for $q < n$.  Since $p \geq 0$, we have $q<n$ whenever $-(p+q)>-n$ and therefore  $E_2^{p,q}=0$ whenever $-(p+q)>-n$. This proves the first statement.

Since $A$ has global dimension $d$, $\pi_*N$ has a projective resolution of length $d$ in graded $A$-modules. This shows that $E_2^{p,*}= 0$
for every $p>d$. Since $N$ is $n$-coconnective,  $E_2^{*,q}\neq 0$ implies $q\leq n$. Therefore the  possible non-trivial entries contributing to the homotopy group in degree $-(p+q)$ with   $-(p+q)<-(n+d)$ should also satisfy $q \leq n$. But in this situation, we have $p>d$ and therefore $E_2^{p,q}=0$. In other words, $E_2^{p,q}=0$ whenever $-(p+q)<-(n+d)$.
\end{proof}

\begin{lem}\label{lem tensors and connectivity}
Let $M$ and $N$ be $A$-modules. If $M$ is $m$-connective and $N$ is $n$-connective then $M \wdg_{HA}N$ is $(m+n)$-connective. If $M$ and $N$ are $m$-coconnective and $n$-coconnective respectively, then $M\wdg_{HA} N$ is $(m+n +d)$-coconnective.
\end{lem}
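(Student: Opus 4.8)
The plan is to feed both statements into the Künneth spectral sequence for $HA$-modules of \cite[\RomanNumeralCaps{4}.4.1]{elmendorf2007rings}. As $A=\pi_*HA$ is concentrated in degree $0$, this spectral sequence takes the form
\[
E^2_{p,q}=\Tor^{A}_{p}(\pi_*M,\pi_*N)_q \Longrightarrow \pi_{p+q}\left(M\wdg_{HA}N\right),
\]
where $\Tor^{A}_{p}(-,-)_q$ denotes the internal degree $q$ part of the $p$-th graded $\Tor$ group, the internal grading being induced by the homotopy gradings of $M$ and $N$. In each case I would locate the region of the $(p,q)$-plane in which $E^2$ can be nonzero and read off the resulting bound on the total degree $p+q$; since $p\geq 0$ always, the only extra input needed is control of the internal degrees occurring in a graded free resolution of $\pi_*M$ over $A$.

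For the connective statement, $\pi_*M$ lives in internal degrees $\geq m$ and $\pi_*N$ in degrees $\geq n$. Choosing generators of $\pi_*M$ in degrees $\geq m$ and iterating, one obtains a graded free resolution $F_\bullet\to \pi_*M$ with each $F_p$ concentrated in degrees $\geq m$ (a free module on generators of degree $\geq m$ is so concentrated as $A$ is in degree $0$, and the syzygies inherit this). Then $F_\bullet\otimes_A \pi_*N$ is concentrated in internal degrees $\geq m+n$, so $E^2_{p,q}=0$ for $q<m+n$; together with $p\geq 0$ this forces $\pi_k(M\wdg_{HA}N)=0$ for $k<m+n$, i.e.\ $(m+n)$-connectivity.

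For the coconnective statement I would combine two vanishing ranges. Dually to the above, since $\pi_*M$ is now concentrated in degrees $\leq m$, one may choose a graded free resolution with every term concentrated in degrees $\leq m$; tensoring with $\pi_*N$ gives $E^2_{p,q}=0$ for $q>m+n$. At the same time, because $A$ has global dimension $d$ we have $\Tor^A_p(-,-)=0$ for $p>d$, so $E^2_{p,q}=0$ whenever $p>d$. Thus every nonzero entry satisfies $0\leq p\leq d$ and $q\leq m+n$, giving $p+q\leq m+n+d$ and hence $\pi_k(M\wdg_{HA}N)=0$ for $k>m+n+d$. The only genuinely nonformal ingredient is the finite global dimension hypothesis, which is precisely what produces the extra shift by $d$; convergence is not an obstacle, since $E^2$ is supported in a quadrant in the connective case and in a horizontal strip of finite width $d$ in the coconnective case, so the spectral sequence converges strongly in both settings.
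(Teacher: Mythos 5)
Your proposal is correct and follows essentially the same route as the paper: both arguments run the K\"unneth spectral sequence $E^2_{p,q}=\Tor^A_{p,q}(\pi_*M,\pi_*N)\Rightarrow\pi_{p+q}(M\wdg_{HA}N)$, bound the internal degree $q$ by choosing a graded flat resolution of $\pi_*M$ concentrated in the appropriate degrees, and in the coconnective case combine this with the vanishing $E^2_{p,*}=0$ for $p>d$ coming from finite global dimension. The extra details you supply (the explicit construction of the degreewise-bounded resolution and the convergence remark) are welcome but do not change the argument.
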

\begin{proof}
In this case, we use the K\"unneth spectral sequence in \cite[\RomanNumeralCaps{4}.4.1]{elmendorf2007rings} to compute $M \wdg_{HA}N$. The $E^2$-page is given by: 
\[E^2_{p,q}= \textup{Tor}_{p,q}^A(\pi_*M,\pi_*N) \Longrightarrow \pi_{p+q}(M \wdg_{HA} N).\]
In the first situation, there is a flat resolution of $\pi_*M$  given by graded $A$-modules that are trivial below degree $m$. Tensoring this resolution with $\pi_*N$ gives a resolution that is trivial in degrees  below $m+n$. Therefore $E^2_{p,q}=0$ for $q<m+n$. This gives the desired result. 

For the second statement, note that $\pi_*M$ admits a projective and therefore a flat resolution of length at most $d$. Therefore, $E^2_{p,*}=0$ for every $p > d$. Due to the coconnectivity assumptions, we also have $E^2_{*,q}=0$ whenever $q>m+n$. This provides the second statement in the lemma.
\end{proof}

\begin{lem} \label{lem C and C ot C stsfy cndt 2}
If $C$ is a (co)connective finite type $HA$-module, then $C^{\wdg_{HA}n}$ satisfies Condition \ref{cond 2} for every $n\geq0$. In particular, $C$ satisfies Condition \ref{cond 2}.
\end{lem}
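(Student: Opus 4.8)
The plan is to reduce the statement to the single assertion that \emph{every} (co)connective finite-type $HA$-module satisfies Condition \ref{cond 2}, and then apply this to each smash power $C^{\wdg_{HA}n}$. First I would check that $C^{\wdg_{HA}n}$ is again a (co)connective finite-type $HA$-module. The (co)connectivity is exactly Lemma \ref{lem tensors and connectivity} applied inductively. For the finite-type claim one runs the K\"unneth spectral sequence of \cite[\RomanNumeralCaps{4}.4.1]{elmendorf2007rings} computing $\pi_*(C^{\wdg_{HA}n})$: because $A$ is Noetherian, each $\Tor^A_{p,q}$ of finitely generated modules is finitely generated, and because $A$ has finite global dimension $d$ together with the (co)connectivity of $C$, only finitely many entries on each antidiagonal $p+q=m$ are nonzero. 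Hence $\pi_m(C^{\wdg_{HA}n})$ is a finite iterated extension of finitely generated $A$-modules, so finitely generated. Thus it suffices to prove the single-object statement; the case $n=1$ gives the final clause.

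For the single-object statement, consider first a connective finite-type $HA$-module $X$ and fix $n\geq 0$. The truncation $\tau_{\leq n}X$ is connective, bounded above, and finite type, so $\pi_*(\tau_{\leq n}X)$ is finitely generated as a graded $A$-module; by Proposition \ref{prop dualizable ha modules} it is dualizable, and hence by Corollary \ref{cor dualizable satisfy cond 1 and 2} the unit map $\tau_{\leq n}X\to(\tau_{\leq n}X)^{\vee\vee}$ is an equivalence. The functor $(-)^{\vee\vee}$ is exact (it is a composite of two contravariant exact functors $[-,HA]$, each carrying fiber sequences to fiber sequences), and the unit of the self-adjunction of Proposition \ref{prop: linear dual is self-adjoint} is natural. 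Applying $(-)^{\vee\vee}$ to the fiber sequence $\tau_{\geq n+1}X\to X\to\tau_{\leq n}X$ therefore yields a map of fiber sequences whose rightmost vertical map is an equivalence. Taking vertical fibers identifies
\[
\mathrm{fib}\big(X\to X^{\vee\vee}\big)\simeq \mathrm{fib}\big(\tau_{\geq n+1}X\to(\tau_{\geq n+1}X)^{\vee\vee}\big).
\]

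Now I would bound connectivity. Since $\tau_{\geq n+1}X$ is $(n+1)$-connective, Lemma \ref{lem dualization and connectivity} gives that $(\tau_{\geq n+1}X)^\vee$ is $-(n+1)$-coconnective, and applying the lemma a second time shows $(\tau_{\geq n+1}X)^{\vee\vee}$ is $(n+1-d)$-connective. Both source and target of the map on the right are then $(n+1-d)$-connective, so its fiber is $(n-d)$-connective. As the left-hand side $\mathrm{fib}(X\to X^{\vee\vee})$ does not depend on $n$, letting $n\to\infty$ forces all of its homotopy groups to vanish; hence $X\to X^{\vee\vee}$ is an equivalence and $X$ satisfies Condition \ref{cond 2}. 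For coconnective $X$ the argument is dual: one replaces Postnikov truncations by connective covers, uses the fiber sequence $\tau_{\geq -n}X\to X\to\tau_{\leq -n-1}X$ with $\tau_{\geq -n}X$ bounded and finite type (hence dualizable), and the coconnectivity half of Lemma \ref{lem dualization and connectivity} to make the relevant fiber arbitrarily coconnective as $n\to\infty$.

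The main obstacle is the bookkeeping that glues these pieces together: verifying that $(-)^{\vee\vee}$ is exact and that the unit $X\to X^{\vee\vee}$ is natural, so that one genuinely obtains a map of fiber sequences and may identify the fiber of the double-dualization map of $X$ with that of its highly (co)connective part. Once this identification is in place, the quantitative input from Lemma \ref{lem dualization and connectivity} — that double dualization shifts connectivity by only the finite amount $d$ — does the rest, since the ``error term'' $\tau_{\geq n+1}X$ (resp.\ $\tau_{\leq -n-1}X$) becomes arbitrarily highly connective (resp.\ coconnective).
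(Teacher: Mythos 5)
Your argument is correct, and while the $n=1$ engine is the same as the paper's, your treatment of the higher smash powers takes a genuinely different route. For a single module you and the paper both truncate, observe that the bounded finite-type truncation is dualizable by Proposition \ref{prop dualizable ha modules} (so its unit map is an equivalence by Corollary \ref{cor dualizable satisfy cond 1 and 2}), and use Lemma \ref{lem dualization and connectivity} to push the remaining term out of any fixed degree; you package this as an identification of fibers and let the truncation level go to infinity, whereas the paper fixes a degree $i$, chooses the truncation level explicitly ($i+d+1$, resp.\ $i-d$), and checks $\pi_i$ directly --- the two are interchangeable. The divergence is at $n\geq 2$: the paper inducts on $n$, truncating only one tensor factor and propagating the inductive hypothesis through the finite cell structure of the dualizable truncation via the observation that both sides of $\eta$ preserve triangles and retracts in that factor; you instead prove that $C^{\wedge_{HA}n}$ is itself a (co)connective finite-type $HA$-module and feed it back into the single-object statement. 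That reduction is valid --- the K\"unneth argument works because $A$ is Noetherian (so graded Tor of degreewise finitely generated bounded modules is degreewise finitely generated) and finite global dimension together with (co)connectivity confines each antidiagonal to finitely many entries --- but two small points deserve explicit mention. First, in the coconnective case $C^{\wedge_{HA}n}$ is only $((n-1)d)$-coconnective by Lemma \ref{lem tensors and connectivity}, so you need the single-object statement for arbitrary bounded-above finite-type modules rather than $0$-coconnective ones; this costs nothing since the argument is shift-invariant, but it should be said. Second, the finite-typeness of smash powers is an input the paper never needs or records, so your approach trades the paper's induction for this extra (correct) K\"unneth bookkeeping, and in exchange yields a cleaner structural statement.
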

\begin{proof}

We start with the proof of the case  $n=1$. Let $C$ be a connective and finite type $HA$-module. To prove that $C$ satisfies Condition \ref{cond 2}, we need to show that the natural map: 
\[\eta_C: C \longrightarrow C^{\vee\vee},\]
is a weak equivalence. We are going to show that this map induces an isomorphism on each homotopy group. Let $i \in \z$, to show that $\pi_i \eta_C $ is an isomorphism, we use the cofiber sequence:
\begin{equation} \label{eq first cofib sequence}
    \tau_{\geq n}C \longrightarrow C \longrightarrow \tau_{\leq n-1}C.
\end{equation}
Considering the natural transformation $\eta$ on this cofiber sequence, we obtain the following diagram where the vertical sequences are the canonical cofiber sequences:
\begin{equation} \label{diag cofib seq and the natural transformation}
\begin{tikzcd}
\tau_{\geq n}C \ar{r} \ar{d} & (\tau_{\geq n}C)^{\vee \vee} \ar[d]\\
C \ar{r} \ar[d] & C^{\vee \vee} \ar[d]\\
\tau_{\leq n-1}C \ar[r]&(\tau_{\leq n-1}C)^{\vee\vee}.  
\end{tikzcd}
\end{equation}
Let $n=i+d+1$. Due to Proposition \ref{prop dualizable ha modules}, $\tau_{\leq i+d}C$ is dualizable and therefore Corollary \ref{cor dualizable satisfy cond 1 and 2} implies that the bottom horizontal arrow is an equivalence. Since $\tau_{\geq i+d+1}C$ is $(i+d+1)$-connective, $(\tau_{\geq i+d+1}C)^\vee$ is $-(i+d+1)$-coconnective due to Lemma \ref{lem dualization and connectivity}. This, together with Lemma \ref{lem dualization and connectivity}  shows that $(\tau_{\geq i+d+1}C)^{\vee\vee}$ is $(i+1)$-connective. In particular, both sides of the top horizontal arrow are $(i+1)$-connective. The map of long exact sequences induced by the diagram of cofiber sequences above shows that $C \to C^{\vee\vee}$ induces an isomorphism in degree $i$ homotopy groups. This proves that $C$ satisfies Condition \ref{cond 2}.

For the case where $C$ is coconnective, we use the cofiber sequence in \eqref{eq first cofib sequence} for $n=i-d$.
In this case, $\tau_{\geq i-d}C$ is dualizable  due to Proposition \ref{prop dualizable ha modules} and therefore the top horizontal  arrow in Diagram \eqref{diag cofib seq and the natural transformation} is an equivalence. Also,  $\tau_{\leq i-d-1}C^{\vee\vee}$ is $(i-1)$-coconnective due to Lemma \ref{lem dualization and connectivity}. In particular, both sides of the bottom horizontal arrow are $(i-1)$-coconnective. It follows by considering the map of long exact sequences induced by Diagram \eqref{diag cofib seq and the natural transformation} that $\eta_C$ induces an isomorphism on degree $i$ homotopy. 

Now we prove that for every $C$ as in Theorem \ref{thm cothh duality in chains}, $C^{n\wdg_{HA}}$ also satisfies Condition \ref{cond 2} for every $n\geq 0$. For simplicity, we write $C^n$ for the $n$-fold smash product $C^{\wdg_{HA} n}.$ Note that $n=0$ case is satisfied trivially because $HA$ is dualizable. The $n=1$ case is what we prove above. We proceed by induction. Assume that this is true for some $(n-1)\geq 1$, we need to show that the natural transformation:
\[
\eta_{C^n}\co C^n \longrightarrow (C^n)^{\vee \vee},
\]
is a homotopy isomorphism, i.e.\ a weak equivalence. Let $C$ be connective and finite type. Given $i\in \z$, let $\ell=i+d$, there is a cofiber sequence: 
\[(\tau_{\geq \ell+1}C)\wdg_{HA} C^{n-1} \longrightarrow C \wdg_{HA} C^{n-1} \longrightarrow (\tau_{\leq \ell}C) \wdg_{HA} C^{n-1}. \]
Due to Lemma \ref{lem tensors and connectivity}, $(\tau_{\geq \ell+1}C)\wdg_{HA} C^{n-1}$ is $(i+d+1)$-connective and it follows from Lemma \ref{lem dualization and connectivity} that  $((\tau_{\geq \ell+1}C)\wdg_{HA} (C^{n-1}))^{\vee\vee}$ is $(i+1)$-connective. We apply the natural transformation $\eta$ to this cofiber sequence and consider the induced map of long exact sequences. This shows that in order to prove  $\pi_i\eta_{C^n}$ is an isomorphism, it is sufficient to show that: 
\[
\eta_{(\tau_{\leq \ell}C) \wdg_{HA} C^{n-1}}\co (\tau_{\leq \ell}C) \wdg_{HA} C^{n-1} \longrightarrow ((\tau_{\leq \ell}C) \wdg_{HA} C^{n-1})^{\vee\vee},\]
is an equivalence. Note that this map is an equivalence when $\tau_{\leq \ell}C$ is replaced by $HA$ by our induction hypothesis. Furthermore, both the domain and the codomain  of this natural transformation preserves triangles and retracts in the first factor of the smash product. Due to Proposition \ref{prop dualizable ha modules}, $\tau_{\leq \ell}C$ is dualizable and therefore it is generated by $HA$ under triangles and retracts. This shows that $\eta_{(\tau_{\leq \ell}C) \wdg_{HA} C^{n-1}}$ is also an equivalence.

We also need to show that $C^n$ satisfies Condition \ref{cond 2} under the induction hypothesis whenever $C$ is coconnective and finite type. Fix an integer $i$ and let $\ell$ be a negative integer such that:
\[\ell+nd<i-1 \textup{\ and \ } \ell+(n-1)d <i-1.\]
We need to show that $\pi_i\eta_{C^n}$ is an isomorphism. We consider the similar cofiber sequence:
\[(\tau_{\geq \ell+1}C)\wdg_{HA} C^{n-1} \longrightarrow C \wdg_{HA} C^{n-1} \longrightarrow (\tau_{\leq \ell}C) \wdg_{HA} C^{n-1}. \]
The right hand side is $(\ell+(n-1)d)$-coconnective and therefore its double dual is $(\ell+nd)$-coconnective. Therefore both the right hand side and its double dual are $(i-1)$-coconnective. Applying the natural transformation  $\eta$ to this cofiber sequence, one sees that it is sufficient to show that $\eta_{(\tau_{\geq \ell+1} C )\wdg_{HA} C^{n-1}}$ is an equivalence. 
This follows as before due to the fact that  $\tau_{\geq \ell+1} C$ is dualizable.
\end{proof}

\begin{lem}\label{lem C satisfies condition 1}
If $C$ is a (co)connective finite type $HA$-module then $C$ is weak quasi-dualizable, i.e.\ $C$ satisfies Condition \ref{cond 1}.
\end{lem}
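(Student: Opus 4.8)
The plan is to verify Condition \ref{cond 1} by reducing the $n$-fold statement to a binary one and then running the same Postnikov/connective-cover argument as in the proof of Lemma \ref{lem C and C ot C stsfy cndt 2}. Recall that Condition \ref{cond 1} asks that the lax monoidal structure map
\[
(C^\vee)^{\wdg_{HA} n} \longrightarrow (C^{\wdg_{HA} n})^\vee
\]
be an equivalence for every $n \geq 0$. The cases $n = 0,1$ hold automatically. Since $(-)^\vee$ is lax symmetric monoidal, its $n$-ary structure map factors as an iterate of binary structure maps, so for $n \geq 2$ the displayed map is the composite
\[
C^\vee \wdg_{HA} (C^\vee)^{\wdg_{HA}(n-1)} \xrightarrow{\id \wdg_{HA} \nu} C^\vee \wdg_{HA} (C^{\wdg_{HA}(n-1)})^\vee \longrightarrow (C^{\wdg_{HA} n})^\vee,
\]
where $\nu$ is the $(n-1)$-ary structure map and the second arrow is the binary structure map for the pair $(C, C^{\wdg_{HA}(n-1)})$. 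By Lemma \ref{lem tensors and connectivity}, $C^{\wdg_{HA}(n-1)}$ is again connective (resp.\ coconnective) when $C$ is, so by induction on $n$ it suffices to prove the following binary statement: if $C$ is a (co)connective finite type $HA$-module and $D$ is an $HA$-module that is connective (resp.\ coconnective) of the same variance as $C$, then the structure map $C^\vee \wdg_{HA} D^\vee \to (C \wdg_{HA} D)^\vee$ is an equivalence.

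The key input is that this binary map $X^\vee \wdg_{HA} D^\vee \to (X \wdg_{HA} D)^\vee$ is an equivalence whenever $X$ is dualizable, for an arbitrary $D$. Indeed, by Lemma \ref{lem: dualizable in closed monoidal cat} we have $[X, -] \simeq X^\vee \wdg_{HA} -$ for dualizable $X$, and under this identification the structure map becomes the adjunction equivalence $[X, D^\vee] = [X, [D, HA]] \simeq [X \wdg_{HA} D, HA] = (X \wdg_{HA} D)^\vee$.

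To prove the binary statement, fix a degree $i$ and argue as in Lemma \ref{lem C and C ot C stsfy cndt 2}. For $C$ connective, apply the natural structure map to the Postnikov cofiber sequence $\tau_{\geq m}C \to C \to \tau_{\leq m-1}C$ in the first variable; since both dualization and $(-) \wdg_{HA} D$ are exact, this yields a map of cofiber sequences from $(\tau_{\leq m-1}C)^\vee \wdg_{HA} D^\vee \to C^\vee \wdg_{HA} D^\vee \to (\tau_{\geq m}C)^\vee \wdg_{HA} D^\vee$ to the sequence $(\tau_{\leq m-1}C \wdg_{HA} D)^\vee \to (C \wdg_{HA} D)^\vee \to (\tau_{\geq m}C \wdg_{HA} D)^\vee$. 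By Proposition \ref{prop dualizable ha modules}, $\tau_{\leq m-1}C$ is dualizable, so the left vertical map is an equivalence by the previous paragraph. For the right vertical map, Lemmas \ref{lem dualization and connectivity} and \ref{lem tensors and connectivity} show that both $(\tau_{\geq m}C)^\vee \wdg_{HA} D^\vee$ and $(\tau_{\geq m}C \wdg_{HA} D)^\vee$ are coconnective with bound tending to $-\infty$ as $m \to \infty$, hence both vanish in degrees $i$ and $i+1$ once $m$ is large enough. The induced map of long exact sequences then forces $\pi_i$ of the middle map to be an isomorphism. For $C$ coconnective one argues symmetrically, using the connective cover $\tau_{\geq m}C \to C \to \tau_{\leq m-1}C$ with $m \to -\infty$: now $\tau_{\geq m}C$ is dualizable and provides the equivalence on the complementary term, while $(\tau_{\leq m-1}C)^\vee \wdg_{HA} D^\vee$ and $(\tau_{\leq m-1}C \wdg_{HA} D)^\vee$ are connective with bound tending to $+\infty$ and thus vanish in degrees $i-1$ and $i$.

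The main obstacle is the connectivity bookkeeping: one must check that the truncation which is dualizable sits on the correct side of the cofiber sequence and that, after dualizing and smashing with $D^\vee$, the complementary error term is concentrated away from the fixed degree $i$. This is exactly where the finite global dimension $d$ and the finite type hypothesis enter, through the estimates of Lemmas \ref{lem dualization and connectivity} and \ref{lem tensors and connectivity} together with the dualizability criterion of Proposition \ref{prop dualizable ha modules}. The only other point requiring care is the coherence assertion that the $n$-ary structure map of the lax monoidal functor $(-)^\vee$ is the stated iterate of binary maps, which is formal.
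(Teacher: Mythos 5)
Your proof is correct and follows essentially the same strategy as the paper's: induct on $n$ to reduce to the binary structure map with second factor $C^{\wdg_{HA}(n-1)}$, truncate $C$ via the Postnikov/connective-cover cofiber sequence, use Proposition \ref{prop dualizable ha modules} to handle the dualizable truncation, and use the (co)connectivity estimates of Lemmas \ref{lem dualization and connectivity} and \ref{lem tensors and connectivity} to push the error term out of the fixed degree. The only cosmetic differences are that you isolate the binary statement for an arbitrary bounded second factor $D$ and justify the dualizable case via the adjunction identification $[X,-]\simeq X^\vee\wdg_{HA}-$, whereas the paper keeps $D=C^{\wdg_{HA}(n-1)}$ throughout and instead argues that both sides of the structure map preserve triangles and retracts in the first variable, so that the case $X=HA$ suffices.
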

\begin{proof}
As before, we write $C^n$ for the $n$-fold smash product $C^{\wdg_{HA}n}.$ By induction, it is sufficient to show that the lax symmetric monoidal structure map: 
\[\varphi \co C^\vee   \wdg_{HA}(C^{n-1})^\vee \to (C^n)^\vee,\]
is an equivalence for every $n$.

We first prove this for the case where $C$ is a connective and finite type $HA$-module. For  a given integer $i$, we need to show that the map above induces an isomorphism in degree $i$ homotopy. Let $\ell$ be an integer such that $-\ell+d<i-1$ and $-\ell <i-1$. There is a cofiber sequence: 
\[\tau_{\geq \ell}C \to C \to \tau_{\leq \ell-1} C.\]
Recall the that dualization functor is a contravariant functor. From this cofiber sequence and the natural transformation $\varphi$, we obtain the following diagram:
\begin{equation}\label{diag cond 1 for finite type things}
\begin{tikzcd}[column sep=small]
(\tau_{\leq \ell-1}C)^\vee \wdg_{HA} (C^{n-1})^\vee \ar{r} \ar{d} & ((\tau_{\leq \ell-1}C) \wdg_{HA} C^{n-1})^\vee\ar[d]\\
C^\vee \wdg_{HA}(C^{n-1})^\vee\ar{r} \ar[d] & (C \wdg_{HA} C^{n-1})^\vee \ar[d].\\
(\tau_{\geq \ell}C)^\vee \wdg_{HA} (C^{n-1})^\vee \ar[r]&((\tau_{\geq \ell}C) \wdg_{HA }C^{n-1})^\vee.
\end{tikzcd}
\end{equation}
The vertical lines above are cofiber sequences and the horizontal maps are given by lax monoidal structure map of the dualization functor. Since $\tau_{\geq \ell}C$ is $\ell$-connective and $C$ is $0$-connective, $(\tau_{\geq \ell}C)^\vee$ is $-\ell$-coconnective and $(C^{n-1})^\vee$ is $0$-coconnective due to Lemmas \ref{lem dualization and connectivity} and \ref{lem tensors and connectivity}. This, together with Lemma \ref{lem tensors and connectivity} shows that the bottom left corner is $(-\ell+d)$-coconnective; in particular, it is $(i-1)$-coconnective. Similarly, the bottom right corner is $-\ell$-coconnective and therefore $(i-1)$-coconnective. 

The map of long exact sequences induced by the diagram above shows that it is sufficient to prove that the top horizontal arrow is an equivalence. 
Note that $\tau_{\leq \ell-1}C$ is dualizable in $HA$-modules due to Proposition \ref{prop dualizable ha modules}. Therefore, $\tau_{\leq \ell-1}C$ can be obtained from $HA$ via finitely many triangles and retracts. Note that the arrow above is an equivalence if we replace  $\tau_{\leq \ell-1}C$ by $HA$ and that both the domain and the codomain of the top horizontal arrow above preserve triangles and retracts in the first variable. Since $\tau_{\leq \ell-1}C$  can be built from $HA$ via triangles and retracts, this shows that the top arrow in the diagram above is an equivalence.

Finally, we need to show that an $HA$-module $C$ satisfies Condition 1 whenever it is coconnective and finite type. Fix $i$ and let $\ell$ be an integer such that: 
\[-\ell+1-nd>i+1.\] We need to show that $\pi_i\varphi$ is an isomorphism. Again, we consider Diagram (\ref{diag cond 1 for finite type things}). In this case, $\tau_{\leq \ell-1} C$ is $(\ell-1)$-coconnective and $C$ is $0$-coconnective. Therefore, $(\tau_{\leq \ell-1} C)^\vee$
is $(-\ell+1-d)$-connective and $C^{n-1}$ is $(n-2)d$-coconnective. Also, $(C^{n-1})^\vee$ is $-(n-1)d$-connective. Therefore, the domain of the top horizontal map in Diagram (\ref{diag cofib seq and the natural transformation}) is 
 and $(-\ell+1-dn)$-connective. Similarly, the codomain of the top  horizontal map is also $(-\ell+1-dn)$-connective. In particular, both the domain and the codomain of the top horizontal map are $(i+1)$-connective.  Therefore, it is sufficient to show that the bottom horizontal map is an equivalence.
 
This follows in a way similar to the connective case. Note that $\tau_{\geq \ell}C$ is dualizable due to Proposition \ref{prop dualizable ha modules}. The bottom horizontal map is an equivalence if $\tau_{\geq \ell}C$ is replaced by $HA$. The domain and the codomain of this map preserves triangles and retracts in the first variable and $\tau_{\geq \ell}C$  can be built from $HA$ via retracts and triangles. Therefore, the bottom horizontal map is also an equivalence as desired.
\end{proof}

\subsection{A Duality Between Algebras and Coalgebras in \texorpdfstring{$\hz$}{TEXT}-modules}\label{subsection duality between algebras and coalgebras in chains}

In this section, we prove a duality result between algebras and coalgebras in the $\infty$-category of chain complexes over a general discrete commutative ring $B$. For the theorem below, let 
\[
\alg_{\O}(\Mod_{HB})_{\mathsf{fft}} \textup{  and  } \coalg_{\O}(\Mod_{HB})_{\mathsf{fft}},
\]
denote the full subcategory of finite type $\O$-algebras in $HB$-modules  whose homotopy groups are  free $B$-modules and the full subcategory of finite type $\O$-coalgebras in $HB$-modules whose homotopy groups are free $B$-modules respectively. Furthermore, ${}_{\geq 0}(-)$ and ${}_{\leq0}(-)$ denotes the restriction to the full subcategories of connective and coconnective and objects respectively. 

\begin{thm}\label{thm equivalence of connective coalgebras and coconnective coalgebras in chains}
There are equivalences of $\infty$-categories: 
\[{}_{\geq 0}\coalg_{\O}(\Mod_{HB})_{\mathsf{fft}}^{\mathsf{op}} \simeq {}_{\leq0}\alg_{\O}(\Mod_{HB})_{\mathsf{fft}},\]
and: 
\[ {}_{\leq0}\coalg_{\O}(\Mod_{HB})_{\mathsf{fft}}^{\mathsf{op}} \simeq  {}_{\geq 0}\alg_{\O}(\Mod_{HB})_{\mathsf{fft}},\]
given by the dualization functor $(-)^\vee$ in all directions.
\end{thm}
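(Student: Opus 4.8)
The plan is to obtain both equivalences by restricting the anti-equivalence of Theorem \ref{thm: anti-equivalence between quasi-proper (co)algebras} to the subcategories in question. The essential observation I would exploit is that the freeness hypothesis forces the Künneth and Ext spectral sequences of \cite[\RomanNumeralCaps{4}.4.1]{elmendorf2007rings} to collapse, so that smash products and linear duals of these $HB$-modules are computed on homotopy groups by the corresponding algebraic operations on free graded $B$-modules. This is what lets us work over an arbitrary discrete commutative ring $B$, bypassing the finite-global-dimension and Postnikov-tower arguments of the previous subsection.

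First I would record the homotopy computations. For an $HB$-module $X$ with $\pi_*X$ free of finite type, the Ext spectral sequence degenerates (as $\pi_*X$ is projective), giving
\[\pi_{-n}(X^\vee) \cong \Hom_B(\pi_n X, B).\]
Since each $\pi_n X$ is finitely generated free, so is its dual; hence $X^\vee$ is again of finite type with free homotopy, and $X$ connective forces $X^\vee$ coconnective, and vice versa. Likewise the Künneth spectral sequence degenerates (as $\pi_*X$ is flat), giving $\pi_*(X^{\wdg_{HB} n}) \cong (\pi_*X)^{\otimes_B n}$; in particular smash powers of connective (resp.\ coconnective) finite-type free modules remain connective (resp.\ coconnective), finite type and free.

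Next I would verify Conditions \ref{cond 1} and \ref{cond 2} for such $X$. For Condition \ref{cond 2}, the identifications above give $\pi_n(X^{\vee\vee}) \cong (\pi_n X)^{\vee\vee} \cong \pi_n X$ for finitely generated free $\pi_n X$, and the natural map $X \to X^{\vee\vee}$ realizes this isomorphism via the edge homomorphism, so it is an equivalence. For Condition \ref{cond 1} one compares $(X^\vee)^{\wdg_{HB} n} \to (X^{\wdg_{HB} n})^\vee$, and here the (co)connectivity is crucial: in any fixed total degree $m$ only finitely many summands $\pi_{i_1}X \otimes \cdots \otimes \pi_{i_n}X$ with $i_1 + \cdots + i_n = m$ are nonzero, since the constraint $i_j \geq 0$ (resp.\ $i_j \leq 0$) leaves finitely many tuples. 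Thus each graded piece is finitely generated free and the canonical map $\bigotimes_j \Hom_B(\pi_{i_j}X,B) \to \Hom_B(\bigotimes_j \pi_{i_j}X, B)$ is a degreewise isomorphism, so Condition \ref{cond 1} holds. I expect this to be the main obstacle, and it is exactly the step that fails without the (co)connectivity bound, as Example \ref{ex cond 2 doesnt imply cond 1} shows.

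Finally I would assemble the conclusion. By the previous step a connective finite-type free $\O$-coalgebra $C$ satisfies Conditions \ref{cond 1} and \ref{cond 2}, and each $C^{\wdg_{HB}n}$ satisfies Condition \ref{cond 2}; hence $C$ is quasi-proper by Proposition \ref{prop cond 1 2 and cond 2 on c ot c imply quasi-proper}. Dually, a coconnective finite-type free $\O$-algebra is quasi-dualizable and so quasi-proper. The first step shows that $(-)^\vee$ carries ${}_{\geq 0}\coalg_\O(\Mod_{HB})_{\mathsf{fft}}$ into ${}_{\leq 0}\alg_\O(\Mod_{HB})_{\mathsf{fft}}$ and conversely, and these restrictions are mutually inverse up to the double-dual equivalence of Theorem \ref{thm: anti-equivalence between quasi-proper (co)algebras}. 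Restricting an equivalence to full subcategories that are carried into one another then yields ${}_{\geq 0}\coalg_\O(\Mod_{HB})_{\mathsf{fft}}^{\mathsf{op}} \simeq {}_{\leq 0}\alg_\O(\Mod_{HB})_{\mathsf{fft}}$, and the second equivalence follows by the symmetric argument exchanging the roles of connective and coconnective.
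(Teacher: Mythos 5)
Your proposal is correct, and at the top level it follows the paper's strategy exactly: both obtain the two equivalences by restricting the anti-equivalence of Theorem \ref{thm: anti-equivalence between quasi-proper (co)algebras} to the (co)connective finite-type-free subcategories, the real content being that such modules are quasi-dualizable and their duals weak quasi-dualizable, after which restriction plus double duality gives essential surjectivity. Where you genuinely differ is in how that key lemma is established. The paper (Lemma \ref{lem free things satisfy cond 1 and cond 2}) observes that a connective finite-type free $HB$-module $M$ is a wedge of suspensions of $HB$, so each truncation $\tau_{\leq n}M$ is a \emph{finite} wedge and hence dualizable; Conditions \ref{cond 1} and \ref{cond 2} for $M$ are then deduced degree by degree by comparing the relevant natural maps with those for the truncation, where they are equivalences for formal reasons. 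You instead compute directly on homotopy: the Ext and K\"unneth spectral sequences degenerate because the homotopy groups are free, and the (co)connectivity bound guarantees that only finitely many summands $\pi_{i_1}X\otimes\cdots\otimes\pi_{i_n}X$ contribute in each total degree, so the canonical algebraic maps are degreewise isomorphisms between finitely generated free modules. Your route is more elementary and makes transparent exactly where (co)connectivity enters (matching Example \ref{ex cond 2 doesnt imply cond 1}); its one soft spot is the assertion that the topological structure maps (the unit $X\to X^{\vee\vee}$ and the lax monoidal comparison map) induce precisely the canonical algebraic evaluation and pairing maps on homotopy. This is true by naturality of the spectral sequences, but it is exactly the identification the paper's truncation comparison is designed to avoid having to articulate, so it deserves an explicit sentence rather than an appeal to ``the edge homomorphism.'' The final assembly --- quasi-properness of coalgebras via Proposition \ref{prop cond 1 2 and cond 2 on c ot c imply quasi-proper} and of algebras via quasi-dualizability, then restriction of the equivalence --- matches the paper.
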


To prove this theorem, we use Theorem \ref{thm: anti-equivalence between quasi-proper (co)algebras} for the case $\C = \Mod_{HB}$. This shows that there is an equivalence of $\infty$-categories:
\begin{equation}\label{eq equivalence of infty categories restated}
    ((\coalg_\O(\Mod_{HB}))_\qdual)\op \simeq (\alg_\O(\Mod_{HB}))_\qdual,
\end{equation}
given in both directions by the dualization functor in $HB$-modules. We show that the equivalences of $\infty$-categories given in Theorem \ref{thm equivalence of connective coalgebras and coconnective coalgebras in chains} are given by restrictions of the equivalence above to the mentioned subcategories. The lower script $\qdual$ in \eqref{eq equivalence of infty categories restated} denotes the restriction to the quasi-proper objects, see Definition \ref{def quasi proper}. Recall that an $\O$-coalgebra $C$ in $HB$-modules is quasi-proper if $C$ satisfies Condition \ref{cond 2}
 and $C^\vee$ satisfies Condition \ref{cond 1}. On the other hand, an $\O$-algebra $X$ in $HB$-modules is said to be quasi-proper if it satisfies Conditions \ref{cond 1} and \ref{cond 2} in $HB$-modules.

 \begin{lem}\label{lem free things satisfy cond 1 and cond 2}
 Let $M$ be a (co)connective $HB$-module whose homotopy groups are finite dimensional free $B$-modules. In this situation, $M$ satisfies Conditions \ref{cond 1} and \ref{cond 2}. Furthermore, $M^\vee$ also satisfies Condition \ref{cond 1}. In other words, $M$ is quasi-dualizable and $M^\vee$ is weak-quasi dualizable in $HB$-modules.
 \end{lem}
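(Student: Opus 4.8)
The plan is to reduce the statement to the arguments already carried out in Lemmas \ref{lem C and C ot C stsfy cndt 2} and \ref{lem C satisfies condition 1}. Those proofs invoke the Noetherian and finite-global-dimension hypotheses only through two channels: the dualizability of Postnikov truncations supplied by Proposition \ref{prop dualizable ha modules}, and the connectivity estimates of Lemmas \ref{lem dualization and connectivity} and \ref{lem tensors and connectivity}, which carry the global dimension $d$. The freeness hypothesis should let me replace both channels cleanly, effectively setting $d=0$, so that those arguments go through over an arbitrary discrete commutative ring $B$.

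First I would record the structural facts that freeness buys. Since each $\pi_iM$ is finitely generated free, hence both projective and flat, the Ext spectral sequence and the K\"unneth spectral sequence of \cite[IV.4.1]{elmendorf2007rings} each collapse onto the single line at $E_2$. The collapse of the Ext spectral sequence yields $\pi_n(M^\vee)\cong \Hom_B(\pi_{-n}M,B)$, so that $M^\vee$ is again a (co)connective $HB$-module whose homotopy groups are finitely generated free $B$-modules (coconnective when $M$ is connective, and conversely). The collapse of the K\"unneth spectral sequence yields $\pi_*(M^{\wdg_{HB}n})\cong (\pi_*M)^{\ot_B n}$, which is finitely generated free in each degree, since the one-sided connectivity of $M$ makes each degree a finite sum of tensor products of finitely generated free modules. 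In particular the analogues of Lemmas \ref{lem dualization and connectivity} and \ref{lem tensors and connectivity} hold with $d$ replaced by $0$. (One may also see these identifications directly from the splitting $M\simeq \bigoplus_i \Sigma^i (HB)^{k_i}$ and its dual product $M^\vee\simeq \prod_i \Sigma^{-i}(HB)^{k_i}$.)

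Next I would observe that a \emph{bounded} $HB$-module $N$ with finitely generated free homotopy groups is dualizable with no hypothesis on $B$: the obstructions to splitting its Postnikov tower lie in $\Ext^{s}_B$-groups with $s\ge 2$ of projective modules, hence vanish, so $N$ splits as a finite coproduct $\bigoplus_i \Sigma^i (HB)^{k_i}$ of shifts of the unit, and a finite coproduct of dualizable objects is dualizable. This replaces Proposition \ref{prop dualizable ha modules}, whose only use was to supply dualizability of the truncations appearing in the cofiber-sequence chases. With these two substitutions, the Postnikov-truncation argument of Lemmas \ref{lem C and C ot C stsfy cndt 2} and \ref{lem C satisfies condition 1} (connective case) and its connective-cover variant (coconnective case) apply verbatim, using Corollary \ref{cor dualizable satisfy cond 1 and 2} on the dualizable truncations, to show that $M$ satisfies Conditions \ref{cond 1} and \ref{cond 2}. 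Finally, for the claim that $M^\vee$ satisfies Condition \ref{cond 1}, I would simply note that by the second paragraph $M^\vee$ is itself a (co)connective $HB$-module with finitely generated free homotopy groups, so the statement just established for such modules applies directly to $M^\vee$. The one point requiring care is the collapse and convergence of the two spectral sequences when $M$ has infinitely many nonzero homotopy groups; this is harmless because each is concentrated on a single line and hence converges strongly, and it is precisely this that lets freeness do the work played by the Noetherian and finite-global-dimension hypotheses in the preceding subsection.
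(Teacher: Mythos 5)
Your proposal is correct and follows essentially the same route as the paper: both arguments rest on the two pillars you identify, namely that bounded truncations of $M$ are finite wedges of shifts of $HB$ and hence dualizable, and that the Ext and K\"unneth spectral sequences collapse so that duals and smash powers are computed degreewise on homotopy (your ``$d=0$'' observation), with the claim for $M^\vee$ following because $M^\vee$ lands back in the same class of modules. The only difference is that the paper, after noting exactly as you do that one could rerun the long-exact-sequence chases of Lemmas \ref{lem C and C ot C stsfy cndt 2} and \ref{lem C satisfies condition 1}, instead takes the shortcut of comparing $\eta_M$ with $\eta_{\tau_{\leq i+1}M}$ directly on degree-$i$ homotopy groups.
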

 \begin{proof}
 We first prove this for the case where $M$ is connective. Note that the homotopy classes of maps of $HB$-modules with free homotopy groups are given by graded $B$-module maps of the corresponding homotopy groups. This follows by the Ext spectral sequence computing mapping spectra \cite[\RomanNumeralCaps{4}.4.1]{elmendorf2007rings}. The Ext spectral sequence also shows that $M$ is a wedge of suspensions of $HB$. One could also see this by choosing maps $\Sigma^k HB \to M$ for every basis element in $\pi_kM$ and taking a coproduct over these maps. 
 
 In particular, the Postnikov section $\tau_{\leq n}M$ is a wedge of finitely many suspensions of $HB$. Since dualizable $HB$-modules are those that can be obtained from $HB$ via finitely many triangles and retracts \cite[3.2.3]{lurie2}, this shows that $\tau_{\leq n}M$ is dualizable for every $n$. In particular, $\tau_{\leq n}M$ and $(\tau_{\leq n}M)^\vee$ satisfies Conditions \ref{cond 1} and \ref{cond 2}, see Corollary \ref{cor dualizable satisfy cond 1 and 2}. 
 
 At this point, we could argue as in the proof of Lemmas \ref{lem C and C ot C stsfy cndt 2} and \ref{lem C satisfies condition 1} but this situation is simpler. For instance, to show that the map
 \[M\to M^{
 \vee\vee}\]
 is an isomorphism in degree $i$ homotopy, we use the map $M \to \tau_{\leq i+1}M$ to compare the map above with the map: 
 \[\tau_{\leq i+1}M \to \tau_{\leq i+1}M^{\vee\vee}.\]
 Since dualization in these cases results in dualization of graded $B$-modules at the level of homotopy, these two maps agree on degree $i$ homotopy and the second map is an equivalence because $\tau_{\leq i+1}M$ is dualizable. This shows that $M$ satisfies Condition \ref{cond 2}. 
 
To see that $M$ satisfies Condition \ref{cond 1}, note that smash products  of $HB$-modules with free homotopy are given by tensor products over $B$ at the level of homotopy groups. This follows by the K\"unneth spectral sequence \cite[\RomanNumeralCaps{4}.4.1]{elmendorf2007rings}. Again, to show that the relevant map
 \[(M^\vee)^{n \wdg_{HB}} \to (M^{n\wdg_{HB}})^\vee\]
gives an equivalence in degree $i$ homotopy, we use the map $M\to \tau_{\leq i+1} M$ to compare this with the map
 \[((\tau_{\leq i+1} M)^\vee)^{n \wdg_{HB}} \to ((\tau_{\leq i+1} M)^{n\wdg_{HB}})^\vee.\] 
 which is an equivalence because $\tau_{\leq i+1} M$ is dualizable. Observing that these maps agree on degree $i$ homotopy, we deduce that the first map is also an equivalence and that $M$ satisfies Condition \ref{cond 1}. 
 
 Using the Ext spectral sequence one more time, one observes that $M^\vee$ is a coconnective $HB$-module where each homotopy group is a  finite dimensional free $B$-module. By using the connective cover functor $\tau_{\geq i} -$ instead of the Postnikov section functor $\tau_{\leq i} -$, one argues as before to see that $M^\vee$ also satisfies Condition \ref{cond 1}. The coconnective case of the lemma also follows similarly. 
 \end{proof}
 
 \begin{proof}[Proof of Theorem \ref{thm equivalence of connective coalgebras and coconnective coalgebras in chains}]
 
 We start with the proof of the first equivalence in the theorem. We show that this equivalence is a restriction of the equivalence  in \eqref{eq equivalence of infty categories restated} to a full subcategory. For this, it is sufficient to show that  that the $\infty$-category
 \[{}_{\geq 0}\coalg_{\O}(\Mod_{HB})_{\mathsf{fft}}^{\mathsf{op}}\]
 is a full subcategory of  $((\coalg_\O(\Mod_{HB}))_\qdual)\op$ and the essential image of this full subcategory is given by \[{}_{\leq0}\alg_{\O}(\Mod_{HB})_{\mathsf{fft}},\] 
 under the dualization functor. 
 For the first part, we need to show that every $C$ in ${}_{\geq 0}\coalg_{\O}(\Mod_{HB})_{\mathsf{fft}}^{\mathsf{op}}$ is quasi-proper, i.e.\ $C$ satisfies Condition \ref{cond 2} and $C^\vee$ satisfies Condition \ref{cond 1}. This follows by Lemma \ref{lem free things satisfy cond 1 and cond 2}. 
 
 What is left is to show that the essential image of  ${}_{\geq 0}\coalg_{\O}(\Mod_{HB})_{\mathsf{fft}}^{\mathsf{op}}$ under the dualization functor is ${}_{\leq0}\alg_{\O}(\Mod_{HB})_{\mathsf{fft}}$. First, we show that for every $C$ in ${}_{\geq 0}\coalg_{\O}(\Mod_{HB})_{\mathsf{fft}}^{\mathsf{op}}$, $C^\vee$ lies in ${}_{\leq0}\alg_{\O}(\Mod_{HB})_{\mathsf{fft}}$. This follows by the Ext spectral sequence showing that on an $HB$-module whose homotopy groups are  free $B$-modules, the dualization functor results in graded $B$-module dualization at the level of homotopy groups. Given $X$ in ${}_{\leq0}\alg_{\O}(\Mod_{HB})_{\mathsf{fft}}$, we need to show that $X= C^\vee$ for some $C$ in ${}_{\geq 0}\coalg_{\O}(\Mod_{HB})_{\mathsf{fft}}^{\mathsf{op}}$. For this, note that $X$ is quasi-dualizable due to Lemma \ref{lem free things satisfy cond 1 and cond 2}, therefore $X$ lies in: 
 \[(\alg_\O(\Mod_{HB}))_\qdual.\]
 The equivalence in \eqref{eq equivalence of infty categories restated} shows that $(X^\vee)^\vee \simeq X$ and that $X^\vee$ is an $\O$-coalgebra in $HB$-modules. It follows by inspection on homotopy groups that  $X^\vee$ lies in ${}_{\geq 0}\coalg_{\O}(\Mod_{HB})_{\mathsf{fft}}^{\mathsf{op}}$ as desired. This finishes the proof of the first equivalence in the theorem.
 The proof of the second equivalence follows in the same way.
 \end{proof}

\section{Coalgebras in Spectra}\label{sec examples of coalgebras in spectra}

We have introduced the definition of topological coHochschild homology associated to an $\ee$-coalgebra in any symmetric monoidal $\infty$-category. We are interested here in $\ee$-coalgebras in the $\infty$-category of spectra $\sp$ or more generally in $\ee$-coalgebra in $R$-modules, where $R$ is a commutative ring spectrum.

In symmetric monoidal model categories of spectra, such as symmetric spectra, an $\mathbb{S}$-coalgebra structure is quite a restrictive feature on a spectrum. However, unlike for algebras, the category of strictly coassociative and counital $\mathbb{S}$-coalgebras do not represent the $\mathbb{A}_\infty$-coalgebras in $\sp$. See \cite{perouxshipley} and \cite{perouxDKloc}. The main point is that the model categories only capture coalgebras that behave as the cocommutative $\sph$-coalgebra $\Sigma^\infty_+ X$ with comultiplication induced by the diagonal:
\[
X_+ \longrightarrow (X\times X)_+\cong X_+\wedge X_+,
\]
where $X$ is a space and $X_+$ is the free pointed space on $X$. In particular, strictly coassociative and counital $\sph$-coalgebras do not capture the Spanier-Whitehead duality between $\ee$-algebras and $\ee$-coalgebras in finite spectra of Corollary \ref{cor: antiequivalence proper (co)algebras}.

Nonetheless, we show in this section that an $\ee$-coalgebra structure on a spectrum $C$ is also a restrictive feature. If $R$ is a commutative ring spectrum, then an $\ee$-algebra in $R$-modules in $\sp$ is also an $\ee$-algebra in $\sp$. In other words, we have that spectra in $\alg_\ee(\sp)$ require less structure than spectra in $\alg_\ee(\Mod_R(\sp))$.
This is not true for coalgebras. In fact, we can say that spectra in $\coalg_\ee(\sp)$ have a more restrictive structure than spectra in $\coalg_\ee(\Mod_R(\sp))$. For instance, if $C$ is an $R$-module endowed with an $\ee$-coalgebra structure in $\sp$, then $C$ is an $\ee$-coalgebra in $R$-modules in $\sp$.

One of the main restrictive feature for an $\ee$-algebra $C$ in $\sp$ is the requirement of a map $C\rightarrow \sph$ that is counital (up to higher homotopy).

\begin{thm} \label{thm connective  coalgebras are suspension spectra}
Let $C$ be a connective $\ee$-coalgebra in the $\infty$-category of spectra. If $\pi_0 C \neq 0$ then the $p$-localization of $C$ contains the $p$-local sphere spectrum $\Sp_{(p)}$ as a retract in the stable homotopy category for every $p\gg0$. 
\end{thm}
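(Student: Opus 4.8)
The plan is to pass to $\pi_0$, where the $\ee$-coalgebra structure collapses to an ordinary coassociative counital $\z$-coalgebra, and then to split the counit after $p$-localization for all but finitely many $p$. First I would extract from the $\ee$-coalgebra structure on $C$ the comultiplication $\Delta\co C\to C\wdg C$ and the counit $\epsilon\co C\to \sph$ (the structure map to the monoidal unit $\sph$), together with the counit identity
\[
(\epsilon\wdg \id)\circ \Delta \simeq \id_C,
\]
obtained after the canonical equivalence $\sph\wdg C\simeq C$. Only coassociativity and this counit relation, up to homotopy, will be used.

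The key input is that $\pi_0\co \sp_{\geq 0}\to \mathsf{Ab}$ is strong symmetric monoidal: for connective $X$ and $Y$ the lax structure map
\[
\pi_0 X\ot_\z \pi_0 Y \longrightarrow \pi_0(X\wdg Y)
\]
is an isomorphism. I would prove this by reducing to Eilenberg--Mac Lane spectra via the truncation maps $X\to \tau_{\leq 0}X = H\pi_0 X$, using that smashing a connective spectrum with a $1$-connective one remains $1$-connective, so these maps are $\pi_0$-isomorphisms, and then $\pi_0(HA\wdg HB)\cong A\ot_\z B$. By Remark \ref{rem: strong monoidal=strong comonoidal}, this strong monoidal functor carries the $\ee$-coalgebra $C$ to a coassociative counital coalgebra $\pi_0 C$ in $\mathsf{Ab}$, with comultiplication $\pi_0\Delta\co \pi_0 C\to \pi_0 C\ot_\z \pi_0 C$ and counit $\pi_0\epsilon\co \pi_0 C\to \z$.

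Now the counit identity on $\pi_0 C$ reads $(\pi_0\epsilon\ot \id)\circ \pi_0\Delta = \id_{\pi_0 C}$; hence if $\pi_0\epsilon=0$ then $\id_{\pi_0 C}=0$, forcing $\pi_0 C=0$. Since $\pi_0 C\neq 0$ by hypothesis, the counit $\pi_0\epsilon$ is nonzero, so its image is $n\z$ for a unique integer $n\geq 1$. For every prime $p\nmid n$ --- that is, for all but finitely many primes, i.e.\ $p\gg 0$ --- the localized counit $(\pi_0 C)_{(p)}\to \z_{(p)}$ is surjective, so there is a class $x\in \pi_0 C_{(p)}=[\Sp_{(p)},C_{(p)}]$ with $(\pi_0\epsilon)(x)=1$.

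Finally I would assemble the retract: the class $x$ is a map $i\co \Sp_{(p)}\to C_{(p)}$, and the composite $\epsilon_{(p)}\circ i$ is multiplication by $(\pi_0\epsilon)(x)=1$ in $[\Sp_{(p)},\Sp_{(p)}]=\z_{(p)}$, hence homotopic to $\id_{\Sp_{(p)}}$. Thus $\epsilon_{(p)}$ exhibits $\Sp_{(p)}$ as a retract of $C_{(p)}$ in the stable homotopy category. The main obstacle is the second step: verifying that $\pi_0$ is strong symmetric monoidal on connective spectra (the K\"unneth isomorphism on $\pi_0$) and that the counit relation descends faithfully, since everything downstream is then an elementary computation with the resulting $\z$-coalgebra.
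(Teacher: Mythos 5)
Your proof is correct and follows essentially the same route as the paper's: both pass to $\pi_0$ using the K\"unneth isomorphism for connective spectra to see that the counit identity forces $\pi_0\epsilon\neq 0$, and then invert the generator of its image after $p$-localization to split off $\Sp_{(p)}$. The only cosmetic differences are that you establish the $\pi_0$-K\"unneth isomorphism by truncating to Eilenberg--Mac Lane spectra where the paper cites the K\"unneth spectral sequence, and you take $p\nmid n$ where the paper takes $p$ larger than the minimal positive element of the image --- both suffice for $p\gg 0$.
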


\begin{proof}
Since $\C$ is an $\ee$-coalgebra in $\sp$, it is in particular a coassociative counital coalgebra up to homotopy: i.e. it is a strictly coassociative counital coalgebra in the homotopy category of $\sp$.
Let $\Delta$ denote the comultiplication map and $\epsilon$ denote the counit map of $C$ in the homotopy category of $\sp$ respectively. The following composite is homotopic to the identity map on $C$:
\begin{equation} \label{eq non example diagram}
    C \xrightarrow{\Delta} C \wedge C \xrightarrow{\epsilon \wedge id_C} \Sp \wedge C \simeq C,
\end{equation}
where the smash product denotes the derived smash product. Since $C$ is connective, by the K\"unneth spectral sequence  \cite[\rom{4}.4.1]{elmendorf2007rings} we have:  
\[\pi_0(C \wedge C) \cong \pi_0C \otimes \pi_0 C \textup{\ and\ } \pi_0(\Sp \wedge C) \cong \pi_0\Sp \otimes \pi_0C.\]
We apply $\pi_0(-)$ to \eqref{eq non example diagram} and due to the functoriality of the K\"unneth spectral sequence, we obtain the following composite: 
\[\pi_0C \to \pi_0 C \otimes \pi_0 C \xrightarrow{\pi_0 \epsilon \otimes \pi_0 id_C} \z \otimes \pi_0C \cong \pi_0C. \]
If $\pi_0 \epsilon$ is the trivial map, it follows that the composite above is trivial but this contradicts the fact that this composite is the identity map and that $\pi_0C \neq 0$. Therefore $\pi_0 \epsilon$ is non-trivial. Let $m$ be the smallest positive integer in the image of $\pi_0 \epsilon$ and let 
\[c \co \Sp \to C, \] 
denote a map whose image under $\pi_0 \epsilon$ is $m$. If $m = 1$, then this makes sure that $C$ contains the sphere spectrum $\Sp$ as a retract in the stable homotopy category through the composite: 
\[\Sp \xrightarrow{c} C \xrightarrow{\epsilon} \Sp.\]
For $m>1$, let $p$ be a prime greater than $m$. In this situation, $m$ is a unit in $\mathbb{Z}_{(p)}$ and therefore a unit in $\pi_0 (\Sp_{(p)})$. Therefore, there is a map $m^{-1} \co \Sp_{(p)} \to \Sp_{(p)}$ representing $m^{-1}$ in $\pi_0 (\Sp_{(p)})$. We obtain the following composite: 
\[\Sp_{(p)} \xrightarrow{c} C_{(p)} \xrightarrow{\epsilon} \Sp_{(p)} \xrightarrow{m^{-1}}\Sp_{(p)},\]
which is the identity map of $\Sp_{(p)}$ (in the homotopy category) as desired. 
In particular, this implies that $\pi_*C_{(p)}$ contains the $p$-local stable homotopy groups of spheres for every $p>m$. 
\end{proof}

By inspection on the homotopy groups of various spectra, we obtain the following.

\begin{cor}
The spectra $ko$, $ku$, $MU$, connective covers of the Morava E-theory $E_n$ and $K(n)$ are not $\ee$-coalgebras in the $\infty$-category of spectra $\sp$. For a discrete commutative ring $A$, the Eilenberg-Mac Lane spectrum $HA$ is not an $\ee$-coalgebra in $\sp$.
\end{cor}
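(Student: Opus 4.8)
The plan is to deduce everything from Theorem \ref{thm connective coalgebras are suspension spectra}. Each spectrum in the list is connective (taking connective covers where indicated) and has nonzero $\pi_0$: indeed $\pi_0 ko = \pi_0 ku = \pi_0 MU = \z$, we have $\pi_0 HA = A \neq 0$, the connective cover of $E_n$ has $\pi_0$ the Lubin--Tate ring, and the connective cover of $K(n)$ has $\pi_0 = \fp$. So in every case the hypothesis $\pi_0 C \neq 0$ of Theorem \ref{thm connective coalgebras are suspension spectra} holds, and if any of these spectra carried an $\ee$-coalgebra structure in $\sp$, then for all sufficiently large primes $p$ the $p$-local sphere $\Sp_{(p)}$ would be a retract of $C_{(p)}$ in the stable homotopy category.

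First I would record the elementary consequence of being a retract: applying $\pi_n(-)$ to a splitting $\Sp_{(p)} \to C_{(p)} \to \Sp_{(p)}$ shows that the $p$-local stable stem $\pi_n^s \otimes \z_{(p)} \cong \pi_n \Sp_{(p)}$ is a direct summand of $\pi_n C_{(p)}$ for every $n$; in particular $\pi_n C_{(p)} \neq 0$ whenever $\pi_n^s \otimes \z_{(p)} \neq 0$. The key input is then the classical fact that for every odd prime $p$ the first positive $p$-torsion class in the stable stems is the element $\alpha_1$ of order $p$ in degree $2p-3$, so that $\pi_{2p-3}^s \otimes \z_{(p)} \neq 0$, and crucially $2p-3$ is \emph{odd}.

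The contradiction then comes from a degree count. The homotopy of $ku$ and $MU$ is torsion-free and concentrated in even degrees, so $\pi_{2p-3}(ku)_{(p)} = \pi_{2p-3}(MU)_{(p)} = 0$; for odd $p$ the homotopy of $ko_{(p)}$ is concentrated in degrees divisible by $4$, so $\pi_{2p-3}(ko)_{(p)} = 0$; the connective covers of $E_n$ and $K(n)$ have homotopy concentrated in even degrees, so again $\pi_{2p-3} = 0$; and $\pi_i HA = 0$ for $i \neq 0$. Thus for all large $p$ the group $\pi_{2p-3} C_{(p)}$ vanishes while $\pi_{2p-3}^s \otimes \z_{(p)}$ does not, contradicting the retract property. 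Hence none of these spectra is an $\ee$-coalgebra in $\sp$.

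The main (and essentially only) obstacle is to exhibit a degree in which the $p$-local stable stems are nonzero but the spectrum vanishes $p$-locally; invoking $\alpha_1$ in the odd degree $2p-3$ handles all the listed spectra uniformly, since each of them has $p$-locally even (or, for $ko$, $4$-divisible) homotopy for large $p$. For the torsion cases --- the connective cover of $K(n)$ and $HA$ with $A$ torsion --- one could alternatively short-circuit the argument by noting directly that the component $\pi_0 \epsilon \colon \pi_0 C \to \z$ of the counit appearing in the proof of Theorem \ref{thm connective coalgebras are suspension spectra} is forced to be trivial, which is already incompatible with the nontriviality established there.
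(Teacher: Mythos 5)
Your proof is correct and follows the paper's intended route exactly: the paper deduces the corollary from Theorem \ref{thm connective  coalgebras are suspension spectra} ``by inspection on the homotopy groups,'' and your argument simply makes that inspection explicit by locating $\alpha_1\in\pi_{2p-3}^s$ in an odd degree where each listed spectrum has vanishing $p$-local homotopy. The only (cosmetic) caveat is that for $HA$ one should tacitly assume $A\neq 0$, as the paper also does.
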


\begin{ex}
Let $R$ be a commutative ring spectrum. Let $\mathcal{S}$ be the $\infty$-category of spaces. Then the functor
\[ 
R\wedge -: \mathcal{S}\longrightarrow \sp
\]
is strong symmetric monoidal. Here $\mathcal{S}$ is endowed with its Cartesian symmetric monoidal structure. In particular, for any space $X$, we get that $R\wedge X_+$ is a cocommutative $R$-coalgebra. 
For instance, for $R=\sph$, we have that $\Sigma^\infty_+ X$ is a cocommutative $\sph$-coalgebra in $\sp$.
\end{ex}

\begin{ex}\label{ex: hess-shipley}
From \cite[3.6]{HScothh}, we say a space is \emph{EMSS-good} if $X$ is connected and $\pi_1 X$ acts nilpotently on $H_i(\Omega X; \mathbb{Z})$ for all $i$. In particular, every simply connected space is EMSS-good. If $X$ is EMSS-good, then it was shown in \cite[3.7]{HScothh} that
\[
\cothh(\Sigma^\infty_+ X)\simeq \Sigma^\infty_+ \mathscr{L}X,
\]
where $\mathscr{L}X$ is the free loop space on $X$.
If $X$ is a finite CW-complex, then $\Sigma^\infty_+ X$ is dualizable in $\sp$, and by Theorem \ref{thm most general thh cothh duality}, we get:
\[
\cothh(\Sigma^\infty_+ X)\simeq \left(\thh((\Sigma^\infty_+ X)^\vee)\right)^\vee. 
\]
Thus if $X$ is a finite CW-complex and EMSS-good, we obtain:
\[
\cothh(\Sigma^\infty_+ X)\simeq \left(\thh((\Sigma^\infty_+ X)^\vee)\right)^\vee\simeq \Sigma^\infty_+ \mathscr{L}X.
\]
The last equivalence was proved for $X$ a finite CW-complex and simply connected in \cite{kuhn} and \cite{cary}. {Thus, our result generalizes the last equivalence above to EMSS-good finite CW-complexes.}
\end{ex}

\begin{ex}\label{ex: dual of smooth manifold}
Let $M$ be a compact smooth manifold. It is homotopic to a finite CW-complex and thus $\Sigma^\infty_+ M$ is dualizable in $\sp$. Then by Atiyah duality (see \cite[3.3]{atiyah}), the Spanier-Whitehead dual of the spectrum $\Sigma^\infty_+ M$ is the Thom spectrum $M^{-\tau}$ of its stable normal bundle. {The geometric construction of  $M^{-\tau}$ equips it with the structure of a commutative ring spectrum and, by \cite{ralph}, this structure agrees with the induced commutative ring structure on the Spanier-Whitehead dual of $\Sigma^\infty_+ M$ from Corollary \ref{cor: antiequivalence proper (co)algebras}.} Moreover, by Theorem \ref{thm most general thh cothh duality}, we obtain
\[
\cothh(\Sigma^\infty_+ M) \simeq \left(\thh(M^{-\tau})\right)^\vee.
\]
If we further assume that $M$ is EMSS-good, then we obtain by Example \ref{ex: hess-shipley}:
\[
\cothh(\Sigma^\infty_+ M)\simeq \left(\thh(M^{-\tau})\right)^\vee \simeq \Sigma^\infty_+ \mathscr{L}M.
\]
This recovers the cohomological version described in \cite[Comments 3, Section I.1.5]{stringtop}. 
\end{ex}

\begin{ex}\label{ex: compact Lie group}
Let $G$ be a compact Lie group. Then $\Sigma^\infty_+ G$ is a ring spectrum. Its Spanier-Whitehead is the Thom spectrum $G^{-\tau}$ of its stable normal bundle, just as in Example \ref{ex: dual of smooth manifold}. By Corollary \ref{cor: antiequivalence proper (co)algebras}, we obtain that $G^{-\tau}$ is an $\sph$-coalgebra, induced by the group structure on $G$. By Theorem \ref{thm most general thh cothh duality}, we obtain:
\[
\cothh(G^{-\tau})\simeq \left(\thh(\Sigma^\infty_+ G)\right)^\vee. 
\]
We also have that $\thh(\Sigma^\infty_+ G)\simeq \Sigma^\infty_+ \mathscr{L}BG$, and thus we obtain:
\[
\cothh(G^{-\tau})\simeq \left(\Sigma^\infty_+ \mathscr{L}BG\right)^\vee. 
\]
If $G$ is connected, then $BG$ is simply connected and thus by our discussion in Example \ref{ex: dual of smooth manifold}, we obtain:
\[
\cothh(G^{-\tau})\simeq \left(\cothh(\Sigma^\infty_+ BG)\right)^\vee.
\]
In \cite[5.7]{toolscothh}, the mod $p$-homology of $\cothh(\Sigma^\infty_+BG)$ was computed for $G$ equals one of the Lie groups $U(n)$, $SU(n)$, $Sp(n)$, $SO(2k)$, $G_2$, $F_4$, $E_6$, $E_7$ and $E_8$.
\end{ex}

\begin{rem} \label{rem e smash e is not a coalgebra}
Let $A\rightarrow B$ be a map of commutative $\ei$-algebras in a symmetric monoidal $\infty$-category $\C$. One might expect $B\otimes_A B$ to be an $\ee$-coalgebra in  $B$-modules in $\C$ due to the comultiplicaton map
\begin{multline*}
    B\otimes_A B \cong B\otimes_A A \otimes_A  B \to B\otimes_A B \otimes_A B \cong \\ B \otimes_A (B \otimes_B B) \otimes_A B  \cong (B\otimes_A B)\otimes_B (B\otimes_A B),
\end{multline*}
and the counit map 
\[ B\otimes_A B \to B \]
given by the multiplication on $B$. However, this does not work. This is due to the fact that the comultiplication map above is not a map of $B$-modules in general but of $B$-bimodules. The $B$-module structure on $ B\otimes_A B$ is given by the map 
\[B \cong B\otimes_A A \to B\otimes_A B,\]
but the $B$-module structure of the right hand side is given by the one induced by the tensor product in $B$-modules. We believe that there is no general way of turning this comultiplication map into a map of $B$-modules. 
\end{rem}
\section{Computations}\label{sec computations in ha modules}

In this section, we apply Theorem  \ref{thm equivalence of connective coalgebras and coconnective coalgebras in chains} to construct interesting examples of coalgebras in module spectra and then we use Theorem \ref{thm cothh duality in chains} to compute the topological coHochschild homology groups of these coalgebras. Furthermore, all our topological coHochschild homology computations are algebraic. In other words, in each case, we compute $\coTHH^{HR}(C)$ for some discrete ring $R$ and $HR$-coalgebra $C$. 

Each subsection below is devoted to the construction of a particular coalgebra and its coHochschild homology computation. In Section \ref{sec cothh of dsa in chains}, we show that $\hfp \wdg_{\hz}\hfp$ is an $\hfp$-coalgebra in a unique way and compute its coHochschild homology groups. Section \ref{sec cothh of steenrod algebra spectrum} is devoted to the study of the Steenrod algebra spectrum as an $\hfp$-coalgebra and in Section \ref{sec loop fp is a coalgebra}, we construct an $\hfp$-coalgebra structure on $\Omega \hfp$; we compute the relevant coHochschild homology groups in each of these cases.

\subsection{CoHochschild Homology of the Dual Steenrod Algebra in \texorpdfstring{$\hz$}{TEXT}-mo\-dules}\label{sec cothh of dsa in chains}
Here, we study the dual Steenrod algebra spectrum $\hfp \wdgz \hfp$ in $\hz$-modules  and its coHochschild homology. We claim in Remark \ref{rem e smash e is not a coalgebra} that such objects, i.e.\ objects of the form $B \otimes_A B$ for some map of $\ei$-ring spectra $A \to B$, do not carry a canonical $B$-coalgebra structure in general. 
In contrast, we show below that $\hfp \wdgz \hfp$ carries a unique $\hfp$-coalgebra structure. Subsequently, we compute the coHochschild homology of $\hfp \wdgz \hfp$ with this $\hfp$-coalgebra structure using Theorem \ref{thm cothh duality in chains}. 

\begin{thm} \label{thm dsa in chains has unique coalg structure}
There is a unique $\hfp$-coalgebra structure on $\hfp \wdgz \hfp$. Furthermore, this $\hfp$-coalgebra structure lifts to a cocommutative $\hfp$-coalgebra structure.
\end{thm}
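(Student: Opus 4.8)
The plan is to use the duality of Theorem \ref{thm equivalence of connective coalgebras and coconnective coalgebras in chains} to trade the classification of $\hfp$-coalgebra structures on $\hfp \wdgz \hfp$ for the more tractable classification of $\hfp$-algebra structures on its linear dual. First I would record the relevant homotopy. By the K\"unneth spectral sequence \cite[\RomanNumeralCaps{4}.4.1]{elmendorf2007rings}, whose $E^2$-page is $\Tor^{\z}_{*,*}(\fp,\fp)$ (computed from the resolution $0\to \z \xrightarrow{p}\z\to\fp\to 0$ and hence concentrated in filtrations $0$ and $1$), one finds $\pi_*(\hfp \wdgz \hfp)\cong \lambdafp(x_1)$ with $\lv x_1\rv=1$. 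In particular $M:=\hfp\wdgz\hfp$ is a connective finite type $\hfp$-module whose homotopy groups are finite free $\fp$-modules, so dually $N:=M^\vee$ is coconnective of finite type with $\pi_*N\cong\lambdafp(y)$, $\lv y\rv=-1$. Thus $M$ and $N$ lie in the full subcategories appearing in Theorem \ref{thm equivalence of connective coalgebras and coconnective coalgebras in chains} (applied with $B=\fp$).

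Next I would set up the reduction. The equivalence of Theorem \ref{thm equivalence of connective coalgebras and coconnective coalgebras in chains} is implemented by $(-)^\vee$ and is compatible with the forgetful functors to $\hfp$-modules, since linear duality is itself an equivalence on the relevant full subcategories of modules. Consequently the space of $\ee$-coalgebra structures on the underlying module $M$ (the fiber of the forgetful functor over $M$) is equivalent to the space of $\ee$-algebra structures on $N$, and similarly for $\ei$ in place of $\ee$, compatibly with the maps induced by $\com^\ot \to \assoc^\ot$. It therefore suffices to show that $N$ carries a unique $\ee$-$\hfp$-algebra structure and that this structure lifts to a (unique) commutative one; transporting these facts back through $(-)^\vee$ gives exactly the two assertions of the theorem.

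For the algebra classification I would argue via square-zero extensions. Since $N$ is coconnective with $\pi_0 N\cong \fp$ and $\pi_{-1}N\cong\fp$ and no other homotopy, its (two-stage) Postnikov filtration exhibits $N$ as an extension of $\tau_{\geq 0}N\simeq \hfp$ by the module $\Sigma^{-1}\hfp$. The candidate structure is the trivial square-zero extension $\hfp \oplus \Sigma^{-1}\hfp$, which is a commutative $\hfp$-algebra with the correct homotopy; this gives existence together with the cocommutative lift at once. For uniqueness I would invoke the classification of square-zero extensions \cite[7.4]{HA}: extensions of the $\hfp$-algebra $\hfp$ by $\Sigma^{-1}\hfp$ are classified by $\mathrm{Map}_{\Mod_{\hfp}}\!\big(L_{\hfp/\hfp},(\Sigma^{-1}\hfp)[1]\big)$, and the cotangent complex $L_{\hfp/\hfp}$ vanishes because $\hfp$ is the unit, i.e.\ the initial object among $\hfp$-algebras. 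Hence the only such extension is the trivial one, so every $\ee$- (resp.\ $\ei$-) $\hfp$-algebra with homotopy $\lambdafp(y)$ is equivalent, over the identity of its underlying module, to $\hfp \oplus \Sigma^{-1}\hfp$. This yields the unique $\ee$-algebra structure on $N$ and its commutative lift, and therefore, via the reduction, the unique $\ee$-coalgebra structure on $\hfp\wdgz\hfp$ together with its cocommutative lift.

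The hard part will be the uniqueness input in the last step: one must know that a \emph{coconnective} two-stage Postnikov tower is genuinely a square-zero extension so that \cite[7.4]{HA} applies, since the square-zero machinery is most cleanly developed for connective rings. I would handle this by working with the finite Postnikov tower directly and identifying its single attaching map with a derivation valued in $L_{\hfp/\hfp}$, whose vanishing forces the rigidity. A secondary point to verify is that the comparison in the reduction step is an equivalence of structure \emph{spaces}, not merely a bijection on equivalence classes; this follows from the naturality of the truncation maps, which guarantees that all the identifications produced above lie over the identity of the underlying $\hfp$-module.
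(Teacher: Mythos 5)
Your reduction is exactly the paper's: both arguments use Theorem \ref{thm equivalence of connective coalgebras and coconnective coalgebras in chains} to trade $\hfp$-coalgebra structures on the connective module $\hfp\wdgz\hfp$ for $\hfp$-algebra structures on its coconnective dual $N$ with $\pi_*N\cong\lambdafp(z_{-1})$, $\lv z_{-1}\rv=-1$, and both use that this graded ring structure is unique. The divergence, and the gap, is in how you establish that $N$ carries a unique $\ee$-algebra structure. You propose to realize $N$ as a square-zero extension of $\tau_{\geq 0}N\simeq\hfp$ by $\Sigma^{-1}\hfp$ and then quote the classification by $\mathrm{Map}(L_{\hfp/\hfp},\Sigma^{-1}\hfp[1])\simeq\ast$. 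The classification statement is fine, but the input to it is not available: a square-zero extension of $\hfp$ by $\Sigma^{-1}\hfp$ is in particular an algebra map $N\to\hfp$ with fiber $\Sigma^{-1}\hfp$, i.e.\ an augmentation, whereas the only map the Postnikov filtration of the coconnective $N$ hands you is the connective cover $\hfp\simeq\tau_{\geq 0}N\to N$, which points the wrong way. The theorem of \cite[7.4]{HA} asserting that truncation maps are square-zero extensions concerns $\tau_{\leq n}A\to\tau_{\leq n-1}A$ for \emph{connective} $A$; the functors $\tau_{\geq -n}$ for $n>0$ are not lax monoidal, so there is no multiplicative tower on the coconnective side and no ``single attaching map'' to identify with a derivation. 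Producing an augmentation $N\to\hfp$ and verifying the square-zero property for an \emph{arbitrary} $\ee$-algebra with this homotopy is essentially the rigidity you are trying to prove, so the step you flag as ``the hard part'' is not a technical verification but the actual content, and your proposed fix does not supply it.

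The paper closes this step differently: it shows (Proposition \ref{prop dga is formal}) that every $\fp$-DGA with homology $\lambdafp(z_{-1})$ is formal, using the Hopkins--Miller obstruction theory of \cite{rezk1998notes} as formulated in \cite{johnson2014lifting}, whose obstruction groups $\Ext^{s+2}_{\lambdafp(z_{-1})\otimes\lambdafp(z_{-1})\op}(\lambdafp(z_{-1}),\Omega^{s}\lambdafp(z_{-1}))$ for $s\geq 1$ vanish by an explicit change of variables and a degree count on the periodic free resolution of $\fp$ over $\lambdafp(z_{-1})$. That computation is insensitive to (co)connectivity and is what you would need to substitute for the cotangent-complex step. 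Two smaller remarks: your argument, if repaired, would prove uniqueness of the $\ei$-structure, which is stronger than what is stated --- the paper only claims that a cocommutative lift exists, obtained because the formal model is commutative; and the space-level refinement of the reduction is unnecessary, since a bijection on equivalence classes of structures already suffices for the statement.
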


 We postpone the proof of this theorem to the end of the subsection and start the computation of the coHochschild homology groups of $\hfp \wdgz \hfp$. Note the theorem above allows us to consider $\hfp \wdgz \hfp$ as an $\hfp$-coalgebra.

\begin{thm}\label{thm cothh of dsa in chains}
The coHochschild homology groups of $\hfp \wdg_{\hz}\hfp$ are given by:
\[   \cothh^{\hfp}_i(\hfp \wdgz \hfp)\cong\left\{
\begin{array}{ll}
      \prod_{n\in\mathbb{N}}\fp & i=0 \\
      \prod_{n\in\mathbb{N}}\fp& i=1 \\
      0& \textup{otherwise.} \\
\end{array} 
\right. \]
In other words, there is an equivalence of graded $\fp$-modules:
\begin{equation} \label{eq cothh of dsa in chains}
  \cothh^{\hfp}_*(\hfp \wdgz \hfp)\cong \lambdafp(x_1) \otimes \fp[[t]]
\end{equation} 
where $\lv t \rv = 0$ and $\lv x_1 \rv = 1$. 
\end{thm}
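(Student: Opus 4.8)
The plan is to deduce this from the THH--coTHH duality of Theorem~\ref{thm cothh duality in chains}, so I first check that $C := \hfp\wdgz\hfp$, regarded as the $\hfp$-coalgebra furnished by Theorem~\ref{thm dsa in chains has unique coalg structure}, meets its hypotheses. Working over the base field $\fp$ (Noetherian of global dimension $0$), I would compute $\pi_*C$: smashing the cofiber sequence $\hz\xrightarrow{\,p\,}\hz\to\hfp$ with $\hfp$ over $\hz$, and using that $p$ acts as $0$, gives a splitting $C\simeq\hfp\vee\Sigma\hfp$ of $\hfp$-modules, so $\pi_*C\cong\Tor^{\z}_*(\fp,\fp)\cong\lambdafp(x_1)$ with $\lv x_1\rv=1$. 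In particular $C$ is connective of finite type (indeed dualizable, by Proposition~\ref{prop dualizable ha modules}), so Theorem~\ref{thm cothh duality in chains} yields $\cothh^{\hfp}(C)\simeq\bigl(\thh^{\hfp}(C^\vee)\bigr)^\vee$, reducing the problem to a THH computation followed by a linear dualization.

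Next I would identify the $\hfp$-algebra $C^\vee$ and compute its topological Hochschild homology. Since $\pi_*C$ is a finite free graded $\fp$-module, $\pi_*(C^\vee)\cong(\pi_*C)^\vee\cong\lambdafp(x_{-1})$ with $\lv x_{-1}\rv=-1$. Under the symmetric monoidal equivalence $\Mod_{\hfp}\simeq\mathcal{D}(\fp)$, the algebra $C^\vee$ corresponds to an $\fp$-DGA whose homology is the exterior algebra on a single odd generator, and such a DGA is formal since all potential Massey products land in vanishing degrees. Because $\thh^{\hfp}$ of an $\hfp$-algebra is the relative Hochschild homology over $\fp$ (no B\"okstedt un-dividing occurs relative to the field), I get $\thh^{\hfp}_*(C^\vee)\cong\HH^{\fp}_*\bigl(\lambdafp(x_{-1})\bigr)\cong\lambdafp(x_{-1})\otimes\Gamma_{\fp}[\sigma x_{-1}]$, where $\sigma x_{-1}$ has Hochschild degree $1$ and internal degree $-1$, hence total degree $0$. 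Equivalently one may run the B\"okstedt spectral sequence with $E^2=\HH^{\fp}_*(\pi_*C^\vee)$ and note that it collapses, its classes sitting only in total degrees $0$ and $-1$ with the generators $\gamma_n(\sigma x_{-1})$ forced to be permanent cycles. Thus $\thh^{\hfp}_*(C^\vee)$ is $\bigoplus_{n\in\mathbb{N}}\fp$ in each of degrees $0$ and $-1$, and $0$ elsewhere.

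Finally I would dualize. The linear dual carries $\lambdafp(x_{-1})$ to $\lambdafp(x_1)$ and, crucially, carries the degree-$0$ divided power algebra $\Gamma_{\fp}[\sigma x_{-1}]$ to the power series ring $\fp[[t]]$ with $\lv t\rv=0$: every divided power $\gamma_n(\sigma x_{-1})$ lives in degree $0$, so that degree is countably infinite-dimensional and its $\fp$-linear dual is the \emph{product} $\prod_{n\in\mathbb{N}}\fp\cong\fp[[t]]$ (the standard fact that the graded dual of a divided power Hopf algebra is the corresponding power series algebra). Reassembling, $\cothh^{\hfp}_*(\hfp\wdgz\hfp)\cong\lambdafp(x_1)\otimes\fp[[t]]$, which is $\prod_{n\in\mathbb{N}}\fp$ in degrees $0$ and $1$ and $0$ otherwise, as claimed; this pins down the $E_\infty$-page left open in \cite{sarah}. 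I expect the main obstacle to be the care required in this final step: the relevant degrees are not finite-dimensional, so one must verify that dualization produces the infinite product $\prod_{n\in\mathbb{N}}\fp$ rather than a direct sum, which is exactly the phenomenon that motivates the finiteness and (co)connectivity hypotheses of Theorem~\ref{thm cothh duality in chains} and separates $\cothh$ from the naive dual of the coB\"okstedt $E_2$-page.
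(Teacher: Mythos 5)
Your proof follows the same overall route as the paper: reduce via Theorem \ref{thm cothh duality in chains} to computing $\thh^{\hfp}$ of the dual algebra $X=(\hfp\wdgz\hfp)^\vee$, establish that $X$ is formal, compute the classical Hochschild homology of $\lambdafp(z_{-1})$ to get $\lambdafp(z_{-1})\otimes\Gamma_{\fp}(\sigma z_{-1})$ concentrated in total degrees $0$ and $-1$, and dualize degreewise, with the countably infinite-dimensional degree-$0$ piece dualizing to the product $\prod_{n\in\mathbb{N}}\fp\cong\fp[[t]]$. The one place you genuinely diverge is the formality step, which is the technical heart of the paper's argument: the paper proves Proposition \ref{prop dga is formal} (formality as an $\fp$-DGA, not merely as a DGA) via Hopkins--Miller obstruction theory, showing that the groups $\Ext^{s+2}_{\lambdafp(z_{-1})\otimes\lambdafp(z_{-1})^{op}}(\lambdafp(z_{-1}),\Omega^{s}\lambdafp(z_{-1}))$ vanish for $s\geq 1$. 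Your justification, ``all potential Massey products land in vanishing degrees,'' is not as stated a proof: vanishing of Massey products does not imply formality in general. What does work is the sharper degree argument on a strictly unital minimal $A_\infty$-model over $\fp$: for $n\geq 3$ the only potentially nonzero value of $m_n$ is $m_n(z_{-1},\dots,z_{-1})$, which lands in degree $-n+(n-2)=-2$ where the homology vanishes, so the minimal model is formal as an $\fp$-$A_\infty$-algebra and hence as an $\hfp$-algebra. Phrased that way --- and taking care that the formality is established over $\fp$ rather than over $\z$, the distinction the paper's remark after Proposition \ref{prop dga is formal} emphasizes --- your argument is a legitimate and arguably more elementary substitute for the obstruction-theoretic proof. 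Everything else (the identification $\pi_*(\hfp\wdgz\hfp)\cong\lambdafp(x_1)$, the collapse of the Hochschild spectral sequence for a formal algebra, and the product-versus-sum bookkeeping in the final dualization) matches the paper.
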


By a Tor calculation, one obtains that:
\[\pi_*(\hfp \wdgz \hfp) \cong \lambdafp(y_1)\]
where $\lv y_1 \rv = 1$, see \cite[Theorem 4.1 in \RomanNumeralCaps 4.4]{elmendorf2007rings}. In particular, $\hfp \wdgz \hfp$ satisfies the hypothesis of Theorem \ref{thm cothh duality in chains}. Indeed, $\hfp \wdgz \hfp$ is a dualizable $\hfp$-module due to Example \ref{ex dualizable in hk modules}. We obtain the following equivalence of $\hfp$-modules:
\begin{equation*}
\begin{split}
    \cothh^{\hfp}(\hfp \wdgz \hfp) \simeq \Big(\thh^{\hfp}\left((\hfp \wdgz \hfp)^{\vee}\right)\Big)^\vee. 
\end{split}
\end{equation*}
Therefore our question reduces to the calculation of the $\fp$-Hochschild homology groups of $(\hfp \wdgz \hfp)^{\vee}$. Recall that dualization in $\hfp$-modules results in dualization of graded $\fp$-modules at the level of homotopy groups which carries coproducts to products. Therefore Proposition \ref{prop yet another hh computation} below together with the equivalence above provide Theorem \ref{thm cothh of dsa in chains}. Let $X$ denote the $\hfp$-algebra $(\hfp \wdgz \hfp)^{\vee}$ for the rest of this subsection.

\begin{prop}\label{prop yet another hh computation}

There is an isomorphism of graded $\fp$-modules:
\[
\thh^{\hfp}_*((\hfp \wdgz \hfp)^{\vee}) \cong \fp[y] \otimes \lambdafp({z_{-1}})
\]
where $\lv y \rv=0$ and $\lv z_{-1} \rv = -1$.
\end{prop}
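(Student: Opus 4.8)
The plan is to compute $\thh^{\hfp}_*(X)$, where $X=(\hfp \wdgz \hfp)^\vee$, by first identifying its homotopy ring and then reducing topological Hochschild homology over the field $\hfp$ to an algebraic Hochschild homology computation. First I would record that $\pi_*(\hfp\wdgz\hfp)\cong \lambdafp(y_1)$ with $\lv y_1\rv=1$, so that $\hfp\wdgz\hfp$ is dualizable in $\hfp$-modules by Example \ref{ex dualizable in hk modules}. Consequently dualization acts as the graded $\fp$-linear dual on homotopy groups, and since the graded dual of an exterior algebra on a single odd class is again exterior on the dual class, we obtain an isomorphism of graded $\fp$-algebras
\[
\pi_* X \cong \lambdafp(z_{-1}), \qquad \lv z_{-1}\rv = -1.
\]

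Next I would set up the B\"okstedt spectral sequence computing $\thh^{\hfp}_*(X)$. Because we work over the field $\fp$, the K\"unneth theorem \cite[\RomanNumeralCaps{4}.4.1]{elmendorf2007rings} identifies $\pi_*(X^{\wdg_{\hfp}(n+1)})$ with $(\pi_* X)^{\otimes_{\fp}(n+1)}$, so the simplicial graded $\fp$-vector space obtained by applying $\pi_*$ levelwise to the cyclic bar construction of $X$ is precisely the Hochschild complex of the graded algebra $\pi_* X$. This yields a spectral sequence
\[
E^2_{*,*} = \HH^{\fp}_*(\lambdafp(z_{-1})) \Longrightarrow \thh^{\hfp}_*(X).
\]
The $E^2$-page is the Hochschild homology of an exterior algebra on one odd generator, which is the standard computation
\[
\HH^{\fp}_*(\lambdafp(z_{-1})) \cong \lambdafp(z_{-1}) \otimes \Gamma_{\fp}[\sigma z_{-1}], \qquad \lv \sigma z_{-1}\rv = 0,
\]
where $\Gamma_{\fp}$ denotes the divided power algebra and $\sigma z_{-1}$ is the suspension class sitting in Hochschild filtration $1$.

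I would then argue that the spectral sequence collapses at $E^2$ with no differentials and no additive extensions affecting the graded $\fp$-module structure. The class $z_{-1}$ sits in filtration $0$ and $\sigma z_{-1}$ in filtration $1$, so both are permanent cycles for filtration reasons; invoking compatibility of the differentials with the divided power structure, $d^r(\sigma z_{-1})=0$ forces $d^r\gamma_n(\sigma z_{-1})=0$, so every generator of $E^2$ is a permanent cycle and $E^2=E^\infty$. Since the abutment is concentrated in internal degrees $0$ and $-1$ and $\fp$-modules are free, the filtration splits and the abutment is isomorphic as a graded $\fp$-module to $E^\infty$. Finally, $\Gamma_{\fp}[\sigma z_{-1}]$ is concentrated in degree $0$ and countably infinite dimensional there, hence isomorphic as a graded $\fp$-module to $\fp[y]$ with $\lv y\rv = 0$; together with the exterior factor this gives
\[
\thh^{\hfp}_*(X) \cong \fp[y]\otimes \lambdafp(z_{-1}).
\]

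The main obstacle is the collapse step: the bidegrees a priori permit a differential from $\gamma_{p^k}(\sigma z_{-1})$ onto $z_{-1}\,\gamma_{p^k-r}(\sigma z_{-1})$, and because degree $0$ is infinite dimensional one must also take care with convergence in this unbounded, coconnective setting. The cleanest way around both issues is to establish that $X$ is \emph{formal}, i.e.\ equivalent as an $\hfp$-algebra to $\lambdafp(z_{-1})$: indeed $\hfp\wdgz\hfp$ is the Koszul complex of the regular element $p\in\z$ and is therefore formal, and formality passes to the dual $X$. Formality makes $\thh^{\hfp}_*(X)$ agree on the nose with the algebraic Hochschild homology of $\pi_* X$, eliminating both the differential ambiguity and the convergence subtleties and yielding the stated module isomorphism directly.
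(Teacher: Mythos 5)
Your overall strategy matches the paper's: identify $\pi_*X\cong\lambdafp(z_{-1})$, recognize the $E^2$-page as $\HH^{\fp}_*(\lambdafp(z_{-1}))\cong\lambdafp(z_{-1})\otimes\Gamma_{\fp}(\sigma z_{-1})$, and force the collapse by proving that $X$ is formal. You are also right to distrust your first collapse argument: in characteristic $p$ the classes $\gamma_{p^k}(\sigma z_{-1})$ are indecomposable, so $d^r(\sigma z_{-1})=0$ does not propagate to them by multiplicativity, and the bidegrees do permit $d^r(\gamma_n(\sigma z_{-1}))=z_{-1}\gamma_{n-r}(\sigma z_{-1})$. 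So everything hinges on the formality of $X$.

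That is where the gap lies. Your justification --- ``$\hfp\wdgz\hfp$ is (modelled by) the Koszul complex of $p$, hence formal, and formality passes to the dual'' --- conflates the algebra and coalgebra structures. Linear duality on dualizable $\hfp$-modules is a \emph{contravariant} monoidal equivalence, so it exchanges algebras and coalgebras: formality of the \emph{algebra} $\hfp\wdgz\hfp$ only tells you that $X$ carries a formal \emph{coalgebra} structure. The $\ee$-algebra structure on $X=(\hfp\wdgz\hfp)^{\vee}$ that enters $\thh^{\hfp}(X)$ is dual to the \emph{coalgebra} structure on $\hfp\wdgz\hfp$, a separate piece of data that the Koszul model does not control; identifying that coalgebra structure is precisely the content of Theorem \ref{thm dsa in chains has unique coalg structure}, whose proof in the paper itself depends on the formality statement you are trying to establish, so this route is circular as written. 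The paper instead proves directly (Proposition \ref{prop dga is formal}) that \emph{every} $\fp$-DGA with homology $\lambdafp(z_{-1})$ is formal, using Hopkins--Miller obstruction theory: the obstructions live in $\Ext^{s+2}_{\lambdafp(z_{-1})\otimes\lambdafp(z_{-1})\op}(\lambdafp(z_{-1}),\Omega^{s}\lambdafp(z_{-1}))$, which vanish because $\fp$ admits a free resolution over $\lambdafp(z_{-1})$ with a single generator in each homological degree, placed so that the relevant Hom-groups are zero. Some argument of this kind (or an explicit zigzag from the endomorphism DGA of the Koszul complex to its homology) is required; it cannot be replaced by formality of $\hfp\wdgz\hfp$ alone. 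The remainder of your proof --- the K\"unneth identification of the $E^2$-page, the additive splitting, and the identification of $\Gamma_{\fp}(\sigma z_{-1})$ with $\fp[y]$ as graded $\fp$-modules --- is correct and agrees with the paper.
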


\begin{proof}

By the Ext spectral sequence calculating the homotopy groups of  mapping spectra \cite[\RomanNumeralCaps{4}.4.4.1]{elmendorf2007rings}, we have an equivalence of $\fp$-modules
\[\pi_*X \cong \lambdafp(z_{-1})\]
where $\lv z_{-1} \rv = -1$.
This is also an isomorphism of rings since there is a unique ring structure on the right hand side.

We show below in Proposition \ref{prop dga is formal} that the $\fp$-DGA $X$ is formal, i.e.\ it is quasi-isomorphic to an $\fp$-DGA with trivial differentials. Note that in the spectral sequence corresponding to a double complex with trivial vertical differentials, all the differentials are trivial on the $E^2$ page and after. This shows that the differentials in the standard spectral sequence calculating the Hochschild homology groups of $X$ are trivial after the first page.   

For this spectral sequence, we have:
\begin{equation*}
\begin{split}
    E^2_{s,t} = \HH^{\fp}_{s,t}(\lambdafp({z_{-1}}),\lambdafp({z_{-1}})) =& \Tor^{\lambdafp({z_{-1}}) \otimes \lambdafp({z_{-1}})^{op}}_{s,t}(\lambdafp({z_{-1}}),\lambdafp({z_{-1}}))\\
    &\Longrightarrow \THH^{\hfp}_{s+t}(X).
    \end{split}
    \end{equation*}
To calculate the $E^2$ page of this spectral sequence, we use the automorphism of $\lambdafp({z_{-1}}) \otimes \lambdafp({z_{-1}})$
given by:
\[z_{-1} \otimes 1 \to z_{-1} \otimes 1 \textup{\ and \ } 1 \otimes z_{-1} \to z_{-1} \otimes 1 - 1 \otimes z_{-1}.\]
Precomposing with this automorphism, the action of the second factor of $\lambdafp({z_{-1}}) \otimes \lambdafp({z_{-1}})$ on $\lambdafp({z_{-1}})$ becomes trivial, i.e.\ this action is the one obtained by the augmentation map of $\lambdafp({z_{-1}})$.  Using the K\"unneth formula \cite[Equation (2)]{bayindir2019dgaswithpolynomial}, we obtain the following:
\begin{equation*}
    \begin{split}
        E^2 \cong &\Tor^{\lambdafp({z_{-1}}) \otimes \lambdafp({z_{-1}})^{op}}(\lambdafp({z_{-1}}),\lambdafp({z_{-1}}))\\
        \cong& \Tor^{\lambdafp({z_{-1}}) }(\lambdafp({z_{-1}}),\lambdafp({z_{-1}})) \otimes\Tor^{\lambdafp({z_{-1}}) }(\fp,\fp)   \\
        \cong & \lambdafp(z_{-1}) \otimes \Gamma_{\fp}(\sigma z_{-1}).
    \end{split}
\end{equation*}
Here: $\text{deg}(\sigma z_{-1})= (1,-1)$ and  $\Gamma_{\fp}(\sigma z_{-1})$ denotes the divided power algebra on a single generator. There is an equivalence of $\fp$-modules $\Gamma_{\fp}(\sigma z_{-1}) \cong \fp[\sigma z_{-1}]$.

As mentioned before, all the differentials in this spectral sequence are trivial on the $E^2$ page and after. Therefore, there is an isomorphism of graded $\fp$-modules \begin{equation*}
\THH^{\hfp}_*(X) \cong \fp[y] \otimes \lambdafp({z_{-1}})
\end{equation*}
 where $\lv y \rv=0$. 
\end{proof}

In order to finish the proof of Theorem \ref{thm cothh of dsa in chains}, we need to prove the following proposition. Recall that an $\fp$-DGA is said to be formal if it is quasi-isomorphic to an $\fp$-DGA with trivial differentials.

\begin{prop} \label{prop dga is formal}
Every $\fp$-DGA $Z$ with homology
\[H_*Z = \lambdafp(z_{-1})\]
 is formal as an $\fp$-DGA. In particular, $X=(\hfp \wdgz \hfp)^{\vee} $ is a formal $\fp$-DGA.
\end{prop}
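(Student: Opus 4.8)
The plan is to establish formality by transporting the $A_\infty$-structure of $Z$ onto its homology and observing that all higher operations vanish for degree reasons. Since we work over the field $\fp$, the underlying chain complex of $Z$ admits a deformation retract onto its homology $H_*Z \cong \lambdafp(z_{-1})$, which is concentrated in the two degrees $0$ and $-1$. The homotopy transfer theorem then equips $H_*Z$ with the structure of an $A_\infty$-algebra $(H_*Z, \{m_n\}_{n\ge 1})$ together with an $A_\infty$-quasi-isomorphism $H_*Z \to Z$; here $m_1 = 0$ and $m_2$ is the product induced on homology, namely the exterior multiplication of $\lambdafp(z_{-1})$. We may moreover take the transferred structure to be strictly unital.

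The key step is to show $m_n = 0$ for all $n \ge 3$. Write $z = z_{-1}$, so that $\lv z \rv = -1$. By strict unitality, for $n \ge 3$ the operation $m_n$ vanishes as soon as one of its inputs is the unit $1 \in H_0 Z$, so the only value to control is $m_n(z, \dots, z)$. Normalising so that $m_1$ has degree $-1$ and $m_2$ degree $0$, the operation $m_n$ has degree $n-2$ in homological grading, and hence $m_n(z^{\otimes n})$ lies in degree $n\cdot(-1) + (n-2) = -2$. Since $H_{-2}Z = 0$, this element must vanish. Therefore $m_n = 0$ for every $n \ge 3$, and the transferred $A_\infty$-structure is nothing but the honest $\fp$-DGA $(\lambdafp(z_{-1}), 0)$ with zero differential.

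It then remains to convert the $A_\infty$-quasi-isomorphism $(\lambdafp(z_{-1}), 0) \to Z$, a map between two genuine DGAs, into a zig-zag of $\fp$-DGA quasi-isomorphisms; this is standard, since an $A_\infty$-quasi-isomorphism between DGAs induces an isomorphism in the homotopy category of DGAs. This proves that $Z$ is formal. The final assertion follows by applying the statement to $X = (\hfp \wdgz \hfp)^\vee$: by the Ext spectral sequence recalled in the proof of Proposition \ref{prop yet another hh computation}, we have $\pi_* X \cong \lambdafp(z_{-1})$ as graded rings, so any $\fp$-DGA model of $X$ meets the hypothesis and is therefore formal. The main obstacle is precisely the vanishing of the higher operations, and the advantage of the transfer approach is that it sidesteps a naive direct strictification: choosing a cycle $\zeta$ representing $z_{-1}$ and hoping for $\zeta^2 = 0$ is not immediate, since $\zeta^2$ is only a boundary, and it is exactly the potential $A_\infty$-obstructions organising this phenomenon that the degree count above forces into the trivial group $H_{-2}Z$.
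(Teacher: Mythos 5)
Your proof is correct, but it takes a genuinely different route from the paper. The paper works at the level of $\hfp$-module spectra and invokes the Hopkins--Miller obstruction theory (in the form of Johnson--Noel): the obstructions to rigidifying the isomorphism of homotopy-category monoids $Z\to T$ live in Hochschild-type groups $\Ext^{s+2}_{\lambdafp(z_{-1})\ot\lambdafp(z_{-1})\op}(\lambdafp(z_{-1}),\Omega^s\lambdafp(z_{-1}))$, which are shown to vanish via a change of variables on $\lambdafp(z_{-1})\ot\lambdafp(z_{-1})$, flat base change, and an explicit periodic free resolution of $\fp$ over $\lambdafp(z_{-1})$. You instead stay entirely inside $\fp$-chain complexes and use Kadeishvili's homotopy transfer: the minimal strictly unital $A_\infty$-structure on $H_*Z$ has $m_n$ of degree $n-2$, so $m_n(z^{\ot n})$ lands in degree $-2$, which is empty; hence the transferred structure is the formal DGA. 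The two arguments exploit the same underlying sparsity (homology concentrated in degrees $0$ and $-1$, so that degree $-2$ is the first place an obstruction could live), but package it differently. Your version is more elementary and self-contained modulo two standard inputs that you should cite or at least flag: that the transferred minimal structure can be taken strictly unital (this is genuinely needed, since without it operations such as $m_3(z,z,1)$ land in degree $-1$ and are not killed by the degree count), and that an $A_\infty$-quasi-isomorphism between DGAs can be converted into a zig-zag of strict $\fp$-DGA quasi-isomorphisms (Lef\`evre-Hasegawa). The paper's version buys compatibility with the surrounding spectrum-level framework at the cost of heavier machinery; yours makes the degree-theoretic mechanism behind the vanishing completely transparent.
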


\begin{rem}
In \cite[proof of Theorem 1.6]{bayindir2019dgaswithpolynomial} the first author shows that every $\fp$-DGA with homology $\lambdafp(z_{-1})$ is formal as a DGA. However, we need a slightly stronger result. We need to show that such $\fp$-DGAs are formal as $\fp$-DGAs.
\end{rem}

We work on this problem in $\hfp$-algebras and use the obstruction theory of  Hopkins and Miller \cite{rezk1998notes}. This obstruction theory provides obstructions to lifting a map of monoids in the homotopy category of $\hfp$-modules to a map of $\hfp$-algebras  \cite[4.5]{johnson2014lifting}.
\begin{proof}[Proof of Proposition \ref{prop dga is formal}]

Let $Z$ also denote an $\hfp$-algebra corresponding to the $Z$ in  Proposition \ref{prop dga is formal} and let $T$ denote the $\hfp$-algebra corresponding to the formal DGA with homology $\lambdafp(z_{-1})$. 

Since the homotopy category of $\hfp$-modules is the category of graded $\fp$-modules as a monoidal category via the functor $\pi_*-$, there is an isomorphism of monoids from $Z$ to $T$ in the homotopy category of $\hfp$-modules. By Theorem 4.5 of \cite{johnson2014lifting}, the obstructions to lifting this map to a map of $\hfp$-algebras lie in the following Andr{\'e}--Quillen cohomology groups for graded associative $\fp$-algebras \cite[Section 5.2.1]{johnson2014lifting}:
\[\Der^{s+1}(\lambdafp(z_{-1}), \Omega^{s}\lambdafp(z_{-1})) \textup{\ for \ } s\geq 1.\]
Here, $\Omega^n$ shifts down a graded module $n$ times. By the identification of  Andr{\'e}--Quillen cohomology groups with Hochschild cohomology groups, one obtains that the obstructions lie in the following groups \cite[Section 5.2.1]{johnson2014lifting}: 
\begin{equation} \label{eq obstruction groups}
\Ext_{\lambdafp(z_{-1}) \otimes \lambdafp(z_{-1})^{op}}^{s+2}(\lambdafp(z_{-1}), \Omega^{s}\lambdafp(z_{-1})) \textup{\ for \ } s\geq 1.
\end{equation}
By the discussion above, it is sufficient to show that the Ext groups in \eqref{eq obstruction groups} are trivial.
Note that all the rings in sight are graded commutative and therefore we actually have $\lambdafp(z_{-1})\op = \lambdafp(z_{-1})$. There is an automorphism of
\[\lambdafp(z_{-1}) \otimes \lambdafp(z_{-1}),\]
given by: 
\[z_{-1} \otimes 1 \to z_{-1} \otimes 1 \textup{\ and \ } 1 \otimes z_{-1} \to z_{-1} \otimes 1 - 1 \otimes z_{-1}.\]
We consider the obstruction groups in \eqref{eq obstruction groups} after precomposing with this automorphism. This ensures that the action of the first factor in $\lambdafp(z_{-1}) \otimes \lambdafp(z_{-1})$ on $\lambdafp(z_{-1})$ and $\Omega^{s}\lambdafp(z_{-1})$ are the canonical non-trivial actions and the action of the second factor on $\lambdafp(z_{-1})$ and $\Omega^{s}\lambdafp(z_{-1})$ are the trivial actions given by the augmentation map of $\lambdafp(z_{-1})$.

Let
\[\varphi \co \lambdafp(z_{-1}) \longrightarrow \lambdafp(z_{-1}) \otimes \lambdafp(z_{-1})\]
denote the map of rings given by the inclusion of the second factor. By the derived extension of scalars functor induced by $\varphi$, we obtain the first isomorphism below. The second isomorphism is obtained using the fact that $\lambdafp(z_{-1}) \otimes \lambdafp(z_{-1})$ is a flat $\lambdafp(z_{-1})$-module through the action induced by $\varphi$:
\begin{equation*}
\begin{split}
&\Ext_{ \lambdafp(z_{-1})}^{s+2}(\fp, \Omega^{s}(\fp \oplus \Omega \fp))\\
    &\cong \Ext_{\lambdafp(z_{-1}) \otimes \lambdafp(z_{-1})}^{s+2}((\lambdafp(z_{-1}) \otimes \lambdafp(z_{-1}) ) \otimes_{\lambdafp(z_{-1})}^{\mathbb{L}} \fp, \Omega^{s}\lambdafp(z_{-1})) \\
    &\cong \Ext_{\lambdafp(z_{-1}) \otimes \lambdafp(z_{-1})}^{s+2}(\lambdafp(z_{-1}), \Omega^{s}\lambdafp(z_{-1})). \\
    \end{split}
\end{equation*}
Therefore it is sufficient to show that: 
\[\Ext_{ \lambdafp(z_{-1})}^{s+2}(\fp, \Omega^{s}(\fp \oplus \Omega \fp))= 0 \textup{\ for  } s \geq 1.\]
We have the following free resolution of $\fp$ in $\lambdafp(z_{-1})$-modules:
\[ \cdots \to \Omega^2 \lambdafp(z_{-1}) \to \Omega \lambdafp(z_{-1}) \to \lambdafp(z_{-1})\to \fp.\]
Applying $\Hom_{\lambdafp(z_{-1})}(-, \Omega^{s}(\fp \oplus \Omega \fp))$ to this resolution gives a resolution which is given by the following in resolution degree $s+2$:
\[\Hom_{\lambdafp(z_{-1})}(\Omega^{s+2}\lambdafp(z_{-1}), \Omega^{s}(\fp \oplus \Omega \fp)) = 0.\]
Therefore the groups in \eqref{eq obstruction groups} containing the obstructions to the formality of $Z$ are trivial.
\end{proof}
\begin{rem}\label{rem: counter-example for dual of cothh}
Our computation above shows that:
\[
\Big(\cothh^{\hfp}(C)\Big)^\vee \not\simeq \thh^{\hfp}(C^\vee).
\]
Indeed we have that $\cothh_0^{\hfp}(\hfp \wdgz \hfp)$ is an uncountably infinite dimensional $\fp$-vector space, see \eqref{eq cothh of dsa in chains}. We also know that $\thh_0^{\hfp}((\hfp \wdgz \hfp)^\vee)$ is a countably infinite dimensional $\fp$-vector space, see Proposition 
\ref{prop yet another hh computation}. Note that dualization in $\hfp$-modules  results in $\fp$-dualization at the level of homotopy groups \cite[\RomanNumeralCaps{4}.4.1]{elmendorf2007rings}. In particular, the dual of  $\cothh_0^{\hfp}(\hfp \wdgz \hfp)$ is again an uncountable dimensional vector space and therefore it is not isomorphic to $\thh_0^{\hfp}((\hfp \wdgz \hfp)^\vee)$. 
\end{rem}

\begin{proof}[Proof of Theorem \ref{thm dsa in chains has unique coalg structure}]

To prove this theorem, we apply  Theorem \ref{thm equivalence of connective coalgebras and coconnective coalgebras in chains} for the case $A= \fp$ and $\O= \assoc$. 
Recall that $\hfp$-modules are uniquely determined by their homotopy groups; this follows by the Ext spectral sequence calculating mapping spectra \cite[\RomanNumeralCaps{4}.4.4.1]{elmendorf2007rings}. Therefore, our goal is to show that there is a unique  $\hfp$-coalgebra with homotopy groups given by:
\[\lambdafp(y_1) \cong \pi_*(\hfp \wdgz \hfp).\]
It follows by the Ext spectral sequence that the dualization functor in $\hfp$-modules results in graded $\fp$-module dualization at the level of homotopy groups. In particular, we have: 
\[\pi_*((\hfp \wdgz \hfp)^\vee) \cong \lambdafp(z_{-1})\]
where $\lvert z_{-1}\rvert = -1$. Similarly, the dual of an $\hfp$-algebra with homotopy $\lambdafp(z_{-1})$ has homotopy groups given by $\lambdafp(y_{1})$. Using this together with Theorem \ref{thm equivalence of connective coalgebras and coconnective coalgebras in chains}, we obtain that the set of  equivalence classes of $\hfp$-coalgebras with homotopy groups $\lambdafp(y_{1})$ is in bijective correspondence with the set of equivalence classes of $\hfp$-algebras with homotopy groups $\lambdafp(z_{-1})$. Furthermore, observe that the $\fp$-module $\lambdafp(z_{-1})$ has a unique ring structure on it. 

Therefore to prove the first part of Theorem \ref{thm dsa in chains has unique coalg structure}, it is sufficient to show that there is a unique $\hfp$-algebra with homotopy ring $\lambdafp(z_{-1})$. This is provided by Proposition \ref{prop dga is formal}. 
Indeed, this shows that $\hfp \wdgz \hfp$ is the dual of the formal $\fp$-DGA with homology $\lambdafp(z_{-1})$. Since this formal $\fp$-DGA is commutative, we deduce that $\hfp \wdgz \hfp$ is the dual of a commutative $\hfp$-algebra. Together with Theorem \ref{thm equivalence of connective coalgebras and coconnective coalgebras in chains}, this shows that $\hfp \wdgz \hfp$ is a cocommutative $\hfp$-coalgebra.
\end{proof}

\subsection{CoHochschild Homology of the Steenrod Algebra Spectrum} \label{sec cothh of dual steenrod algebra} \label{sec cothh of steenrod algebra spectrum}

In this section, we apply Theorem \ref{thm cothh duality in chains} to  compute the coHochschild homology of the Steenrod algebra spectrum. Recall that
\[\pi_*(\hfp \wdg \hfp)\cong \mathcal{A}_*\]
is the dual Steenrod algebra. Furthermore, we have:  
\[{[\hfp \wdg \hfp, \hfp]}_{\hfp} \simeq {[\hfp,{[\hfp,\hfp]}_{\sph}]}_{\hfp} \simeq {[\hfp,\hfp]}_{\sph},\]
where $[-,-]_R$ denotes the internal hom in $R$-modules for a given commutative ring spectrum $R$. Recall that ${[\hfp,\hfp]}_{\sph}$ is the spectrum of cohomology operations on the cohomology theory defined by $\hfp$. Indeed: 
\[\mathcal{A}=\pi_*\left({[\hfp,\hfp]}_{\sph}\right)\]
is the Steenrod algebra. 
There is an $\hfp$-coalgebra structure on ${[\hfp,\hfp]}_{\sph}$ given by Theorem \ref{thm equivalence of connective coalgebras and coconnective coalgebras in chains} and the fact that it is the dual of the $\hfp$-algebra $\hfp \wdg \hfp$. This induces the usual comultiplication on the Steenrod algebra \cite[Section \RomanNumeralCaps{2}.10.2]{schwede-book}. 
We use Theorem \ref{thm cothh duality in chains} to compute the coHochschild homology of ${[\hfp,\hfp]}_{\sph}$ with this $\hfp$-coalgebra structure. Let $(-)^\vee$ denote the linear dual functor in $\hfp$-modules. There are equivalences of $\hfp$-algebras
\[\left({[\hfp,\hfp]}_{\sph}\right)^\vee \simeq (\hfp \wdg \hfp)^{\vee\vee}\simeq \hfp \wdg \hfp\]
where the first equivalence follows by our definition of the $\hfp$-coalgebra structure on ${[\hfp,\hfp]}_{\sph}$ and the second equality follows by Theorem \ref{thm equivalence of connective coalgebras and coconnective coalgebras in chains}. 
Using Theorem \ref{thm cothh duality in chains}, we obtain: 
 \[\cothh^{\hfp}_*\left({[\hfp,\hfp]}_{\sph}\right) \cong \left(\thh^{\hfp}_*(\hfp \wdg \hfp)\right)^\vee,\]
 where $(-)^\vee$ here denotes the linear dual functor in graded $\fp$-modules.
 To compute the right hand side of this equality, we use the following equalities
 \[\thh^{\hfp}(\hfp \wdg \hfp)\simeq \hfp \wdg \thh(\hfp) \simeq (\hfp \wdg \hfp) \wdg_{\hfp} \thh(\hfp)\]
 and the B\"okstedt periodicity \cite{bokstedt1985topological} \[\thh_*(\hfp) = \fp[x_2].\]
This shows that:
\[\cothh^{\hfp}_*\left({[\hfp,\hfp]}_{\sph}\right) \cong (\mathcal{A}_* \ot \fp[x_2])^\vee,\]
where $(-)^\vee$ on the right hand side denotes the dualization functor in graded $\fp$-modules. Note that $(M^\vee)_i = (M_{-i})^\vee$ for every graded $\fp$-module $M$. The right hand side of the isomorphism above is a finite dimensional $\fp$-module at each degree and $\fp$-dualization is symmetric monoidal on finite dimensional $\fp$-vector spaces, see Remark \ref{rem dualizable in k modules are finite dimensional}. We therefore have the following isomorphisms of graded $\fp$-modules:
\[ (\mathcal{A}_* \ot \fp[x_2])^\vee\cong (\mathcal{A}_*)^\vee \otimes \fp[x_2]^\vee\cong \mathcal{A} \otimes \fp[x_{-2}],\]
where $\lv x_{-2}\rv = -2$. We obtain the following result.

\begin{thm}\label{thm: cothh of steenrod algebra}
There is an equivalence of graded $\fp$-modules:
\[\cothh^{\hfp}_*\left({[\hfp,\hfp]}_{\sph}\right) \cong \mathcal{A} \otimes \fp[x_{-2}],\]
where $\mathcal{A}$ denotes the Steenrod algebra and $\lv x_{-2}\rv = -2$.
\end{thm}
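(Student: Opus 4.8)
The plan is to reduce the computation to a topological Hochschild homology calculation via the duality of Theorem \ref{thm cothh duality in chains}, and then to evaluate the resulting THH using B\"okstedt periodicity. The whole argument is an assembly of the structural results already in hand, so the work lies in checking that the hypotheses line up.

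First I would pin down the $\hfp$-coalgebra structure on ${[\hfp,\hfp]}_{\sph}$ and identify its linear dual. The dual Steenrod algebra $\hfp\wdg\hfp$ is a connective $\hfp$-algebra whose homotopy $\mathcal{A}_*$ is a degreewise finite-dimensional (hence free) graded $\fp$-module; applying Theorem \ref{thm equivalence of connective coalgebras and coconnective coalgebras in chains} with $\O=\assoc$ exhibits its linear dual ${[\hfp,\hfp]}_{\sph}\simeq(\hfp\wdg\hfp)^\vee$ as a \emph{coconnective} finite-type $\hfp$-coalgebra, with $\pi_*=\mathcal{A}$ concentrated in non-positive degrees, and identifies the double dual $\left({[\hfp,\hfp]}_{\sph}\right)^\vee\simeq\hfp\wdg\hfp$. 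Since $A=\fp$ is a field, it is Noetherian of global dimension $0$, so ${[\hfp,\hfp]}_{\sph}$ satisfies the coconnective finite-type hypotheses of Theorem \ref{thm cothh duality in chains}, which then yields
\[
\cothh^{\hfp}\left({[\hfp,\hfp]}_{\sph}\right)\simeq\left(\thh^{\hfp}\left(\hfp\wdg\hfp\right)\right)^\vee.
\]

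Next I would compute $\thh^{\hfp}(\hfp\wdg\hfp)$. Using base change for THH together with the identification $\thh^{\hfp}(\hfp\wdg\hfp)\simeq(\hfp\wdg\hfp)\wdg_{\hfp}\thh(\hfp)$, the input is B\"okstedt periodicity $\thh_*(\hfp)\cong\fp[x_2]$ with $\lv x_2\rv=2$; as $\fp$ is a field the K\"unneth theorem in $\hfp$-modules collapses and gives $\thh^{\hfp}_*(\hfp\wdg\hfp)\cong\mathcal{A}_*\otimes\fp[x_2]$. Finally I would dualize: this graded $\fp$-module is finite dimensional in each degree, so by Remark \ref{rem dualizable in k modules are finite dimensional} the $\fp$-linear dual is symmetric monoidal on it and intertwines the tensor factors, giving $(\mathcal{A}_*\otimes\fp[x_2])^\vee\cong(\mathcal{A}_*)^\vee\otimes\fp[x_2]^\vee\cong\mathcal{A}\otimes\fp[x_{-2}]$ with $\lv x_{-2}\rv=-2$, which is the claim. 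The main thing to get right---rather than a genuine obstacle---is the bookkeeping of (co)connectivity and finiteness: coconnectivity of ${[\hfp,\hfp]}_{\sph}$ is what lets Theorem \ref{thm cothh duality in chains} apply, while the degreewise finite-dimensionality of $\mathcal{A}_*\otimes\fp[x_2]$ is precisely what makes the final dualization monoidal rather than cardinality-increasing, as cautioned in Examples \ref{ex cond 2 doesnt imply cond 1} and \ref{ex counter to cond 1}.
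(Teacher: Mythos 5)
Your proposal is correct and follows essentially the same route as the paper: the coalgebra structure on ${[\hfp,\hfp]}_{\sph}$ is obtained as the dual of the $\hfp$-algebra $\hfp\wdg\hfp$ via Theorem \ref{thm equivalence of connective coalgebras and coconnective coalgebras in chains}, the duality of Theorem \ref{thm cothh duality in chains} reduces the claim to $\thh^{\hfp}(\hfp\wdg\hfp)\simeq(\hfp\wdg\hfp)\wdg_{\hfp}\thh(\hfp)$ and B\"okstedt periodicity, and the final degreewise-finite dualization is handled exactly as in the paper. The verification that the coconnective finite-type hypotheses hold, and that the last dualization is monoidal because everything is degreewise finite dimensional, matches the paper's argument.
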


\subsection{An Interesting Coalgebra in \texorpdfstring{$\hz$}{TEXT}-modules} \label{sec loop fp is a coalgebra}

Recall that dualizable $\hz$-modules are precisely those that can be obtained from $\hz$ via finitely many cofiber sequences and retracts \cite[3.2.3]{lurie2}. Therefore, the $H\z$-algebra $H\fp$ is dualizable as an $H\z$-module. This is due to the cofiber sequence
\[H\z \xrightarrow{\cdot p}H\z \to H\fp.\]
Thus $H\fp^\vee$, the dual of $\hfp$ in $\hz$-modules, is an $H\z$-coalgebra, see Corollary \ref{cor: antiequivalence proper (co)algebras}. Applying the  dualization functor to the sequence above, one obtains a fiber sequence
\[\Sigma (\hfp^\vee) \leftarrow H\z \xleftarrow{\cdot p} H\z \xleftarrow{} \hfp^\vee, \]
which shows that $\Sigma (\hfp^\vee) = \hfp$. Therefore we have an equivalence 
\[H\fp^\vee \simeq \Omega H\fp,\]
of $H\z$-modules. We consider $\Omega \hfp$ as an $\hz$-coalgebra through this equivalence.

\begin{thm}\label{thm: loop-hfp}
With the $H\z$-coalgebra structure on $\Omega\hfp$ described above, the coHochschild homology of $\Omega\hfp$ is given by:
\[\cothh_*^{H\z}(\Omega H\fp)\cong \Omega \fp[x_{-2}],\]
as a graded $\fp$-module where $\lv x_{-2} \rv= -2$ and $\Omega$ on the right hand side denotes the functor that decreases the grading by 1.
\end{thm}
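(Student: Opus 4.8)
The plan is to reduce the computation to a relative topological Hochschild homology calculation via the duality theorem, and then to dualize. First I would observe that $\Omega\hfp \simeq \hfp^\vee$ is itself dualizable in $\Mod_{H\z}(\sp)$: it is the linear dual of the dualizable module $\hfp$, and duals of dualizable objects are dualizable by Proposition \ref{prop: antiequivalence between dualizable objects}. Moreover its dual is $(\Omega\hfp)^\vee \simeq \hfp^{\vee\vee} \simeq \hfp$, since dualizable objects satisfy Condition \ref{cond 2}. In particular $\Omega\hfp$ is a coconnective finite type dualizable $H\z$-coalgebra, so Theorem \ref{thm cothh duality general} (proved as Theorem \ref{thm most general thh cothh duality}\ref{item: main thm coalgebra part}) applies and gives
\[
\cothh^{H\z}(\Omega\hfp) \simeq \Big(\thh^{H\z}\big((\Omega\hfp)^\vee\big)\Big)^\vee \simeq \Big(\thh^{H\z}(\hfp)\Big)^\vee.
\]

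Next I would compute $\thh^{H\z}(\hfp)$. Writing $\thh^{H\z}(\hfp) \simeq \hfp \wedge_{\hfp \wdgz \hfp} \hfp$, I would run the K\"unneth (Tor) spectral sequence of \cite[\RomanNumeralCaps{4}.4.1]{elmendorf2007rings},
\[
E^2_{*,*} = \Tor^{\pi_*(\hfp \wdgz \hfp)}_{*,*}(\fp, \fp) \Longrightarrow \thh^{H\z}_*(\hfp).
\]
By the Tor calculation recalled in the previous subsection, $\pi_*(\hfp \wdgz \hfp) \cong \lambdafp(y_1)$ with $|y_1| = 1$, and both copies of $\fp$ are modules via the augmentation. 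Hence $E^2 \cong \Tor^{\lambdafp(y_1)}(\fp,\fp)$ is a divided power algebra on a single class $\sigma y_1$ of bidegree $(1,1)$, which as a graded $\fp$-module is concentrated in even total degrees $0,2,4,\dots$. Since every class lies in even total degree while the differentials lower total degree by one, the spectral sequence collapses with no extension problems at the level of graded $\fp$-modules, giving $\thh^{H\z}_*(\hfp) \cong \fp[x_2]$ with $|x_2| = 2$; this is the relative B\"okstedt periodicity.

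Finally I would dualize. Since $\hfp$ is an $\ei$-$H\z$-algebra, $\thh^{H\z}(\hfp)$ is naturally an $\hfp$-module, so I would compare the $H\z$-linear dual with the $\hfp$-linear dual via coextension of scalars: for any $\hfp$-module $N$,
\[
[N, H\z]_{H\z} \simeq \big[N, [\hfp, H\z]_{H\z}\big]_{\hfp} \simeq [N, \Sigma^{-1}\hfp]_{\hfp} \simeq \Sigma^{-1}[N,\hfp]_{\hfp}.
\]
Here the middle equivalence uses that $[\hfp, H\z]_{H\z} = \hfp^\vee$ is an $\hfp$-module with homotopy concentrated in degree $-1$, hence equivalent to $\Sigma^{-1}\hfp$ as $\hfp$-modules. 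Because $\hfp$-linear dualization is graded $\fp$-module dualization on homotopy and $\fp[x_2]$ is of finite type, $\pi_*[N,\hfp]_{\hfp} \cong (\fp[x_2])^\vee \cong \fp[x_{-2}]$ with $|x_{-2}|=-2$. The extra $\Sigma^{-1}$ produces exactly the grading shift $\Omega$, yielding
\[
\cothh^{H\z}_*(\Omega\hfp) \cong \Omega\,\fp[x_{-2}],
\]
as claimed. The main obstacle is the relative THH computation of the middle step; the reduction and the final dualization are formal once one has the duality theorem. (Alternatively, the last step can be done directly with the universal coefficient spectral sequence over $\z$, where $\Ext^1_\z(\fp,\z)\cong\fp$ is what produces the generators in the odd negative degrees.)
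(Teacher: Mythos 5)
Your proposal is correct and follows the same overall strategy as the paper: identify $(\Omega\hfp)^\vee\simeq\hfp$ using dualizability of $\hfp$ in $\hz$-modules, reduce to $\big(\thh^{\hz}(\hfp)\big)^\vee$ via the duality theorem, and compute $\thh^{\hz}_*(\hfp)\cong\fp[x_2]$ from the K\"unneth spectral sequence with $E^2\cong\Tor^{\lambdafp(y_1)}(\fp,\fp)\cong\Gamma_{\fp}(\sigma y_1)$ concentrated in even total degree. The only genuine divergence is in the final dualization: the paper uses that $\hfp$-modules (hence generalized Eilenberg-Mac Lane $\hz$-modules) are determined by their homotopy to split $\thh^{\hz}(\hfp)\simeq\bigvee_{i\geq 0}\Sigma^{2i}\hfp$, and then dualizes the wedge term by term into a product of copies of $\Omega^{2i}\Omega\hfp$; you instead invoke the coextension-of-scalars adjunction $[N,\hz]_{\hz}\simeq[N,[\hfp,\hz]_{\hz}]_{\hfp}\simeq\Sigma^{-1}[N,\hfp]_{\hfp}$ for $\hfp$-modules $N$, which packages the same shift-by-one phenomenon (coming from $\Ext^1_{\z}(\fp,\z)\cong\fp$) into a single formal step and then reduces to $\hfp$-linear duality, where dualization is just graded $\fp$-vector-space duality on homotopy. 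Both arguments are valid and of comparable length; yours avoids the explicit wedge decomposition and the infinite product, at the cost of having to justify the adjunction and the identification $[\hfp,\hz]_{\hz}\simeq\Sigma^{-1}\hfp$ as $\hfp$-modules (which again uses that $\hfp$-modules are determined by their homotopy, so the two proofs ultimately rest on the same fact).
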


\begin{proof}
We compute the coHochschild homology of $\Omega H\fp$ using Theorem \ref{thm cothh duality in chains}. Since $\hfp$ is dualizable in $\hz$-modules, we have the following equivalences of $\hz$-algebras:
 \[(\Omega \hfp)^\vee \simeq \hfp^{\vee \vee} \simeq \hfp,\]
 where the first equivalence follows by our definition of $\Omega \hfp$ as an $\hz$-coalgebra and the second equivalence follows by Corollary \ref{cor: antiequivalence proper (co)algebras}. Using Theorem \ref{thm cothh duality in chains}, we obtain:
\[\cothh_*^{H\z}(\Omega H\fp)= \pi_*( (\thh^{H\z}(H\fp))^\vee).\]
There is an equivalence of graded $\fp$-modules
\[\pi_*(\thh^{H\z}(H\fp))\cong \Gamma_{\fp}(x_2)\cong \fp[x_2].\]
Here: $\Gamma_{\fp}(x_2)$ denotes the divided power algebra over $\fp$ on a single generator where $\lv x_2\rv=2$. Hochschild homology groups above are obtained via the K\"unneth spectral sequence calculating
\[\pi_*(H\fp \wdg_{H\fp \wdgz \hfp}\hfp),\]
whose $E^2$ page, as a bigraded $\fp$-module, is given by: 
\[\Tor_{*,*}^{\Lambda_{\fp}(y_1)}(\fp,\fp) \cong \Gamma_{\fp}(x_{1,1})\cong \fp[x_{1,1}].\]
Since equivalence type of Eilenberg-Mac Lane spectra are uniquely determined by their homotopy groups, we have an equivalence of $\hz$-modules:
\[\thh^{\hz}(\hfp) \simeq \bigvee_{i= 0}^\infty \Sigma^{2i} \hfp.\]
Therefore:
\begin{equation*}
\begin{split}
    \thh^{\hz}(\hfp)^\vee \simeq& [\bigvee_{i= 0}^\infty  \Sigma^{2i} \hfp,\hz]   \simeq  \prod_{i=0}^\infty \Omega^{2i}[\hfp,\hz] \\
     \simeq & \prod_{i=0}^\infty \Omega^{2i}\hfp^\vee \simeq \prod_{i=0}^\infty \Omega^{2i} \Omega \hfp.
\end{split}
\end{equation*}
We obtain: 
\[\cothh_*^{H\z}(\Omega H\fp)\cong \Omega \fp[x_{-2}],\]
where $\lv x_{-2} \rv= -2$ and $\Omega$ denotes the functor that decreases the grading by 1.
\end{proof}

\renewcommand{\bibname}{References}
\bibliographystyle{amsalpha}
\bibliography{biblio}

\newcommand{\etalchar}[1]{$^{#1}$}
\providecommand{\bysame}{\leavevmode\hbox to3em{\hrulefill}\thinspace}
\providecommand{\MR}{\relax\ifhmode\unskip\space\fi MR }
\providecommand{\MRhref}[2]{%
  \href{http://www.ams.org/mathscinet-getitem?mr=#1}{#2}
}
\providecommand{\href}[2]{#2}
\begin{thebibliography}{EKMM97}

\bibitem[AF21]{zeromani}
David Ayala and John Francis, \emph{Zero-pointed manifolds}, J. Inst. Math.
  Jussieu \textbf{20} (2021), no.~3, 785--858. \MR{4260642}

\bibitem[Ati61]{atiyah}
M.~F. Atiyah, \emph{Thom complexes}, Proc. London Math. Soc. (3) \textbf{11}
  (1961), 291--310. \MR{131880}

\bibitem[Bay21]{bayindir2019dgaswithpolynomial}
Haldun~\"{O}zg\"{u}r Bay{\i}nd{\i}r, \emph{D{GA}s with polynomial homology},
  Adv. Math. \textbf{389} (2021), Paper No. 107907, 58. \MR{4289043}

\bibitem[BGH{\etalchar{+}}18]{toolscothh}
Anna~Marie Bohmann, Teena Gerhardt, Amalie H{\o}genhaven, Brooke Shipley, and
  Stephanie Ziegenhagen, \emph{Computational tools for topological
  co{H}ochschild homology}, Topology Appl. \textbf{235} (2018), 185--213.
  \MR{3760202}

\bibitem[BGN18]{dualcocart}
Clark Barwick, Saul Glasman, and Denis Nardin, \emph{Dualizing cartesian and
  cocartesian fibrations}, Theory Appl. Categ. \textbf{33} (2018), Paper No. 4,
  67--94. \MR{3746613}

\bibitem[BGS22]{bohmann2022topological}
Anna~Marie Bohmann, Teena Gerhardt, and Brooke Shipley, \emph{Topological
  co{H}ochschild homology and the homology of free loop spaces}, Math. Z.
  \textbf{301} (2022), no.~1, 411--454. \MR{4405656}

\bibitem[BMS19]{bms}
Bhargav Bhatt, Matthew Morrow, and Peter Scholze, \emph{Topological
  {H}ochschild homology and integral {$p$}-adic {H}odge theory}, Publ. Math.
  Inst. Hautes \'{E}tudes Sci. \textbf{129} (2019), 199--310. \MR{3949030}

\bibitem[B{\"o}k87]{bokstedt1985topological}
Marcel B{\"o}kstedt, \emph{{T}he topological {H}ochschild homology of $\z$ and
  $\z/p$}, Preprint, 1987.

\bibitem[CHV06]{stringtop}
Ralph~L. Cohen, Kathryn Hess, and Alexander~A. Voronov, \emph{String topology
  and cyclic homology}, Advanced Courses in Mathematics. CRM Barcelona,
  Birkh\"{a}user Verlag, Basel, 2006, Lectures from the Summer School held in
  Almer\'{\i}a, September 16--20, 2003. \MR{2251006}

\bibitem[Coh04]{ralph}
Ralph~L. Cohen, \emph{Multiplicative properties of {A}tiyah duality}, Homology
  Homotopy Appl. \textbf{6} (2004), no.~1, 269--281. \MR{2076004}

\bibitem[Doi81]{doi}
Yukio Doi, \emph{Homological coalgebra}, J. Math. Soc. Japan \textbf{33}
  (1981), no.~1, 31--50. \MR{597479}

\bibitem[EKMM97]{elmendorf2007rings}
A.~D. Elmendorf, I.~Kriz, M.~A. Mandell, and J.~P. May, \emph{Rings, modules,
  and algebras in stable homotopy theory}, Mathematical Surveys and Monographs,
  vol.~47, American Mathematical Society, Providence, RI, 1997, With an
  appendix by M. Cole. \MR{1417719}

\bibitem[GKR20]{left3}
Richard {Garner}, Magdalena {K\c{e}dziorek}, and Emily {Riehl}, \emph{Lifting
  accessible model structures}, J. Topol. \textbf{13} (2020), no.~1, 59--76.
  \MR{3999672}

\bibitem[{Hau}20]{runelax}
Rune {Haugseng}, \emph{{A fibrational mate correspondence for
  $\infty$-categories}}, arXiv e-prints (2020), arXiv:2011.08808.

\bibitem[Hin15]{hinrect}
Vladimir Hinich, \emph{Rectification of algebras and modules}, Doc. Math.
  \textbf{20} (2015), 879--926. \MR{3404213}

\bibitem[HKRS17]{left2}
Kathryn Hess, Magdalena K\c{e}dziorek, Emily Riehl, and Brooke Shipley, \emph{A
  necessary and sufficient condition for induced model structures}, J. Topol.
  \textbf{10} (2017), no.~2, 324--369. \MR{3653314}

\bibitem[Hov99]{hovey}
Mark Hovey, \emph{Model categories}, Mathematical Surveys and Monographs,
  vol.~63, American Mathematical Society, Providence, RI, 1999. \MR{1650134}

\bibitem[HPS09]{cohh}
Kathryn Hess, Paul-Eug\`ene Parent, and Jonathan Scott, \emph{Co{H}ochschild
  homology of chain coalgebras}, J. Pure Appl. Algebra \textbf{213} (2009),
  no.~4, 536--556. \MR{2483836}

\bibitem[HS16]{comodHS}
Kathryn Hess and Brooke Shipley, \emph{Waldhausen {$K$}-theory of spaces via
  comodules}, Adv. Math. \textbf{290} (2016), 1079--1137. \MR{3451948}

\bibitem[HS21]{HScothh}
\bysame, \emph{Invariance properties of co{H}ochschild homology}, J. Pure Appl.
  Algebra \textbf{225} (2021), no.~2, 106505, 27. \MR{4128640}

\bibitem[JN14]{johnson2014lifting}
Niles Johnson and Justin Noel, \emph{Lifting homotopy {T-algebra} maps to
  strict maps}, Advances in Mathematics \textbf{264} (2014), 593--645.

\bibitem[{Kee}20]{liam}
Liam {Keenan}, \emph{{The May filtration on THH and faithfully flat descent}},
  arXiv e-prints (2020), arXiv:2004.12600.

\bibitem[Kla22]{sarah}
Sarah Klanderman, \emph{Computations of relative topological co{H}ochschild
  homology}, J. Homotopy Relat. Struct. \textbf{17} (2022), no.~3, 393--417.
  \MR{4470385}

\bibitem[KP]{tracecohh}
Sarah Klanderman and Maximilien P{\'e}roux, \emph{Trace methods for
  co{H}ochschild homology}, in preparation.

\bibitem[Kuh04]{kuhn}
Nicholas~J. Kuhn, \emph{The {M}c{C}ord model for the tensor product of a space
  and a commutative ring spectrum}, Categorical decomposition techniques in
  algebraic topology ({I}sle of {S}kye, 2001), Progr. Math., vol. 215,
  Birkh\"{a}user, Basel, 2004, pp.~213--236. \MR{2039768}

\bibitem[KY97]{kuribayashi1997cohomologyoffreeloopspaces}
Katsuhiko Kuribayashi and Toshihiro Yamaguchi, \emph{The cohomology algebra of
  certain free loop spaces}, Fund. Math. \textbf{154} (1997), no.~1, 57--73.
  \MR{1472851}

\bibitem[Lin76]{lin1976dualityandeilenbergmaclane}
T.~Y. Lin, \emph{Duality and {E}ilenberg-{M}ac {L}ane spectra}, Proc. Amer.
  Math. Soc. \textbf{56} (1976), 291--299. \MR{402738}

\bibitem[Lod98]{loday1998cyclichomology}
Jean-Louis Loday, \emph{Cyclic homology}, second ed., Grundlehren der
  mathematischen Wissenschaften [Fundamental Principles of Mathematical
  Sciences], vol. 301, Springer-Verlag, Berlin, 1998, Appendix E by Mar\'{\i}a
  O. Ronco, Chapter 13 by the author in collaboration with Teimuraz Pirashvili.
  \MR{1600246}

\bibitem[Lur09]{htt}
Jacob Lurie, \emph{Higher topos theory}, Annals of Mathematics Studies, vol.
  170, Princeton University Press, Princeton, NJ, 2009. \MR{2522659}

\bibitem[Lur17]{HA}
Jacob Lurie, \emph{Higher algebra},
  \url{https://www.math.ias.edu/~lurie/papers/HA.pdf}, 2017, electronic book.

\bibitem[Lur18]{lurie2}
\bysame, \emph{Elliptic cohomology {I}},
  \url{http://www.math.harvard.edu/~lurie/papers/Elliptic-I.pdf}, 2018,
  unpublished.

\bibitem[Mal17]{cary}
Cary Malkiewich, \emph{Cyclotomic structure in the topological {H}ochschild
  homology of {$DX$}}, Algebr. Geom. Topol. \textbf{17} (2017), no.~4,
  2307--2356. \MR{3686399}

\bibitem[NS18]{tch}
Thomas Nikolaus and Peter Scholze, \emph{On topological cyclic homology}, Acta
  Math. \textbf{221} (2018), no.~2, 203--409. \MR{3904731}

\bibitem[P{\'e}r22a]{coalgenr}
Maximilien P{\'e}roux, \emph{The coalgebraic enrichment of algebras in higher
  categories}, J. Pure Appl. Algebra \textbf{226} (2022), no.~3, Paper No.
  106849, 11. \MR{4291529}

\bibitem[P{\'e}r22b]{perouxDKloc}
\bysame, \emph{Coalgebras in the {D}wyer-{K}an localization of a model
  category}, Proc. Amer. Math. Soc. \textbf{150} (2022), no.~10, 4173--4190.
  \MR{4470166}

\bibitem[Pon10]{ponto2008fixed}
Kate Ponto, \emph{Fixed point theory and trace for bicategories},
  Ast\'{e}risque (2010), xii+102. \MR{2741967}

\bibitem[PS19]{perouxshipley}
Maximilien P\'{e}roux and Brooke Shipley, \emph{Coalgebras in symmetric
  monoidal categories of spectra}, Homology Homotopy Appl. \textbf{21} (2019),
  no.~1, 1--18. \MR{3852287}

\bibitem[Rez98]{rezk1998notes}
Charles Rezk, \emph{Notes on the {H}opkins-{M}iller theorem}, Homotopy theory
  via algebraic geometry and group representations ({E}vanston, {IL}, 1997),
  Contemp. Math., vol. 220, Amer. Math. Soc., Providence, RI, 1998,
  pp.~313--366. \MR{1642902}

\bibitem[RWZ20]{rivera}
Manuel {Rivera}, Felix {Wierstra}, and Mahmoud {Zeinalian}, \emph{{The
  simplicial coalgebra of chains determines homotopy types rationally and one
  prime at a time}}, arXiv e-prints (2020), arXiv:2006.05362.

\bibitem[Sch12]{schwede-book}
Stefan Schwede, \emph{Symmetric spectra},
  \url{http://www.math.uni-bonn.de/people/schwede/SymSpec-v3.pdf}, 2012,
  electronic book.

\bibitem[Swe69]{sweedler}
Moss~E. Sweedler, \emph{Hopf algebras}, Mathematics Lecture Note Series, W. A.
  Benjamin, Inc., New York, 1969. \MR{0252485}

\end{thebibliography}
\end{document}